\newcommand\myuline{\bgroup\markoverwith
	{\textcolor{gray}{\rule[0.4ex]{2pt}{3pt}}}\ULon}
\newcommand{\bea}{\begin{eqnarray}}
	\newcommand{\eea}{\end{eqnarray}}
\def\beaa{\begin{eqnarray*}}
	\def\eeaa{\end{eqnarray*}}
\def\ba{\begin{array}}
	\def\ea{\end{array}}
\def\be#1{\begin{equation} \label{#1}}
	\def\eeq{\end{equation}}
\def\lV{\lVert}
\def\rV{\rVert}
\def\div{\mathrm{div}}
\def\d{\partial}
\newcommand{\tr}{\operatorname{tr}}
\newcommand{\Lf}{L^{\infty}}
\def\be{{\beta}}
\def\ga{\gamma}
\def\Ga{\Gamma}
\def\de{\delta}
\def\De{\Delta}
\def\ep{\epsilon}
\def\la{\lambda}
\def\si{\sigma}
\def\om{\omega}
\def\Om{\Omega}
\def\th{\theta}
\def\nab{\nabla}
\def\al{\alpha}
\def\les{\lesssim}
\def\AA{{\mathcal A}}
\def\BB{{\mathcal B}}
\def\EE{{\mathcal E}}
\def\KK{{\mathcal K}}
\def\DD{{\mathcal D}}
\def\AA{{\mathcal A}}
\def\XX{{\mathcal X}}
\def\R{{\mathbb{R}}}
\def\S{{\mathbb{S}}}
\def\Z{{\mathbb{Z}}}
\def\P{{\mathbb P}}
\newcommand{\<}{  \langle   }
\renewcommand{\>}{  \rangle   }
\newtheorem{theorem}{Theorem}[section]
\newtheorem{lemma}[theorem]{Lemma}
\newtheorem{proposition}[theorem]{Proposition}
\newtheorem{definition}[theorem]{Definition}
\newtheorem{remark}[theorem]{Remark}
\numberwithin{equation}{section}
\begin{document}
	
	
\title[Almost global well-posedness for hyperbolic liquid crystal]
{Almost global well-posedness of 2-D Ericksen-Leslie's hyperbolic liquid crystal model for small data}
\author[J. Huang]{Jiaxi Huang$^1$}

\author[N. Jiang]{Ning Jiang$^{2}$}

\author[L. Zhao]{Lifeng Zhao$^3$}

\address{$^1$ School of Mathematics and Statistics, Beijing Institute of Technology, 
	\newline\indent
	Beijing
	100081, P.R. China}
\email{\href{mailto:jiaxih@bit.edu.cn}{jiaxih@bit.edu.cn}}

\address{$^2$School of Mathemtical and Statistics, Wuhan University, 
	\newline\indent
	Wuhan 430072, P. R. China}
\email{\href{mailto:njiang@whu.edu.cn}{njiang@whu.edu.cn}}

\address{$^3$School of Mathemtical Sciences, University of Science and Technology of China, 
	\newline\indent
	Hefei 230026, P. R. China}
\email{\href{mailto:zhaolf@ustc.edu.cn}{zhaolf@ustc.edu.cn}}

\thanks{Mathematics Subject Classification 2020: 35M30, 35L52,76D03, 35B65}

\keywords{Wave map equations, liquid crystal, parabolic-hyperbolic, almost global well-posedness}

\begin{abstract}
This article is devoted to the two dimensional simplified Ericksen-Leslie's hyperbolic system for incompressible liquid crystal model, where the direction $d$ of liquid crystal molecules satisfies a wave map equation with an acoustical metric. 
We established the almost global well-posedness for small and smooth initial data near the constant equilibrium. Our proof relies on the idea of vector-field method and ghost weight method.
There are two key ingredients in our proof: (i) Inspired by the gauge theory in Tataru \cite{Tataru,Tataru05}, we reformulate the wave map equation into a free wave equation with acoustical metric, where the nonlinearity is annihilated due to the geometry of $\S^1$; (ii) Motivated by the ghost weight method in Alinhac \cite{A01}, we introduce a new and important ``good unknown", the velocity $u$, which provides the additional dissipation $u/\<t-r\>\in L^2_tL^2_x$.
These new observations turn out to be extremely 
crucial in resolving the system in low dimensions.  
\end{abstract}

\date{\today}
\maketitle

\setcounter{tocdepth}{2}

\section{Introduction}
Hyperbolic liquid crystal system was introduced by Ericksen  and Leslie in their pioneering works \cite{Ericksen-1961-TSR} and \cite{Leslie-1968-ARMA} respectively during the early days of mathematical hydrodynamic theory in the 1960's, see also their other related works \cite{Ericksen-1961-TSR, Ericksen-1987-RM, Ericksen-1990-ARMA, Leslie-1968-ARMA, Leslie-1979} and the survey article by Lin and Liu \cite{Lin-Liu-2001}. The general Ericksen-Leslie's system in dimensions $n=2,\ 3$ consists of the following equations of the velocity field $u(x,t)\in \mathbb{R}^n$ and the orientation field $d(x,t)\in \mathbb{S}^{n-1}$, and $(x,t)\in \R^n\times \R^+$:
\begin{equation}\label{PHLC}
\begin{aligned}
\left\{ \begin{array}{c}
\partial_t u + u \cdot \nabla u  + \nabla p = - \div (\nabla d \odot \nabla d) + \div \sigma\,, \\
\div u = 0\, ,\\
\rho_1 \ddot{d} = \Delta d + \Gamma d + \lambda_1 (\dot{d} + B d) + \lambda_2 A d\,.
\end{array}\right.
\end{aligned}	
\end{equation}
For the detailed derivation of \eqref{PHLC} from the original form of Ericksen-Leslie's formulation, see, for example \cite{Jiang-Luo-2018}.

In the above system, $\rho_1 > 0$ is the inertial constant, and the superposed dot denotes the material derivative $\partial_t + u \cdot\nabla$, and
\begin{equation*}
\begin{aligned}
A = \tfrac{1}{2}(\nabla u + \nabla^T u)\,,\quad B= \tfrac{1}{2}(\nabla u - \nabla^T u)\,,\end{aligned}
\end{equation*}
represent the rate of strain tensor and skew-symmetric part of the strain rate, respectively. We also define $N = \dot d + B d$ as the rigid rotation part of director changing rate by fluid vorticity. Here $A_{ij} = \tfrac{1}{2} (\partial_j u_i + \partial_i u_j)$, $ B_{ij} = \tfrac{1}{2} (\partial_j u_i - \partial_i u_j) $, $(B d)_i =B_{ki} d_k$, and $(\nabla d \odot \nabla d)_{ij} = \partial_i d_k \partial_j d_k$. The stress tensor $\sigma$ has the following form:
\begin{equation}\label{Extra-Sress-sigma}
\begin{aligned}
\sigma_{ji}=  \nu_1 d_k A_{kp}d_p  d_i d_j + \nu_2  d_j N_i  + \nu_3 d_i N_j  + \nu_4 A_{ij} + \nu_5 A_{ik}d_k d_j   + \nu_6 d_i A_{jk}d_k \,.
\end{aligned}
\end{equation}
These coefficients $\nu_i (1 \leq i \leq 6)$ which may depend on material and temperature, are usually called Leslie coefficients, and are related to certain local correlations in the fluid. Usually, the following relations are frequently introduced in the literatures \cite{Ericksen-1961-TSR,Leslie-1968-ARMA, Wu-Xu-Liu-ARMA2013}.
\begin{equation*}
\lambda_1=\nu_2-\nu_3\,, \quad\lambda_2 = \nu_5-\nu_6\,,\quad \nu_2+\nu_3 = \nu_6-\nu_5\,.
\end{equation*}
The first two relations are necessary conditions in order to satisfy the equation of motion identically, while the third relation is called {\em Parodi's relation}, which is derived from Onsager reciprocal relations expressing the equality of certain relations between flows and forces in thermodynamic systems out of equilibrium. Under Parodi's relation, we see that the dynamics of an incompressible nematic liquid crystal flow involve five independent Leslie coefficients in \eqref{Extra-Sress-sigma}. Furthermore, in \eqref{PHLC}, the Lagrangian multiplier $\Gamma$ is (which ensures the geometric constraint $|d|=1$):
\begin{equation*}
\Gamma = - \rho_1 |\dot{d}|^2 + |\nabla d|^2 - \lambda_2 d^T A d\, .
\end{equation*}

Compared with the extensively studied  parabolic counterpart (see \cite{LW-survey}), the hyperbolic liquid crystal system \eqref{PHLC} has a prominent feature that the direction $d$ satisfies a wave map type equation. When $u=0$, it becomes the classical wave map,  which has been extensively studied in the last decades, here we recall the non-exhaustive lists of results. 
Local well-posedness in $H^s\times H^{s-1}$ was established for all subcritical regularities $s>n/2$ by Klainerman-Machedon for $n\geq 3$ and Klainerman-Selberg when $n=2$ \cite{KM,KSe-1997}.
The much more delicate small data critical problem was started by Tataru \cite{Tataru} in the critical Besov space, and then
completed by Tao \cite{Tao1,Tao2} for wave maps from $\R^{1+n}$ to $\S^m$ in the critical Sobolev space. The dynamic behavior for wave maps from $\R^{1+2}$  to $\S^2$ was obtained by  Sterbenz-Tataru \cite{ST1,ST2} below the threshold. For 1-equivariant wave maps slightly above the threshold, Krieger-Schlag-Tataru \cite{KST-2008} contructed finite time blowup solutions with continuum blowup rate while stable smooth blowup solutions were contructed by Raphael-Rodnianski \cite{RR}.  Recently Duychaertz-Kenig-Martel-Merle \cite{DKMM} and Jrendrej-Lawrie \cite{JL} established the soliton resolution for $k$-equivariant ($k\geq1$) wave maps. For more complete review of the works for the wave map equations, please see the references therein above and \cite{KTV,GG-2017}.

However, the wave map equation satisfied by the direction $d$ becomes quasilinear due to the presence of acoustical metric. In fact, the $d$-equation in \eqref{PHLC} can be viewed as a wave map on a curved manifold $(\R^n,g)$ with the metric  $g=-dt\otimes dt+\sum_{i=1}^n (dx^i-u^idt)\otimes (dx^i-u^idt)$ depending on the velocity $u$ of fluid. Thus the $d$-equation is essentially a quasilinear wave map equation, which brings many  difficulties in studying the behaviors of hyperbolic liquid crystal \eqref{PHLC}. Similar phenomenon also appears in compressible Euler equations, see \cite{LS-2018,DIT,IT-2020}. Hence, one of the key issues is to exploit the structure of nontrivial metric, especially the second-order material derivative.

The study of  hyperbolic liquid crystals is still in its infancy. Recently, Jiang and Luo established in \cite{Jiang-Luo-2018} the well-posedness in the context of classical solutions for the hyperbolic case, i.e. $\rho_1 >0$. More precisely, in \cite{Jiang-Luo-2018} under some natural constraints on the Leslie coefficients which ensure the basic energy law is dissipative, they proved the local-in-time existence and uniqueness of the classical solution to the system \eqref{PHLC} with finite initial energy. Furthermore, with an additional assumption on the coefficients which provides a damping effect, i.e. $\lambda_1 < 0$, and the smallness of the initial energy, the unique global classical solution was established. Here we remark that the assumption $\lambda_1 < 0$ plays a crucial role in the global-in-time well-posedness. Cai-Wang \cite{CW} made progress for the simplied Ericksen-Leslie system, namely, the case with  $\nu_i = 0, i=1\,,\cdots\,,6$, $i\neq 4$. They proved the global regularity of \eqref{PHLC} near the constant equilibrium by employing the vector field method. Later, the general incompressible Ericksen-Leslies system without kinematic transport was considered by Huang-Jiang-Luo-Zhao in \cite{HJLZ1}. They proved the global regularity of \eqref{PHLC} near the equilibrium by space-time resonance method. This strategy was also used to study the simplified compressible Ericksen-Leslie system, which admits more ``wave" features. Huang-Jiang-Luo-Zhao \cite{HJLZ2} showed that the solution of simplified compressible Ericksen-Leslie’s system near the constant equilibrium is
globally well-posed and scattering. However, in two dimensions $n=2$, the global regularity and scattering of hyperbolic liquid crystal for small data is still open.

Finally, we mention some results on other liquid crystal models of hyperbolic type. In \eqref{PHLC}, taking $\sigma=0$ and $u=0$,the system \eqref{PHLC} can be reduced to a so-called nonlinear variational wave equation in dimension one. Zhang-Zheng  studied systematically the dissipative solution and the energy conservative solutions \cite{Zhang-Zheng-CPDE2001,Zhang-Zheng-ARMA2010}. The inertial Qian-Sheng model of liquid crystals is another interesting model which was investigated by De Anna and Zarnescu \cite{DeAnna-Zarnescu-2016}. They derived the energy law and proved the local well-posdedness for bounded initial data and global well-posedness for small data when the coefficients satisfy some further damping property. For the inviscid version of the Qian-Sheng model, Feireisl-Rocca-Schimperna-Zarnescu \cite{FRSZ-2016} proved the global existence of the {\em dissipative solution}. 

\

In the current paper, we consider the \emph{two dimensional} simplified Ericksen-Leslie's parabolic-hyperbolic liquid crystal model in the following form: 
\begin{equation}        \label{ori_sys}
\left\{
\begin{aligned}
&\partial_t u+u\cdot\nabla u+\nabla p  =\Delta u-\div(\nabla d \odot \nabla d) \,, \\
&\div u  =0 \,,\\
&\ddot{d}-\Delta d  =(-|\dot{d}|^2+|\nabla d|^2)d\,,
\end{aligned}
\right.
\end{equation}
on $\R^2\times \R^+$ with the constraint $d\in \S^1$. Here we should note that the direction $d$ of liquid crystal molecules evolves along the wave map flow under the Lorentzian metric $g$ corresponding to the propagation of
sound waves. Generally, the Lorentzian metric $g$ is called \emph{acoustical metric}, which is defined as below:
\begin{definition}[The acoustical metric and its inverse] 
We define the acoustical metric $g$ and the inverse acoustical metric $g^{-1}$ relative to the Cartesian coordinates as follows:
\begin{equation} \label{acMet}
    g:=-dt\otimes dt+\sum_{j=1}^n (dx_j-u_j dt)\otimes (dx_j-u_j dt)\,,
\end{equation}
\begin{equation*}
    g^{-1}:=-(\d_t+\sum_{j=1}^n u_j\d_j)\otimes (\d_t+\sum_{j=1}^n u_j\d_j)+\sum_{j=1}^n\d_j \otimes \d_j \,,
\end{equation*}
where $n$ is the spatial dimensions.
\end{definition}

Now we aim to prove the small data \emph{almost global well-posedness} for simplified Ericksen-Leslie's liquid crystal model near the constant equilibrium $(u,d)= (\vec{0},\vec{i})$ in low dimensions $n=2$. Namely, the solution of simplified (EL) equations with small initial data $\les \ep$ exists on the time interval $[0,e^{C_T/\ep}]$. Different from the case in three dimensions, the worse decays bring many issues in low dimensions. Here we have to pay more attention to investigating the structure of acoustical metric, especially the second-order material derivative, which resolves these difficulties.

Inspired by the gauge theory in Tataru \cite{Tataru,Tataru05}, we formulate the liquid crystal model as a perfect form first. Precisely, we represent $d\in \S^1$ in terms of the angle between unit vector $d$ and $x$-axis $\phi$, namely
\begin{equation*}     
	d=(\cos\phi,\ \sin\phi)\,.
\end{equation*}
Since the orientation $d$ is considered near a fixed unit vector, then without loss of generality, the angles $\phi$ can be chose near $0$. Then the system (\ref{ori_sys}) can be rewritten as the $(u,\phi)$-system:
\begin{equation}                   \label{Main_Sys}
	\left\{\begin{aligned}
		&\partial_t u-\De u    =-u\cdot\nabla u-\nab p-\div(\nab \phi\odot \nab \phi)\,, \\
		&\div u=0\,,\\
		&(\d_t+u\cdot\nab)^2 \phi-\Delta\phi=0\,.
	\end{aligned}
	\right.
\end{equation}
Here $\div(\nab \phi \odot \nab\phi)=\sum_{j=1}^2 \d_j(\nab\phi \d_j\phi)$. Note that the angle $\phi$ satisfies a free wave equation under the acoustical metric \eqref{acMet}. The above reformulation is the key step towards characterizing the long-time behaviors of the hyperbolic liquid crystal model in low dimensions.
The detailed derivation of \eqref{Main_Sys} will be given in Section 2.

Now we are in the position to state our main result. 
\begin{theorem}           \label{Main_thm}
	Let $N\geq 10$ and $(u_0,\phi_0,\phi_1)\in H^N_\Lambda$. Assume that the initial data $(u_0,\phi_0,\phi_1)$ is small,
	\begin{equation}\label{MainAss_dini}
		\|(u_0,\phi_0,\phi_1) \|_{H^N_\Lambda}\leq \ep \,,
	\end{equation}
	then there exists a unique solution $(u,\phi)\in H^{N}_\Ga$  of Ericksen-Leslie's system \eqref{Main_Sys} on the time interval $[0,e^{C_T/\ep}]$ which satisfies the following bounds for all $t\in[0,e^{C_T/\ep}]$: 
	
	(i) Energy bounds:
	\begin{equation*}
		E^{1/2}_N(t)\les \ep\,.
	\end{equation*}
    
    (ii) Decay estimates:
    \begin{align}      \label{decay-thm}
    	\|u(t)\|_{\Lf}\les \ep\<t\>^{-1/2}\,,\quad \|\nab_{t,x}\phi(t)\|_{\Lf}\les \ep\<t\>^{-1/2}\,.
    \end{align}
\end{theorem}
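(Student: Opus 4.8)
The plan is to run a bootstrap/continuity argument on a combined energy functional that captures both the parabolic regularity of the velocity $u$ and the wave-type energy of the angle $\phi$, using the vector fields $\Ga = \{\d_t, \d_x, \Om = x^\perp\cdot\nab, S = t\d_t + r\d_r\}$ (minus the scaling $S$ if it fails to commute cleanly with the heat operator, in which case one keeps only $\{\d, \Om\}$ and a Klainerman--Sobolev inequality adapted to this reduced set). I would fix a large constant $A$ and a time $T = e^{C_T/\ep}$, assume the a priori bound $E_N^{1/2}(t) \le A\ep$ for $t \in [0,T]$, and aim to close with $E_N^{1/2}(t) \le \tfrac12 A\ep$, which by local existence and continuity extends the solution to $[0,T]$. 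The heart of the matter is the energy identity: apply $Z^\a$ for $|\a| \le N$ to the two equations, pair $Z^\a\phi$-equation with $(\d_t + u\cdot\nab)Z^\a\phi$ using the acoustical metric structure, and pair the $Z^\a u$-equation with $Z^\a u$; this produces, on the left, $\tfrac{d}{dt}E_N + $ (from the heat part) $\|\nab Z^\a u\|_{L^2}^2$ plus, crucially, the ghost-weight dissipation terms. The ghost weight $e^{q(t-r)}$ with $q' = \jap{t-r}^{-1}$ or similar, inserted into both pairings, yields the positive spacetime terms $\|\,\cdot\,/\jap{t-r}\|_{L^2_t L^2_x}^2$ for the ``good derivatives'' of $\phi$ \emph{and}, per the abstract's second key ingredient, for $u$ itself --- this $u/\jap{t-r} \in L^2_t L^2_x$ gain is what compensates for the slow $\jap{t}^{-1/2}$ decay in $n=2$.

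Next I would estimate the error terms. On the right-hand side one gets: (a) commutator terms $[Z^\a, \text{heat}]u$ and $[Z^\a, (\d_t+u\cdot\nab)^2]\phi$, which after Leibniz expansion are bilinear/trilinear in derivatives of $u$ and $\phi$ with total order $\le N$; (b) the source terms $Z^\a\div(\nab\phi\odot\nab\phi)$ in the $u$-equation and the metric-perturbation terms $Z^\a\big((2u\cdot\nab\d_t + (u\cdot\nab)^2 - u\cdot\nab\text{-stuff})\phi\big)$ in the $\phi$-equation. The strategy for each: split into low-high and high-low frequency (or low-order/high-order derivative) pieces; on the factor carrying $\le N/2$ derivatives use the decay estimates \eqref{decay-thm} (bootstrapped in parallel via Klainerman--Sobolev from the energy, i.e. $\|\nab_{t,x}\phi\|_{\Lf} + \|u\|_{\Lf} \lesssim \jap{t}^{-1/2} E_{\le N/2}^{1/2}$, possibly with a $\jap{t-r}^{-1/2}$ or $\jap{t+r}^{-1/2}$ refinement); on the factor carrying $> N/2$ derivatives use the $L^2$ energy. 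The key arithmetic is that every dangerous term must be dominated by either (i) a term absorbable into the heat dissipation $\|\nab Z^\a u\|_{L^2}^2$, (ii) a term absorbable into the ghost-weight dissipation of $\phi$ or of $u$, or (iii) a term of the form $\jap{t}^{-1} E_N(t)$ whose time integral over $[0,T]$ is $\le (\log T)\,(A\ep)^2 \lesssim \tfrac{C_T}{\ep}(A\ep)^2$ --- and this is precisely where the \emph{almost} global (rather than global) conclusion comes from: a genuine logarithmic loss that one cannot beat in $n=2$, so the time of existence is $e^{C_T/\ep}$ with $C_T$ chosen small relative to $A^{-2}$.

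The structure of the argument then is: (1) set up function spaces $H^N_\Ga$, the energy $E_N(t)$, and state the bootstrap assumption; (2) prove the Klainerman--Sobolev decay estimates from $E_N$, getting \eqref{decay-thm} with constant $A\ep$; (3) derive the master energy inequality with all dissipative terms on the left via the ghost-weight-modified pairing, exploiting the $\S^1$-geometry so that the genuinely quadratic wave-map nonlinearity vanishes (only metric terms, which involve $u$, survive --- this is the first key ingredient and it is what makes $u/\jap{t-r}\in L^2_tL^2_x$ usable as a \emph{replacement} for the missing null structure); (4) bound every error term as above; (5) integrate in time and close the bootstrap. I expect the main obstacle to be step (3)--(4) for the top-order terms where $u$ appears undifferentiated in the metric coefficient multiplying $N$-th order derivatives of $\phi$: there the only available control on $u$ is the ghost-weight dissipation $\|u/\jap{t-r}\|_{L^2_tL^2_x}$, so one must arrange the pairing so that the problematic term genuinely comes paired against a good derivative of $\phi$ (also controlled by ghost dissipation) via a Cauchy--Schwarz in spacetime, rather than against a generic $L^2$ energy term; getting this algebra to match --- reconciling which factor carries the ghost weight, which carries $\jap{t-r}^{-1}$, and ensuring no leftover $\jap{t-r}^{+1/2}$ appears --- is the delicate heart of the proof, and is exactly the point the abstract flags as the crucial new observation.
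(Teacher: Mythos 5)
Your high-level outline matches the paper's: angle reformulation killing the $\S^1$ wave-map nonlinearity, vector-field energy method, ghost weight $e^{-q(t-r)}$ producing spacetime dissipation for both the good derivative of $\phi$ \emph{and} for $u/\jap{t-r}$, bootstrap closed up to a logarithmic loss which gives the $e^{C_T/\ep}$ timescale. However, there are two concrete gaps that would stop a proof built from your proposal as written.

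First, you assert the decay $\|u\|_{\Lf} \lesssim \jap{t}^{-1/2} E_{\le N/2}^{1/2}$ ``via Klainerman--Sobolev from the energy,'' treating $u$ on the same footing as $\nab_{t,x}\phi$. This does not work: $u$ solves a forced Navier--Stokes/heat equation, not a wave equation, and Klainerman--Sobolev inequalities adapted to the wave or scaling vector fields do not yield sharp parabolic decay. In the paper this step is an entire separate argument (Proposition \ref{Dec-lem}) based on Duhamel's formula for $e^{t\De}$, heat-kernel $L^p$--$L^q$ smoothing, a careful treatment of the commutator $[\tilde S, \De]u$, and an internal continuity/fixed-point argument on $\|t^{1/2}u\|_{\Lf\Lf}$ because the quadratic self-interaction $u\cdot\nab u$ appears quadratically in the Duhamel bound. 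Without some version of this heat-flow machinery, you cannot assert the $u$-decay, and without the $u$-decay the nonlinear error estimates do not close.

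Second, your proposal never introduces the weighted $L^2$ generalized energy $\XX_N$ (the Klainerman--Sideris quantity $\int \jap{t-r}^2 |D^2 \Ga^\al\phi|^2$), which is in fact a second bootstrap unknown in the paper (Proposition \ref{Main_Prop} bootstraps both $E_N^{1/2}$ and $\XX_{N-2}^{1/2}$). The sharp pointwise decays you need --- $t|D\phi|$ away from the cone, $\jap{r}^{1/2}\jap{t-r}|D^2\phi|$ near the cone --- are not pure Klainerman--Sobolev consequences of $E_N$; they require the extra $\jap{t-r}^2 |D^2\phi|^2 \in L^2$ control, which in turn is obtained by pulling a $\jap{t}$ weight through the wave equation $(\d_t^2 - \De)\Ga^\al\phi = g_\al$ and estimating $\jap{t}\|g_\al\|_{L^2}$ via the structure of the acoustical-metric terms. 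Omitting $\XX_N$ leaves no mechanism to convert the $L^2$ energy into the $\jap{t-r}$-weighted decay that your step (4) relies on when estimating the quadratic $u\cdot\nab\d_t\phi$ near the light cone. Relatedly, the top-order derivative loss from $\d_t\Ga^\al u\cdot\nab\phi$ (the $\be=\al$ case in $g_\al$) is resolved in the paper by explicit energy corrections $\tilde\EE_\al(\phi,t)$ added to the modified energy; your remark that one should ``arrange the pairing so that the problematic term comes paired against a good derivative'' gestures at this issue but does not supply the normal-form / energy-correction mechanism that actually makes the cancellation happen.
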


\begin{remark}
The functional spaces $H^N_\Lambda$, $H^N_{\Ga}$ and energy bounds $E_N$ are defined by using the vector fields associated with the symmetries of \eqref{Main_Sys}, which will be provided in Section \ref{sec-pre}. Moreover, we will utilize the modified energy and ghost weight energy to derive our main energy estimates, where the modified energy is equivalent with $E_N$ as above.
\end{remark}

\begin{remark}
We should mention that the optimal decay estimate of $u$ in \eqref{decay-thm} does not depend on the time interval. This will be beneficial to prove global regularity in further works. 
	Moreover, the decay estimate is sharp in the sense that the decay rate is the same as the linear heat equation if the initial data lies in the energy space. 
\end{remark}

In two dimensions, there are two obstacles that prevent us from proving the above result. 
Firstly, the optimal pointwise decays of velocity $u$ and angle $\phi$ are the poor form $t^{-1/2}$, which shows a big gap compared with the “almost” integrable $t^{-1}$ decay. Secondly, the structure of acoustical metric (or second-order material derivatives) is still unclear, which has a full (codimension 1) set of time resonances. The slow decays and the presence of large sets of resonances require new ideas to control the growth of the solution over time.

Our proof of Theorem \ref{Main_thm} is based on the energy estimates and decay estimates. Some key observations and novelties  are emphasized as follows.
	
\medskip

\emph{1. The geometry of  $\S^1$ annihilates the nonlinearity.}
The wave map equation is usually reformulated in a suitable gauge. Under the gauge, the wave map with general target manifolds will lead to a  \emph{cubic} wave equations as shown by Tataru \cite{Tataru,Tataru05} and Tao \cite{Tao2,TaoRe}. However, the direction field $d$ for the two-dimensional liquid crystals lies in a  circle $\S^1$. Thus we'd like to write the system as a more beautiful formula using the geometry structure of $\S^1$. 
Since the tangent bundle of $\S^1$   is a  line bundle, there does not admit any additional gauge freedom compared with the general target manifold. As a result, the nonlinearity will be annihilated under our gauge choice. 
In this article, there is an alternative natural formulation for two-dimensional liquid crystals in terms of  $\phi$, which represent the angle between $d$ and $x$-axis due to the special geometry of $\S^1$. 
Actually, the angle variable $\phi$ is also widely used in many physical literature related to liquid crystals, see \cite{Leslie-1979}. Hence, the wave map equation \eqref{ori_sys} is essentially a free wave equation with a highly nontrivial metric, the acoustical metric.

\medskip 
\emph{2. Optimal decay estimates of velocity $u$.} 
The optimal decay estimates of velocity $u$ are necessary for our proof. Different from the three-dimensional case, the weaker decay of heat flow $e^{t\De}$ in $\R^2$ makes it more difficult to obtain the decay estimates for $u$. 
Fortunately, the following estimate hold
\[   \|t^{1/2}u\|_{\Lf\Lf}\les \ep+ \|t^{1/2}u\|_{\Lf\Lf}^2 \, .  \]
under the bootstrap assumptions. Then the decay estimate follows from the continuity method. Indeed, the decay rate we obtained is optimal and no additional assumptions in the length of the time interval are required in the decay estimates.

\medskip 
\emph{3. Ghost weight energy and new good unknown.} 
The ghost weight method is originally introduced by Alinhac in \cite{A01} to study the quasilinear wave equations, which is also robust in the parabolic-hyperbolic liquid crystal model.
Precisely, we define the energy functional and weight energy using the ghost weight $e^{-q}$.
Then we deduce two good unknowns, ``$\om\d_t\phi+\nab\phi$" and velocity ``$u$", in the system \eqref{Main_Sys}, where the latter one is first discovered in this article. That is to say, we have the following additional dissipative properties:
\begin{align*}
    \frac{\om\d_t \phi+\nab\phi}{\<t-r\>}\in L^2L^2\,,\quad \frac{ u}{\<t-r\>}\in L^2L^2\,. 
\end{align*}
The above two functions are used to define the so-called ``ghost weight energy" $\mathcal D_\al$.

\medskip 
\emph{4. Acoustical metric (or second-order material derivative).}
The equation of $\phi$ in \eqref{Main_Sys} evolves along a free wave equation under the acoustical metric, where the highly nontrivial metric (or the second-order material derivative) brings many issues in the energy estimates and weighted $L^2$ estimates. Here we have to investigate the worst term that comes from acoustical metric, that is the quadratic term $u\cdot\nab\d_t\phi$.  Given the above good unknowns, we know that $u\cdot\nab \d_t\phi$ admits \emph{null structure}, which is sufficiently in three dimensions. However, in the current paper, we must know more about structure. From the decomposition of $\nab=\om\d_r+\frac{\om^\perp}{r}\d_\th$, the quadratic term can be decomposed as
\begin{align*}
    u\cdot\nab \d_t\phi =u\cdot \om \d_r\d_t\phi+u\cdot \frac{\om^\perp}{r}\d_\th \d_t\phi\,.
\end{align*}
The first term on the right hand side admit ``almost" integrable decay $t^{-1}$ due to $u\cdot \om \les \ep \<t\>^{-1}$. The second term also has the same decay $t^{-1}$ near light cone $r\approx t$, which is the worst area that the nonlinearities are controlled. Hence, with our rigorous analysis, we can obtain the log-growth for the energy of \eqref{Main_Sys}.

\bigskip 
This paper is organized as follows. In Section 2, we will first derive the system \eqref{ori_sys} with a general targetw manifold by energetic variational approach. Then we reformulate the system in two different methods. One of the formulations that introduce variable $\phi$ will be used in the present article. In Section \ref{sec-pre} we carry out a preliminary step in the proof of our main result, namely, we set up the notations for function spaces and state the bootstrap proposition. From this Proposition \ref{Main_Prop}, we can use the continuity method to prove our main Theorem \ref{Main_thm}. In the remaining sections, we turn our attention to the proof of Proposition \ref{Main_Prop}, which contains three parts. In Section \ref{sec-decay}, we give the various decay estimates of $u$ and $\phi$. Particularly, the decays of $\nab u$ in $L^2$, $\nab u,\ \nab^2 u$ and $\d_t u$ in $\Lf$ are also provided. These estimates are also needed in the other two parts. The next Section \ref{sec-L2} is devoted to the estimates of $L^2$ weighted norms $\XX_{N-2}$ and $\XX_N$. Here we show an appropriate bound for the higher order $L^2$ weighted norms. Finally, in Section \ref{sec-Energy}, we use the ghost weight method to prove the modified energy estimates. Here, energy corrections are also needed to deal with derivative loss. In the last section, we prove the Proposition \ref{Main_Prop} by energy estimates. This closes our proof of Theorem \ref{Main_thm}.

{\bf Notations}  The set of all Schwartz functions is called a Schwartz space and is denoted $\mathcal{S}(\R^n)$. For any $x\in\R$, denote $\<x\>:=\sqrt{1+x^2}$. For any two numbers $A$ and $B$ and a absolute constant $C$, we denote
\begin{equation*}
	A\lesssim B\,,\ B\gtrsim A\,,\ \mathrm{if}\ A\leq CB\,.
\end{equation*}

For a vector $(v_1,v_2)\in \R^2$, we denote 
\begin{equation*}
	v^\bot=(-v_2,v_1)\,.
\end{equation*}
Here we always define $\om$ as the unit vector
\[  \om=\big(\frac{x_1}{r},\frac{x_2}{r}\big)=\frac{x}{r}\, .  \]

\bigskip
\section{Variational Derivation and reformulation}\label{sec-derivation}
In this section, we concern on the derivation and reformulation of the simplified Ericksen-Leslie's hyperbolic liquid crystal model \eqref{ori_sys}. First we will derive the 
system \eqref{ori_sys} with general target manifold for map $d$ by energetic variational approach. Then we will reformulate the
system in two different ways. One way is to use the angles between $d$ and axis or plane, which is convenient to study the wave map equations with sphere target; the other way is to use gauge theory, which is more suitable for general wave map equations.

\subsection{Derivation of Ericksen-Leslie's hyperbolic liquid crystal model}

Here we derive the Ericksen-Leslie's liquid crystal model by energy variational method. 

We consider the energy dissipation law:
\begin{align*}
	\frac{d}{dt}\int \frac{1}{2}\big(|u|^2-|D_t d|^2+|\nab d|^2\big)\  dx=-\int_\Om \nu |\nab u|^2\ dx\,,
\end{align*}
Then the action functional is given by
\begin{align*}
	\AA(x^\ep,d^\ep)=\frac{1}{2}\int \big( -|\d_t x^\ep|^2-|\d_t d^\ep|^2 +g^{\ep,ij}\d_i d^\ep \d_j d^\ep \big) \sqrt{\det g^\ep}\ dXdt \,,
\end{align*}
where the one-parameter families of such flow maps $x^\ep$ and $d^\ep$ satisfying
\begin{equation*}
    x^0(X,t)=x(X,t)\,, \quad d^0(X,t)=d(X,t)\,,\quad \frac{d}{d\ep}\big|_{\ep=0}x^\ep=\de x\,,\quad \frac{d}{d\ep}\big|_{\ep=0} d^\ep=\de d\in T_{}N.
\end{equation*}
Denote $F_\ep=\frac{\d x^\ep}{\d X}$, we have
\begin{align*}
	g^\ep_{ij}=\d_i x^\ep\cdot  \d_j x^\ep\,.
\end{align*}
For shorten of notation, we would use the notation $\de=\frac{d}{d\ep}\big|_{\ep=0}$ denoted as the variation/derivative.
Then 
\begin{align*}
	\de g_{ij} =\d_i \de x\cdot \d_j x+\d_i x\cdot \d_j \de x\,,
\end{align*}
which implies
\begin{align*}
	\de g^{ij} =-g^{ik}g^{jl}\de g_{kl}=-g^{ik}g^{jl}( \d_k \de x\cdot \d_l x+\d_k x\cdot \d_l \de x)\,.
\end{align*}

By imcompressibility we have
\begin{align*}
	\de \int -\frac{1}{2}|\d_t x^\ep|^2 \det F_\ep \ dXdt&= \int-\d_t x \d_t \de x \det F- \frac{1}{2}|\d_t x|^2 \tr (F^{-1}\nab \de x)\det F\ dX dt\\
	&=\int  \big(\d^2_t x +\d_t x \tr (F^{-1}\d_t F) \big) \cdot\de x\det F \ dX dt-\int \frac{1}{2} |u|^2 \nab\cdot \de x dxdt\\
	&=\int (D_t u  +\frac{1}{2}\nab\cdot |u|^2)\cdot \de x \ dx dt\,.
\end{align*}
and
\begin{align*}
	&\de\int \frac{1}{2}( -|\d_t d^\ep|^2 +g^{\ep,ij}\d_i d^\ep \d_j d^\ep ) \sqrt{\det g^\ep}\ dXdt\\
	&= \int   ( -\d_t d  \d_t \de d+\frac{1}{2}\de g^{ij} \d_i d \d_j d+g^{ij}\d_i d \d_j \de d )\sqrt{\det g}\\
	&\quad +\frac{1}{2} ( -|\d_t d|^2 +g^{ij}\d_i d \d_j d ) \det F \tr(F^{-1}\nab \de x)\ dXdt\\
	&=\int \Big( \d^2_t d  +\d_t d \tr (F^{-1}\d_t F) -\frac{1}{\sqrt{\det g}}\d_j (\sqrt{\det g} g^{ij}\d_i d)\Big) \de d \sqrt{\det g}\ dXdt\\
	&\quad +\int \frac{1}{2}\de g^{ij} \d_i d \d_j d \det F+\frac{1}{2} ( -|\d_t d|^2 +g^{ij}\d_i d \d_j d ) \det F \tr(F^{-1}\nab \de x)\ dXdt\\
	:&=I+II\,.
\end{align*}
The second integral $II$ can be written as
\begin{align*}
	&\int \frac{1}{2}\de g^{ij}\d_i d\d_j d \det F+\frac{1}{2} ( -|\d_t d|^2 +g^{ij}\d_i d \d_j d ) \det F \tr(F^{-1}\nab \de x)\ dXdt\\
	&=\int -g^{ik}g^{jl}( \d_k \de x\cdot \d_l x)\d_i d\d_j d \det F+\frac{1}{2} ( -|\d_t d|^2 +g^{ij}\d_i d \d_j d ) \det F \tr(F^{-1}\nab \de x)\ dXdt\\
	&=\int -\d_{X_\mu} d\nab_X d \d_{X_\mu}\de X   +\frac{1}{2} ( -|D_t d|^2 +|\nab d|^2 )  \nab_x \de x \  dx dt\\
	&=\int \d_{x_\mu}(\d_{x_\mu} d\nab_x d )\de x  -\frac{1}{2}\nab_x ( -|D_t d|^2 +|\nab d|^2 )  \de x \  dxdt\,.
\end{align*}

Here the relation in term of $d$ is obtained,
\begin{equation*}
	\big( \d^2_t d  +\d_t d \tr (F^{-1}\d_t F) -\frac{1}{\sqrt{\det g}}\d_j (\sqrt{\det g} g^{ij}\d_i d)\big) \de d=0\,.
\end{equation*}
Namely, 
\begin{align*}
	\d^2_t d+\d_t d \tr (F^{-1}\d_t F) -\De_g d=(-|\d_t d|^2+|\nab d|_g^2)d\,.
\end{align*}

As a consequence of the above computation, in the Eulerian coordinates, we obtain the equations of velocity $u$
\begin{align*}
	D_t u-\nu\De u+\nab p=-\d_j (\nab d\cdot \d_j d)\,.
\end{align*}
and the equation of orientation fields $d$ with $|d|=1$
\begin{align*}
	D_t^2 d-\De d = (-|D_t d|^2+|\nab d |^2)d\,.
\end{align*}
More generally, when the target manifold of map $d$ is $\mathcal N$, we have
\begin{align*}
	(D_t^2-\De) d^i = \Ga^i_{jk}(d)D^\al d^j D_\al d^k\,.
\end{align*}
where $\Ga^i_{jk}$ are the Christoffel symbols, and $D^t =-D_t$, $D_j=\d_{x_j}$ for $j=1,\cdots,n$.

\subsection{Reformulations of Ericksen-Leslie's liquid crystal model}
Here we will provide two formulations of model \eqref{ori_sys}, which are convenient to study this system.

First, we provide the standard formulation of \eqref{ori_sys} by gauge theory. We define the tangent vector-field $d^\bot$ in tangent bundle $T \S^1$ as 
\begin{align*}
	d^{\bot}=(-d_2,d_1)\,.
\end{align*}
We also define the main variables 
\begin{equation*}
	\psi_\al =\d_\al d\cdot d^\bot\,,\quad \al=0,\cdots ,n\,.
\end{equation*}
then the compatibility condition is imposed
\begin{equation*}
	\d_\al \psi_\be=\d_\be \psi_\al\,.
\end{equation*}
	
In view of the $d$-equation in \eqref{ori_sys}, we have
\begin{align*}
	D_t\big( (\psi_0+u\cdot\psi)d^\bot\big) -\d_j (\psi_j d^\bot)&=(-|\psi_0+u\cdot\psi|^2+|\psi|^2) d\,,
\end{align*}
which, due to the fact that $d\in \S^1$ and the expression of $d^\bot$, can be further written as
\begin{align*}
	D_t (\psi_0+u\cdot\psi) d^\bot-|\psi_0+u\cdot \psi|^2 d -\d_j \psi_j d^\bot +|\psi|^2 d &=(-|\psi_0+u\cdot\psi|^2+|\psi|^2) d\,.
\end{align*}
This yields
\begin{equation}     \label{wave}
	D_t (\psi_0+u\cdot\psi) -\d_j \psi_j=0\,.
\end{equation}

In order to derive a wave equation for variables $\psi_k$, we apply $\d_k$ to  \eqref{wave} and get
\begin{align*}
	0&=\d_k \big(D_t (\psi_0+u\cdot\psi) -\d_j \psi_j\big)\\
	&= D_t \d_k (\psi_0+u\cdot\psi)+\d_k u\cdot \nab (\psi_0+u\cdot \psi)-\De \psi_k\\
	&= D_t^2 \psi_k+D_t (\d_k u\cdot \psi)+\d_k u\cdot D_t \psi +\d_k u\cdot \nab u\cdot \psi-\De \psi_k\,.
\end{align*}
Thus we obtain the equation
\begin{equation*}
	D_t^2 \psi_k-\De \psi_k=-D_t (\d_k u\cdot \psi)-\d_k u\cdot D_t \psi -\d_k u\cdot \nab u\cdot \psi\,.
\end{equation*}

In conclusion, the system \eqref{ori_sys} is rewritten as 
\begin{equation}                   \label{Main_Sys-re2}
	\left\{\begin{aligned}
		&\partial_t u-\De u    =-u\cdot\nabla u-\nab p-\d^j(\psi \psi_j)\,, \\
		&\div\ u  =0\,,\\
		&D_t^2 \psi_k-\De \psi_k=-D_t (\d_k u\cdot \psi)-\d_k u\cdot D_t \psi -\d_k u\cdot \nab u\cdot \psi\,,
	\end{aligned}
	\right.
\end{equation}
with the constraints
\begin{equation*}
	\d_\al \psi_\be=\d_\be \psi_\al\,,\quad \al,\be=0,1,\cdots,n\,.
\end{equation*}
Note that the standard gauge formulation \eqref{Main_Sys-re2} is complicated, which may not be suitable for our problem.

\

Next, we state the second formulation in terms of angle $\phi$ due to the target manifold $\S^1$. Due to special target $\S^1$, the orientation field $d$ can be expressed by the angle $\phi$ between $d$ and $x$-axis:
\begin{align*}
	d(x,t)=(\cos \phi(x,t),\sin\phi(x,t))^T\,.
\end{align*}
Denote $d^\bot=(-\sin\phi,\cos\phi)$, then the wave map equation in \eqref{ori_sys} reads as 
\begin{align*}
	 d^\bot D^2_t\phi-|D_t \phi|^2 d-d^\bot \De \phi+|\nab \phi|^2 d=(-|D_t \phi|^2+|\nab\phi|^2) d\,,
\end{align*}
which implies 
\begin{align*}
	(D_t^2\phi-\De \phi) d^\bot=0\,.
\end{align*}
As a result, \begin{align*}
	D_t^2\phi-\De \phi=0\,.
\end{align*}
Meanwhile, the nonlinear term $-\d_j(\nab d \cdot\d_j d)$ can be written as 
\begin{align*}
	-\d_j (\nab d\cdot  \d_j d)=-\d_j (d^{\bot}\nab\phi\cdot d^{\bot}\d_j \phi)=-\d_j (\nab \phi\cdot  \d_j \phi)\,.
\end{align*}

Hence, the Ericksen-Leslie's hyperbolic liquid crystal model \eqref{ori_sys} in dimensions two reads 
\begin{equation}                   \label{Main_Sys-re}
	\left\{\begin{aligned}
		&\partial_t u-\De u    =-u\cdot\nabla u-\nab p-\d^j(\nab \phi\cdot \d_j \phi)\,, \\
		&\div\ u  =0\,,\\
		&(\d_t+u\cdot\nab)^2 \phi-\Delta\phi=0\,.
	\end{aligned}
	\right.
\end{equation}
We can find that the nonlinearity in $\phi$-equation is eliminated, which is very important in understand the system. The above formulation \eqref{Main_Sys-re} is really effective when the target is a circle $\S^1$.

\medskip

Compared the above two formulations \eqref{Main_Sys-re2} and \eqref{Main_Sys-re}, when the target manifold is $\S^1$, we would use the second formulation \eqref{Main_Sys-re} to study the long-time behavior of hyperbolic liquid crystal, which is a quite perfect formula. However, the acoustical metric is still present, where the investigation of the structure is the main part of the article.

\bigskip 
\section{Preliminaries and the main proposition}  \label{sec-pre}
In this section, we introduce the vector fields and define the function spaces. Then we state our main bootstap proposition, which  will be used to prove Theorem \ref{Main_thm}.

\subsection{Vector fields and function spaces}
Here we start with the vector fields, which are used to define the function spaces. From the rotation invariance, we define the perturbed angular momentum operators by
\begin{equation*}
	\tilde{\Om} u=\Om u+A u\,,\quad  \tilde{\Om}_i d=\Om_i d\,,
\end{equation*}
where $\Om=x_1\d_2-x_2\d_1=\d_\th$ is the rotation vector-field and the matrix $A$ is defined by
\begin{equation*}
	A=\left(\begin{array}{ccc}
		0&1\\
		-1&0
	\end{array}\right)\,.
\end{equation*}

Next, we introduce the vector fields corresponding to the scaling invariance. Denote the standard scaling vector field $S$ as 
\begin{equation*}
	S=t\d_t+x_i\d_{x_i}\,.
\end{equation*}
We note that the system \eqref{Main_Sys} does not have any scaling invariance. However, if the viscosity disappear, we will have the scaling transform: for any $\la>0$,
\begin{align*}
	u_{\la}(t,x)=u(\la t,\la x)\,,\qquad p_\la(t,x)=p(\la t,\la x)\,,\qquad \phi_{\la}(t,x)=\la^{-1}\phi(\la t,\la x)\,.
\end{align*}
Inspired by these, we denote the perturbed scaling operator as 
\begin{equation*}
	\tilde{S}u=Su\,,\quad \tilde{S}p=Sp,\quad \tilde{S}\phi=(S-1)\phi\,.
\end{equation*}
Although applying $S$ to the equation of $u$ would produce lower-order terms, they still can be dealt with in the decay estimates and energy estimates. Hence  the scaling operators play key roles under this circumstance.

For convenience, we denote
\begin{equation*}
	Z\in \{\d_t,\d_1,\d_2,\tilde{\Om}\}\,,\qquad \Ga^\al=\tilde{S}^{\al_1}Z^{\al'}\,,
\end{equation*}
where $\al=(\al_1,\al'):=(\al_1,\al_2,\cdots,\al_5)\in\Z_+^5$ are multi-indices and $Z^{a'}=Z^{\al_2}Z^{\al_3}\cdots Z^{\al_5}$.

Applying vector fields $Z$ to the system (\ref{Main_Sys}) we can derive yields
\begin{equation}         \label{Main_Sys_VecFie}
	\left\{\begin{aligned}
		&\d_t \Ga^\al u-\De(S-1)^{\al_1} Z^{\al'}u+\nab \Ga^\al p=f_\al\,,\\
		&\div\ \Ga^\al u=0\,,\\
		&\d_t^2\Ga^\al \phi-\Delta\Ga^\al\phi=g_\al\,,
	\end{aligned}
	\right.
\end{equation}
where nonlinearities $f_\al$ and $g_\al$ are
\begin{align}  \label{fal}
	f_\al&=-\sum_{\be+\ga=\al}C_\al^\be\big( \Ga^\be u\cdot\nab \Ga^\ga u+ \d_j(\Gamma^{\be}\phi\cdot\d_j\Gamma^{\ga}\phi)\big)\,,\\  \label{gal}
	g_\al&=-\sum_{\be+\ga=\al}C_\al^\be(\d_t \Ga^\be u\cdot\nab\Ga^\ga\phi+2\Ga^\be u\cdot\nab\d_t\Ga^\ga\phi)
	-\sum_{\be+\ga+\si=a}C_\al^{\be,\si}\Ga^\be u\cdot\nab(\Ga^\ga u\cdot\nab\Ga^\si \phi)\,,
\end{align}
and the constants are
\begin{equation*}
C_a^b:=\frac{a!}{b!(a-b)!}\,,\qquad  
C_a^{b,c}:=\frac{a!}{b!c!(a-b-c)!}\,.
\end{equation*}

We define the Klainerman’s generalized energy by
\begin{align*}
E_{N}(t)=\sum_{|\al|\leq N} \Big(\|\Ga^\al u\|_{L^2}^2+\int_0^t  \|\nab\Ga^\al u\|_{L^2}^2\ ds+\|\nab_{t,x}\Ga^\al\phi\|_{L^2}^2\Big)\,,
\end{align*}
and define the weighted $L^2$ generalized energy by
\begin{align}         \label{XX}
\XX_{N}(t)=\sum_{|\al|\leq N-1}\Big( \int_{r\leq 2\<t\>} \<t-r\>^2 |D^2\Ga^\al\phi|^2\ dx+ \int_{r> 2\<t\>} \<t\>^2 |D^2\Ga^\al\phi|^2\ dx \Big)\,.
\end{align}
Here we also need the dissipative energy and ghost weight energy given by
\begin{equation*}
\mathcal D_{N}(t)=\sum_{|\al|\leq N}\Big(\|\nab\Ga^\al u\|_{L^2}+\Big\|\frac{\Ga^\al u}{\<t-r\>}\Big\|_{L^2}+ \Big\|\frac{\om \d_t \Ga^\al \phi+\nab\Ga^\al \phi}{\<t-r\>}\Big\|_{L^2}\Big)\,,
\end{equation*}
where the second and third terms are called \emph{ghost weight energy} that comes from the modified energy estimates.

In order to characterize the initial data, we introduce the time independent
analogue of vector field $Z$. The only difference will be in the scaling operator. Set
\begin{equation*}
	\Lambda=(\Lambda_1,\cdots,\Lambda_4)=(\d_1,\d_2,\tilde \Om,\tilde S_0)\,,\quad \tilde S_0=\tilde S-t\d_t\,.
\end{equation*}
Then the commutator of any two $\Lambda$'s is again a $\Lambda$. We define 
\begin{equation*}
	H^N_\Lambda=\Big\{ (u,\phi_0,\phi_1):\sum_{|\al|\leq N}\big(\|\Lambda^\al u\|_{L^2}+\|\nab \Lambda^\al \phi_0\|_{L^2} +\| \Lambda^\al \phi_1\|_{L^2}\big) <\infty \Big\}\,.
\end{equation*}
We study the liquid crystal model in the space
\begin{align*}
	H^N_\Ga(T)=\Big\{ (u,\phi): \Ga^\al u, \d_t \Ga^\al\phi,\nab \Ga^\al\phi\in L^\infty([0,T]; L^2), \nab \Ga^\al u\in L^2([0,T];L^2) ,\quad \forall |\al|\leq N     \Big\}\,.
\end{align*}

\subsection{The bootstrap proposition and the proof of Theorem \ref{Main_thm}}
Our main result is the following proposition:
\begin{proposition}[Bootstrap proposition]   \label{Main_Prop}
	Assume that $(u,\phi)$ is a solution to \eqref{Main_Sys} on some time interval $[0,T]$ for any $T\leq e^{C_T/\ep}$ with initial data satisfying the assumptions \eqref{MainAss_dini}. Assume also that the solution satisfies the bootstrap hypothesis
	\begin{gather}           \label{Main_Prop_Ass1}
		E^{1/2}_{N}(t)\leq C_0\ep\,,\qquad \XX^{1/2}_{N-2}\leq C_0^2\ep\,.
	\end{gather}
	Then the following improved bounds hold
	\begin{gather}\label{Main_Prop_result1}
		E^{1/2}_{N}(t)\leq \frac{C_0}{2}\ep\,,\qquad \XX^{1/2}_{N-2}\leq \frac{C_0^2}{2}\ep\,.
	\end{gather}
\end{proposition}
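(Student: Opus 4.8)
The plan is to run a standard continuity/bootstrap argument built on three coupled pieces of information: decay estimates for $u$ and $\nab_{t,x}\phi$, the weighted $L^2$ bounds $\XX_{N-2}$ and $\XX_N$, and the modified/ghost-weight energy bounds for $E_N$. The bootstrap hypothesis \eqref{Main_Prop_Ass1} is assumed with constant $C_0\ep$, and on the time interval $[0,e^{C_T/\ep}]$ one wants to recover the same bounds with $C_0/2$. The logarithmic length of the time interval is exactly what allows a $\log t$-growth in the top-order energy to be absorbed: a factor like $\ep^2\log(e^{C_T/\ep})=\ep C_T$ is small provided $C_T$ is chosen small relative to the absolute constants. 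So the scheme is: (a) from \eqref{Main_Prop_Ass1} derive the decay estimates $\|u(t)\|_{\Lf}\les \ep\<t\>^{-1/2}$, $\|\nab_{t,x}\phi(t)\|_{\Lf}\les \ep\<t\>^{-1/2}$ together with the auxiliary decays of $\nab u,\nab^2 u,\d_t u$ listed in the introduction (this is Section~\ref{sec-decay}); (b) feed those decays into the weighted $L^2$ estimates to bound $\XX_{N-2}$ and $\XX_N$ (Section~\ref{sec-L2}); (c) run the ghost-weight energy estimate, with energy corrections to handle the derivative loss coming from the quasilinear term $u\cdot\nab\d_t\phi$, to bound $E_N$ (Section~\ref{sec-Energy}); and then close the loop.

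Concretely, for the energy estimate one differentiates \eqref{Main_Sys} by $\Ga^\al$, $|\al|\le N$, to obtain \eqref{Main_Sys_VecFie} with sources $f_\al,g_\al$ as in \eqref{fal}, \eqref{gal}. For the $u$-equation one uses the standard parabolic energy identity, which produces the dissipation $\int_0^t\|\nab\Ga^\al u\|_{L^2}^2$; the forcing terms $\Ga^\be u\cdot\nab\Ga^\ga u$ and $\d_j(\Ga^\be\phi\c\d_j\Ga^\ga\phi)$ are handled by splitting into low-high/high-low, putting the low-derivative factor in $\Lf$ using step (a) and the high-derivative factor in $L^2$, gaining an integrable-in-time factor $\<t\>^{-1/2}$ paired with the dissipation or the ghost-weight terms from $\mathcal D_N$. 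For the $\phi$-equation one uses the Alinhac ghost weight $e^{-q}$ with $q=q(r-t)$, so that $\frac{d}{dt}$ of the weighted energy controls $\big\|\frac{\om\d_t\Ga^\al\phi+\nab\Ga^\al\phi}{\<t-r\>}\big\|_{L^2}^2$; the good unknown $u$ simultaneously yields $\big\|\frac{\Ga^\al u}{\<t-r\>}\big\|_{L^2}^2$ as extra dissipation. The worst term is $2\Ga^\be u\cdot\nab\d_t\Ga^\ga\phi$ (the second-order material derivative), which one decomposes along $\nab=\om\d_r+\frac{\om^\perp}{r}\d_\th$: the radial part is $(\u\c\om)\d_r\d_t\Ga^\ga\phi$ with $\u\c\om\les\ep\<t\>^{-1}$ (almost integrable), while the angular part $\frac{1}{r}(\u\c\om^\perp)\d_\th\d_t\Ga^\ga\phi$ is controlled near the light cone $r\approx t$ using $\XX$ and the ghost weight, and away from it using the extra $\<t\>^{-1}$ from $1/r$; this is where the $\log t$ growth is generated and where the weighted norms $\XX_{N-2},\XX_N$ are indispensable.

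The main obstacle, as the introduction flags, is precisely the treatment of the acoustical-metric term and the derivative loss it induces: $u\cdot\nab\d_t\Ga^\ga\phi$ at top order $|\ga|=N$ contains two derivatives on $\phi$ beyond the energy level, and the naive estimate loses a derivative. The remedy is an energy correction/modified energy (equivalent to $E_N$ as stated in the remark), essentially absorbing $u\cdot\nab\Ga^\al\phi$ into the definition of $\d_t$ so that the leading quasilinear contribution is exact and only commutator and lower-order remainders survive; these remainders carry either a $\nab u$ (hitting the dissipation) or a decaying coefficient. One also has to be careful that the scaling vector field $\tilde S$ applied to the $u$-equation produces the extra term $\De(S-1)^{\al_1}Z^{\al'}u$ versus $(S-1)^{\al_1}\De Z^{\al'}u$, i.e. lower-order terms $\nab^2 Z^{\al'}u$; these are paired with the decay of $\nab^2 u$ in $\Lf$ from step (a) and are harmless.

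Finally, with the three families of bounds in hand — $E_N^{1/2}(t)\les \ep + C_0^2\ep^2 + C_0^3\ep^2\log\<t\>$, $\XX_{N-2}^{1/2}\les \ep + (C_0\ep)^2$, $\XX_N^{1/2}\les E_N^{1/2}+ (C_0\ep)^2$ — one chooses $C_0$ large depending only on absolute constants, then $C_T$ small so that $C_0^3\ep^2\log(e^{C_T/\ep})=C_0^3\ep C_T\le \tfrac14$ and $\ep$ small so that the quadratic terms are $\le \tfrac14 C_0\ep$ (resp.\ $\tfrac14 C_0^2\ep$). This yields $E_N^{1/2}(t)\le \tfrac{C_0}{2}\ep$ and $\XX_{N-2}^{1/2}\le\tfrac{C_0^2}{2}\ep$ on all of $[0,e^{C_T/\ep}]$, which is \eqref{Main_Prop_result1}, and a standard local existence plus continuity argument then upgrades this to Theorem~\ref{Main_thm}.
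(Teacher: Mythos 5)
Your proposal reproduces the paper's architecture almost exactly: (a) derive the decay estimates for $u$ and $\nab_{t,x}\phi$ (and the auxiliary decays of $\nab u$, $\nab^2 u$, $\d_t u$) from the bootstrap hypothesis; (b) feed them into the weighted $L^2$ bounds for $\XX_{N-2}$ and $\XX_N$ via the Klainerman--Sideris inequality; (c) run the ghost-weight energy estimate with the two good unknowns $\om\d_t\Ga^\al\phi+\nab\Ga^\al\phi$ and $\Ga^\al u$, together with the modified-energy correction $\tilde\EE_\al(\phi)$ to offset the derivative loss from $u\cdot\nab\d_t\Ga^\al\phi$; then close the bootstrap by choosing $C_0$ and $C_T$ so that the logarithmic growth is absorbed on $[0,e^{C_T/\ep}]$. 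You also correctly flag the two structural pinch points — the $\nab=\om\d_r+\frac{\om^\perp}{r}\d_\th$ decomposition for the acoustical-metric term, and the lower-order terms generated by applying $\tilde S$ to the parabolic equation — exactly as the paper does.

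The only point where your bookkeeping is loose is the power of $\ep$ in the log-growth: the paper's energy estimates have source $\ep^3\<t\>^{-1}$, so after integrating one gets $E_N(t)\leq C_4\ep^2+C_4\ep^3\ln(e+t)$, and on $t\leq e^{C_T/\ep}$ the log term contributes $\sim C_4 C_T\ep^2$. You instead write $C_0^3\ep^2\log\<t\>$, which would give $C_0^3 C_T\ep$ and would require $C_T$ to be $\ep$-dependent — a more restrictive constraint than necessary. With the correct $\ep^3\ln$ one simply needs $C_4(1+C_T)\leq C_0^2/4$, which the paper satisfies by taking $C_0\geq 2\sqrt{2C_4}$ and $C_T=C_0^2/(8C_4)$ (so $C_T$ need not be small, only consistent with $C_0$). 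This is a minor discrepancy in constant-chasing rather than a gap in the argument; the mechanism you describe is correct.
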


With the bootstrap proposition at hand, we can prove the main Theorem \ref{Main_thm} as below. The next sections will focus on the proof of this proposition.

\begin{proof}[Proof of Theorem \ref{Main_thm}]	By local well-posedness of \eqref{Main_Sys} in \cite{Jiang-Luo-2018} we make a bootstap assumption where we assume the same bounds but with a worse constant, as follows:
	\begin{align*}
		E^{1/2}_N(t)\leq C_0\ep\,,\quad \XX^{1/2}_{N-2}(t)\leq C_0^2 \ep\,,
	\end{align*}
	on the time interval $[0,T]$ for $T\leq e^{C_T/\ep}$. Here $C_0\geq 1$ and $C_T$ are determined in Proposition \ref{Main_Prop}.
	
	Choosing $\ep$ sufficiently small, by Proposition \ref{Main_Prop}, we get the improved bounds
	\begin{align*}
		E^{1/2}_N(t)\leq \frac{C_0}{2}\ep\,,\quad \XX^{1/2}_{N-2}(t)\leq \frac{C_0^2}{2} \ep\,.
	\end{align*}
	Then the solution can be extended forward until $t\sim e^{C_T/\ep}$.
	Hence, the system \eqref{Main_Sys} with small initial data \eqref{MainAss_dini} admits a unique solution on a time interval of order $T\sim e^{C_T/\ep}$, whose energy $E_N^{1/2}$ is bounded by $C_0\ep$. The decay estimates \eqref{decay-thm} are obtained from \eqref{Decay-u}, \eqref{AwayCone0} and \eqref{NearCone0}, i.e. 
	\begin{align*}
		\|u\|_{\Lf}\les \<t\>^{-1/2}(E^{1/2}_2+\XX^{1/2}_2)\les \ep\<t\>^{-1/2}\,,
	\end{align*}
	and 
	\begin{align*}
		\|\nab_{t,x}\phi\|_{\Lf}\les \<t\>^{-1/2}(E^{1/2}_1+\XX^{1/2}_2)+\<t\>^{-1+\de}(E^{1/2}_0+\XX^{1/2}_2)\les \ep\<t\>^{-1/2}\,.
	\end{align*}
	Thus the proof of Theorem \ref{Main_thm} completes.
\end{proof}

\bigskip

\section{Decays Estimates}  \label{sec-decay}
In this section, we give various decay estimates of $u$ and $\phi$, which are bounded by  the energy $E_k$ and the weighted $L^2$ generalized energy $\XX_k$ for some $k\leq N$.  First, we prove the decay estimates for $D\phi$ and $D^2\phi$ in different regions. Then we prove the decay estimates of velocity $u$ by Duhamel's formulas and the decays of $\phi$. These estimates will be needed in the next sections in order for the control of $\XX_k$ and the energy estimates. 

\subsection{Decay estimates of the angle \texorpdfstring{$D\phi$}{}.} In order for the decay of $D\phi$, we start with the following weighted $L^{\infty}-L^2$ estimate. 

\begin{lemma}Let $f\in H^2(\R^2)$, $r=|x|$ and $\la=\frac{1}{2}\ or\ 1$, then there holds
	\begin{align}   \label{rv}
		&\lV  r^\la f\rV_{\Lf}\lesssim \sum_{|\alpha|\leq 1}(\lV r^{2\la-1}\d_r\Om^{\alpha}f\rV_{L^2}+\lV\Om^{\alpha}f\rV_{L^2})\,,\\    \label{rsi-f}
		&\| r^{1/2}\<t-r\>^\la f\|_{\Lf}\les \sum_{|\al|\leq 1 }\|\<t-r\>\d_r \Om^\al f\|_{L^2}+\|\<t-r\>^{2\la-1}\Om^\al f\|_{L^2}\,.
	\end{align}
	In particular, if $\div f=0$ for a vector $f=(f_1,f_2)$, then 
	\begin{equation}  \label{omf}
		\|r(\om\cdot f)\|_{L^\infty}\lesssim \sum_{|\alpha|\leq 2}\lV \Om^{\alpha}f\rV_{L^2}\,.
	\end{equation}
\end{lemma}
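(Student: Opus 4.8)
The three estimates \eqref{rv}, \eqref{rsi-f}, and \eqref{omf} are all of ``weighted trace/Sobolev on $\R^2$'' type, and the natural approach is to pass to polar coordinates $x=r\om$, $\om=(\cos\th,\sin\th)$, and reduce each inequality to a one-dimensional estimate in $r$ for fixed $\th$, followed by integration over $\th\in\S^1$ together with the standard one-dimensional Sobolev embedding $\|g\|_{L^\infty_\th(\S^1)}\les \|g\|_{H^1_\th(\S^1)}$ to absorb the $\Om$-derivatives. The key one-dimensional fact is the representation
\[
(r^{2\la}|f(r,\th)|^2)' = 2\la r^{2\la-1}|f|^2 + 2 r^{2\la}\,\Re(\bar f\,\d_r f),
\]
so that, integrating from $r$ to $\infty$ and using that $f\in H^2$ forces $r^{2\la}|f|^2\to 0$ (in an $L^1_\th$-averaged sense) as $r\to\infty$,
\[
r^{2\la}|f(r,\th)|^2 \les \int_r^\infty \big( \rho^{2\la-1}|f| + \rho^{2\la}|\d_r f|\big)|f|\,d\rho
\les \int_0^\infty \rho^{2\la-1}|f|\,|\d_\rho f|\,d\rho + \int_0^\infty \rho^{2\la-1}|f|^2\,d\rho ,
\]
and then Cauchy--Schwarz in $\rho$ (pairing $\rho^{2\la-1}\d_\rho f$ against $f$, and $\rho^{\la-1/2}f$ against $\rho^{\la-1/2}f$) together with $\int_0^\infty \rho^{2\la-1}|f|^2\,d\rho = \int_0^\infty \rho^{2\la-2}|f|^2\,\rho\,d\rho$, which for $\la=\tfrac12$ is exactly $\|f(\cdot,\th)\|_{L^2(\rho\,d\rho)}^2$ and for $\la=1$ needs a Hardy-type bound $\int \rho^{-1}|f|^2\,\rho\,d\rho$... so here one instead keeps $\rho^{2\la-1}$ explicitly and bounds it by $\rho^{2\la-1}\les \langle\rho\rangle^{2\la-1}$ or simply uses that for $\la=1$ the weight $\rho^{2\la-1}=\rho$ appears and the term is $\int \rho|f||\d_\rho f|\,d\rho \le \|\rho\d_\rho f\|_{L^2(d\rho)}\|f\|_{L^2(d\rho)}\les \|r^{2\la-1}\d_r f\|_{L^2(dx)}\|f\|_{L^2(dx)}$ after reinserting the Jacobian. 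Squaring, taking square roots and integrating the $\th$-Sobolev embedding yields \eqref{rv}. For \eqref{rsi-f} the same computation is run with the weight $r^{1/2}\langle t-r\rangle^\la$ in place of $r^\la$: differentiating $r\langle t-r\rangle^{2\la}|f|^2$ in $r$ produces the three terms $\langle t-r\rangle^{2\la}|f|^2$, $r\langle t-r\rangle^{2\la-1}|f|^2$ (from $\d_r\langle t-r\rangle^{2\la}$, note $|\d_r\langle t-r\rangle|\le 1$ and $\langle t-r\rangle^{2\la-1}\le \langle t-r\rangle^{2\la}\cdot\langle t-r\rangle^{-1}$ absorbed into the first when $2\la-1\le 2\la$, or kept as the claimed $\langle t-r\rangle^{2\la-1}$ term), and $r\langle t-r\rangle^{2\la}|f||\d_r f|$; Cauchy--Schwarz against $\langle t-r\rangle\d_r f$ and $\langle t-r\rangle^{2\la-1}f$ then gives the right-hand side of \eqref{rsi-f}.

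\textbf{The divergence-free case.}
For \eqref{omf} the point is that for a divergence-free vector field on $\R^2$ one has the pointwise identity, in polar coordinates,
\[
\div f = \d_r(\om\cdot f) + \frac{1}{r}(\om\cdot f) + \frac{1}{r}\d_\th(\om^\perp\cdot f) = \frac{1}{r}\d_r\big(r\,\om\cdot f\big) + \frac{1}{r}\d_\th(\om^\perp\cdot f),
\]
so that $\div f=0$ gives $\d_r(r\,\om\cdot f) = -\d_\th(\om^\perp\cdot f)$. Writing $h(r,\th):=r\,(\om\cdot f)$, this means $\d_r h = -\d_\th(\om^\perp\cdot f)$, hence $\d_r(h^2) = -2h\,\d_\th(\om^\perp\cdot f)$, and integrating from $r$ to $\infty$ (again $h\to 0$ in the averaged sense since $f\in L^2$ with two angular derivatives controls decay),
\[
h(r,\th)^2 \le 2\int_r^\infty |h(\rho,\th)|\,|\d_\th(\om^\perp\cdot f)(\rho,\th)|\,d\rho
\le 2\int_0^\infty \rho|\om\cdot f|\cdot|\d_\th f|\,d\rho .
\]
Now apply Cauchy--Schwarz in $\rho$ and reinsert the Jacobian: the right-hand side is $\les \|f\|_{L^2(dx)}\|\Om f\|_{L^2(dx)}$ up to a further $\rho^{1/2}$ bookkeeping which, after one more integration in $\th$ and the one-dimensional Sobolev embedding in $\th$ (which costs one more $\Om$, hence $|\alpha|\le 2$ total), gives \eqref{omf}. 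The key structural input here is that the divergence-free condition converts the radial derivative of the ``good component'' $\om\cdot f$ into an \emph{angular} derivative of $f$, which is exactly what lets us control $r\,\om\cdot f$ by $L^2$ norms of $\Om$-derivatives of $f$ with \emph{no} radial derivatives — this is the genuinely nontrivial gain and the reason the statement is useful.

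\textbf{Main obstacle.}
The only delicate points are (a) justifying the boundary term $r^{2\la}|f|^2\to 0$ (resp. $h^2\to 0$) as $r\to\infty$ — this follows by a standard density/approximation argument since $f\in H^2(\R^2)$ (resp. $H^2$ with the divergence-free condition) makes $r\om\cdot f$ absolutely continuous in $r$ for a.e.\ $\th$ with an $L^1(d\rho)$ derivative, so the limit exists and must be $0$ for the norms to be finite — and (b) for the $\la=1$ case of \eqref{rv}, confirming that the weight $\rho^{2\la-1}=\rho$ lands correctly, i.e.\ that one really controls $\int \rho^{2\la-1}|f|^2\,d\rho = \int |f|^2\,\rho\,d\rho = \|f\|_{L^2(dx)}^2$ when $\la=1$ directly (no Hardy inequality needed), while for $\la=\tfrac12$ it is $\int |f|^2\,d\rho$ which is bounded by $\|f\|_{L^2(dx)}^2$ only after noting $\int_0^\infty|f|^2\,d\rho \le \int_0^1 |f|^2\,d\rho + \int_1^\infty |f|^2\rho\,d\rho$ and handling the inner region $\rho\le 1$ by the trace bound $\|f(\cdot,\th)\|_{L^\infty(0,1)}$ — or, more cleanly, by keeping the $\langle\rho\rangle$ weights throughout so that $\rho^{2\la-1}\les\langle\rho\rangle^{2\la-1}$ and no borderline integrability issue arises near $\rho=0$. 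I expect the write-up to consist of running the one-dimensional identity carefully once and then quoting it three times with the three different weight choices; there is no conceptual difficulty beyond the divergence-free identity in (b) above.
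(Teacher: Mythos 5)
Your overall strategy (polar coordinates, fundamental theorem of calculus in $r$, Cauchy--Schwarz, one-dimensional Sobolev in $\theta$) is the correct one for \eqref{rv} and \eqref{rsi-f} — the paper merely cites \cite{Si97} and \cite{LSZ} for these two — but there is a genuine gap in your treatment of the case $\lambda=\tfrac12$. You take absolute values inside the integral representation and are then left needing to bound $\int_0^\infty\rho^{2\lambda-1}|f(\rho,\theta)|^2\,d\rho$, which for $\lambda=\tfrac12$ is $\int_0^\infty|f(\rho,\theta)|^2\,d\rho$. This quantity is \emph{not} controlled by the right-hand side of \eqref{rv}: the obstruction is near $\rho=0$. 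Your proposed workaround — splitting at $\rho=1$ and invoking a ``trace bound'' $\|f(\cdot,\theta)\|_{L^\infty(0,1)}$ — is circular, since that $L^\infty$ quantity is precisely the sort of thing the lemma is supposed to bound; and replacing $\rho$ by $\langle\rho\rangle$ changes nothing near $\rho=0$. The correct observation is that this term has a definite, favorable sign: from
\begin{equation*}
 r^{2\lambda}|f(r,\theta)|^2 \;=\; -2\lambda\int_r^\infty\rho^{2\lambda-1}|f|^2\,d\rho \;-\; 2\int_r^\infty\rho^{2\lambda}\,\Re\big(\bar f\,\partial_\rho f\big)\,d\rho\,,
\end{equation*}
the first integral is nonnegative and appears with a minus sign, so it should simply be \emph{dropped}, not bounded. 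After this, Cauchy--Schwarz in $\rho$ (pairing $\rho^{2\lambda-1}\partial_\rho f$ against $f$ with respect to $\rho\,d\rho$) and the $\theta$-Sobolev embedding close the argument for both $\lambda=\tfrac12$ and $\lambda=1$.

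For \eqref{omf} your structural observation — that $\div f=0$ converts the radial derivative of $\omega\cdot f$ into an angular derivative of $f$ — is exactly the heart of the paper's proof, but the paper proceeds more economically: it applies the already-established \eqref{rv} with $\lambda=1$ to the scalar $g=\omega\cdot f$, commutes $r\partial_r\Omega^\alpha g=\Omega^\alpha(r\partial_r g)$ using $[\Omega,r\partial_r]=0$, and substitutes $r\partial_r(\omega\cdot f)=-\omega^\perp\cdot\Omega f$, which lands directly on $\sum_{|\alpha|\le 2}\|\Omega^\alpha f\|_{L^2}$. You instead re-run the one-dimensional FTC argument from scratch for $h=r\,\omega\cdot f$; this works in principle, but it forces you to repeat the $\theta$-Sobolev bookkeeping and the boundary-term discussion that were already carried out in the proof of \eqref{rv}. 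Reusing \eqref{rv} as a black box, as the paper does, is the cleaner route.
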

\begin{proof}
	The first two estimates  are standard. Readers can refer to \cite[Lemma 1]{Si97} and \cite[Lemma 3.3]{LSZ} for the proof. 
	
We prove the bound \eqref{omf}. By $\div f=0$ and the decomposition
	\begin{equation*}
		\nab=\frac{x}{r}\d_r+\frac{x^{\bot}}{r^2}\Om=\om\d_r+\frac{\om^\bot}{r}\d_\th\,,
	\end{equation*}
	we have
	\begin{equation*}
		0=\div\ f=\om\cdot \d_r f+\frac{x^{\bot}}{r^2}\Om \cdot f=\d_r(\om \cdot f)+\frac{x^{\bot}}{r^2}\Om \cdot f\,.
	\end{equation*}
	Combinning with \eqref{rv}, we obtain
	\begin{align*}
		\|r(\om\cdot f)\|_{L^\infty}&\lesssim \sum_{|\alpha|\leq 1}\big(\lV r\d_r\Om^{\alpha}(\om\cdot f)\rV_{L^2}+\lV\Om^{\alpha}(\om\cdot f)\rV_{L^2}\big)\\
		&=\sum_{|\alpha|\leq 1}\big(\lV \Om^{\alpha}r\d_r(\om\cdot f)\rV_{L^2}+\lV\Om^{\alpha}(\om\cdot f)\rV_{L^2}\big)\\
		&=\sum_{|\alpha|\leq 1}\big(\lV \Om^{\alpha}(\frac{x^{\bot}}{r}\Om\cdot f)\rV_{L^2}+\lV\Om^{\alpha}(\om\cdot f)\rV_{L^2}\big)\\
		&\lesssim \sum_{|\alpha|\leq 2}\lV \Om^{\alpha}f\rV_{L^2}
	\end{align*}
and \eqref{omf} follows.
\end{proof}

Next we derive the basic decay estimates for $D\phi$ in different regions.
\begin{lemma} \label{Decay-phi0}
	Let $t>4$. Then there holds
	\begin{align}     \label{r1/2phi}
		&\<r\>^{1/2}|D\Ga^\al\phi|\lesssim \|D\Ga^{\leq 2}\Ga^\al\phi\|_{L^2}\lesssim E_{|\al|+2}^{1/2}\,,\\\label{phi-lf}
		&\|\Om \Ga^\al\phi\|_{L^\infty}\les E_{|\al|+2}^{1/2}\,.
	\end{align}
    Moreover, when $r$ stays away from light cone: $r\leq \frac{2}{3}\<t\>$ or $r\geq \frac{5}{4}\<t\>$, there holds
	\begin{equation}  \label{AwayCone0}
		t|D\Ga^\al \phi|\lesssim \mathcal (E^{1/2}_{|\al|}+\XX^{1/2}_{|\al|+2})\ln^{1/2}(e+t) \,.
	\end{equation}
	When $r$ stays near the light cone: $\frac{\<t\>}{3}\leq r\leq \frac{5}{2}\<t\>$, there holds 
	\begin{equation}   \label{NearCone0}
		\<r\>^{1/2}\<t-r\>^{1/2}|D\Ga^\al\phi|\lesssim  \mathcal E^{1/2}_{|\al|+1}+\mathcal X^{1/2}_{|\al|+2}\,.
	\end{equation}
\end{lemma}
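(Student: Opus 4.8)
\textbf{Proof proposal for Lemma \ref{Decay-phi0}.}

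The plan is to derive all four displayed bounds from the already-established weighted $L^\infty$–$L^2$ inequalities \eqref{rv}–\eqref{rsi-f}, the definition of the generalized energy $E_N$, the weighted $L^2$-norm $\XX_N$, and elementary facts about the commutation of $\Om$ (and the full collection $\Ga$) with the d'Alembertian and with the splitting $\nab=\om\d_r+\frac{\om^\perp}{r}\d_\th$. First I would prove \eqref{r1/2phi} and \eqref{phi-lf}: apply \eqref{rv} with $\la=\tfrac12$ to $f=\d_\mu\Ga^\al\phi$ (each component $\mu=0,1,2$ of $D\Ga^\al\phi$), noting that $\Om$ commutes with $\d_\mu$ up to lower-order terms of the same type, so $r^{2\la-1}=r^0=1$ and the right side becomes $\sum_{|\beta|\le 2}\|\Om^\beta D\Ga^\al\phi\|_{L^2}\les E_{|\al|+2}^{1/2}$, since $\Om^\beta\Ga^\al$ is again of the form $\Ga^{\al'}$ with $|\al'|\le|\al|+2$ and $D\Ga^{\al'}\phi$ is controlled by $E_{|\al|+2}$. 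The bound \eqref{phi-lf} is the endpoint case of \eqref{rv} applied to $f=\Om\Ga^\al\phi$ with the $r^\la$ factor dropped (equivalently, it is the standard Sobolev embedding $\|g\|_{L^\infty}\les\sum_{|\beta|\le2}\|\Om^\beta g\|_{L^2}$ on $\R^2$), again rewriting $\Om^\beta\Om\Ga^\al = \Ga^{\al'}$.

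For the region away from the light cone, \eqref{AwayCone0}, the idea is that there $\<t-r\>\sim\<t\>$ (when $r\le\tfrac23\<t\>$ one has $\<t-r\>\gtrsim\<t\>$; when $r\ge\tfrac54\<t\>$ likewise), so the weight $\<t-r\>$ can be traded for $t$. I would apply \eqref{rsi-f} with $\la=\tfrac12$ to $f=\Ga^\al\phi$: the left side becomes $\|r^{1/2}\<t-r\>^{1/2}D\Ga^\al\phi\|_{L^\infty}$ — wait, more carefully, one applies it to each first derivative so that $D\Ga^\al\phi$ appears — giving control by $\sum_{|\beta|\le1}\|\<t-r\>\d_r\Om^\beta D\Ga^\al\phi\|_{L^2}+\|\Om^\beta D\Ga^\al\phi\|_{L^2}$. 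The second group is $\les E_{|\al|}^{1/2}$ after absorbing $\Om^\beta$. The first group contains a second derivative $\<t-r\>D^2\Ga^{\al'}\phi$ which in the away-from-cone zone is exactly what $\XX_{|\al'|+2}$ measures (recall \eqref{XX} uses $\<t-r\>^2|D^2\Ga^\al\phi|^2$ for $r\le2\<t\>$ and $\<t\>^2$ for $r>2\<t\>$, and $r\ge\tfrac54\<t\>$ forces $\<t-r\>\sim\<t\>$ anyway), so this is $\les\XX_{|\al|+2}^{1/2}$. Finally, dividing by $r^{1/2}\<t-r\>^{1/2}\sim t^{1/2}\cdot t^{1/2}=t$ on the support would give $t|D\Ga^\al\phi|$; the logarithmic factor $\ln^{1/2}(e+t)$ enters through a dyadic sum over $r$-shells (or a Hardy-type inequality with the $\<t-r\>^{-1}$ weight whose $L^2_{loc}$ norm over $r\lesssim t$ grows like $\ln^{1/2}t$), which is the standard loss when converting weighted $L^2$ control into pointwise decay in two dimensions.

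For the near-cone estimate \eqref{NearCone0}, with $\tfrac{\<t\>}{3}\le r\le\tfrac52\<t\>$, I would again use \eqref{rsi-f} with $\la=\tfrac12$ applied to (first derivatives of) $\Ga^\al\phi$: this directly yields $\|r^{1/2}\<t-r\>^{1/2}D\Ga^\al\phi\|_{L^\infty}\les\sum_{|\beta|\le1}\|\<t-r\>\d_r\Om^\beta D\Ga^\al\phi\|_{L^2}+\|\Om^\beta D\Ga^\al\phi\|_{L^2}$, and the two terms on the right are bounded by $\XX_{|\al|+2}^{1/2}$ (the weighted second-derivative term, now genuinely in the $r\le2\<t\>$ branch where $\<t-r\>$ is the right weight — for $r$ up to $\tfrac52\<t\>$ one may need the trivial bound $\<t-r\>\lesssim\<t\>$ to land in the $r>2\<t\>$ branch of \eqref{XX}) and $E_{|\al|+1}^{1/2}$ respectively; no logarithmic loss appears because we keep the full $\<t-r\>^{1/2}$ weight on the left rather than trading it for $t^{1/2}$.

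The main obstacle I expect is bookkeeping the commutators and making sure the vector-field counts close: applying $\d_r$ or $\Om$ to $\Ga^\al\phi$ is not literally applying a $\Ga$, so one must verify that $\d_r\Om^\beta D\Ga^\al\phi$ is, after expanding $\d_r=\om\cdot\nab$ and commuting, a combination of $D^2\Ga^{\al'}\phi$ with $|\al'|\le|\al|+2$ whose coefficients (powers of $x/r$) are harmless under the $\<t-r\>$ and $r^{1/2}$ weights, and similarly that the lower-order terms generated by $[\Om,\d_\mu]$ do not spoil the index count — exactly the reason the statement asks for $E_{|\al|+2}$ and $\XX_{|\al|+2}$ rather than $E_{|\al|}$. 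A secondary technical point is the precise origin of the $\ln^{1/2}(e+t)$ in \eqref{AwayCone0}: one should pin down whether it comes from a dyadic decomposition of the spatial annulus $r\sim 2^k$ with $k$ ranging up to $\log t$, each piece contributing an $O(1)$ bound, giving $\ln^{1/2}$ after Cauchy–Schwarz in $k$; this is routine but must be stated carefully so the constant is uniform in $t\le e^{C_T/\ep}$.
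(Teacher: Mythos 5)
Two of your four bounds are fine, but the proofs you sketch for \eqref{phi-lf} and \eqref{AwayCone0} contain genuine gaps.

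For \eqref{phi-lf}, the Sobolev embedding you invoke --- $\|g\|_{L^\infty}\lesssim\sum_{|\beta|\le2}\|\Om^\beta g\|_{L^2}$ on $\R^2$ --- is false: take $g$ a radial bump of height $1$ supported on $r\sim\varepsilon$; then every $\Om^\beta g$ with $|\beta|\ge1$ vanishes, $\|g\|_{L^2}\to0$ as $\varepsilon\to0$, yet $\|g\|_{L^\infty}=1$. Nor does ``dropping the $r^\la$ factor'' in \eqref{rv} make sense, since the lemma is stated only for $\la\in\{1/2,1\}$ and the right-hand weight $r^{2\la-1}$ changes with $\la$. The paper's actual argument is different and crucial: it writes $\Om\Ga^\al\phi=r\cdot\bigl(\frac{\d_\th}{r}\Ga^\al\phi\bigr)$ so that the function inside is the \emph{angular component of} $\nab\Ga^\al\phi$, then applies \eqref{rv} with $\la=1$. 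This produces terms like $\|\d_r(r\frac{\d_\th}{r}\Om^\de\Ga^\al\phi)\|_{L^2}=\|\d_r\Om^{\de+1}\Ga^\al\phi\|_{L^2}$ and $\|\frac{\d_\th}{r}\Om^\de\Ga^\al\phi\|_{L^2}\le\|\nab\Om^\de\Ga^\al\phi\|_{L^2}$, both of which are $\lesssim E^{1/2}_{|\al|+2}$ because each is a first spatial derivative of at most two extra vector fields applied to $\phi$. The point is that one must trade the extra $\Om$ for a $\nab$ (via $\frac{\d_\th}{r}$) before using the weighted embedding; your version tries to go directly to $L^\infty$ with angular derivatives only, which doesn't work.

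For \eqref{AwayCone0}, your plan of applying \eqref{rsi-f} and then ``dividing by $r^{1/2}\<t-r\>^{1/2}\sim t$ on the support'' fails near the origin: on $r\le\frac23\<t\>$ one has $\<t-r\>\sim t$, but $r$ can be arbitrarily small, so $r^{1/2}\<t-r\>^{1/2}\sim r^{1/2}t^{1/2}\not\sim t$. The weighted $L^\infty$--$L^2$ estimates of Sideris type degenerate at $r=0$, which is precisely why they only give the near-cone bound \eqref{NearCone0}, not a pointwise bound on the whole interior region. Your suggestion that the $\ln^{1/2}(e+t)$ comes from dyadic $r$-shells is not the right mechanism. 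The paper instead multiplies $D\Ga^\al\phi$ by a cutoff supported away from the cone and applies a Littlewood--Paley/Bernstein inequality of the form
\begin{equation*}
\|f\|_{L^\infty}\lesssim\ln^{1/2}(e+t)\|\nab f\|_{L^2}+\<t\>^{-1}\bigl(\|f\|_{L^2}+\|\nab^2 f\|_{L^2}\bigr)\,,
\end{equation*}
where the logarithm arises from Cauchy--Schwarz over the $\mathcal{O}(\log t)$ intermediate frequency bands $2^{-\log t}\lesssim 2^k\lesssim1$. With the cutoff in place, $\<t-r\>\gtrsim\<t\>$ on the support, so $\|\nab f\|_{L^2}\lesssim\<t\>^{-1}\XX^{1/2}_{|\al|+2}$ via the weighted second-derivative control, and the remaining terms are handled by $E^{1/2}_{|\al|}$ and $\<t\>^{-1}\XX^{1/2}$, yielding \eqref{AwayCone0} after multiplying by $t$. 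This is a frequency-side argument, not a physical-space dyadic sum, and it is essential for the two-dimensional problem; you should replace your sketch with this.

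Your treatment of \eqref{r1/2phi} (direct from \eqref{rv}, $\la=1/2$) and \eqref{NearCone0} (direct from \eqref{rsi-f}, $\la=1/2$, with $\<t-r\>\d_r\Om^\beta D\Ga^\al\phi$ controlled by $\XX^{1/2}_{|\al|+2}$ and the unweighted terms by $E^{1/2}_{|\al|+1}$) matches the paper's strategy and is essentially correct, modulo the routine commutator bookkeeping you correctly flag.
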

\begin{proof}
	The first bound \eqref{r1/2phi} follows from \eqref{rv}.
	For \eqref{NearCone0}, readers can refer to \cite[Lemma 1]{Si97}. It remains to prove the bound \eqref{phi-lf} and \eqref{AwayCone0}. 
	
	Due to the expression of $\Om=\d_\theta$ and \eqref{rv}, we have
	\begin{align*}
		\|\Om \Ga^\al\phi\|_{L^\infty}&=\big\|r\big(\frac{\d_\th}{r} \Ga^\al\phi\big)\big\|_{L^\infty}\\
		&\les \sum_{|\de|\leq 1}\big(\big\|r\d_r\Om^{\de}\big(\frac{\d_\th}{r}\Ga^\al\phi\big)\big\|_{L^2}+\big\|\Om^\de \big(\frac{\d_\th}{r}\Ga^\al\phi \big)\big\|_{L^2}\big)\\
		&\les \sum_{|\de|\leq 1}\big(\big\|\d_r \big(r\frac{\d_\th}{r}\Om^{\de}\Ga^\al\phi \big)\big\|_{L^2}+\big\|\frac{\d_\th}{r}\Om^\de\Ga^\al\phi \big\|_{L^2}\big)\\
		&\les \sum_{|\de|\leq 1}\big(\|\d_r(\Om^{\de+1}\Ga^\al\phi)\|_{L^2}+\|\nab\Om^\de\Ga^\al\phi\|_{L^2}\big)\\
		&\les E_{|\al|+2}^{1/2}\,.
	\end{align*}
Then the bound \eqref{phi-lf} is obtained.

Finally, multiplying $D\Ga^\al\phi$ by a smooth radial cut-off function supported in $r<3\<t\>/4,\ r>6\<t\>/5$, then the bound \eqref{AwayCone0} follows from \eqref{XX} and the estimate
\begin{align*}
	\|f\|_{\Lf}\les \ln^{1/2}(e+t)\|\nab f\|_{L^2}+\<t\>^{-1}(\|f\|_{L^2}+\|\nab^2 f\|_{L^2})\,,
\end{align*}
which is easily obtained by Littlewood-Paley decomposition and Bernstein's inequality. For the detail, we can refer to \cite[Lemma 3.1]{Lei2016}.
\end{proof}

\medskip 

We also need the decay estimates of the second-order derivatives of angle in the $L^\infty$ norm. 

\begin{lemma} \label{Decay-phi}
	Let $t>4$. Then for $r$ away from light cone: $r\leq \frac{2}{3}\<t\>$ or $r\geq \frac{5}{4}\<t\>$, there hold
	\begin{equation}  \label{AwayCone}
		t|D^2\Ga^\al \phi|\lesssim \mathcal X^{1/2}_{|\al|+3}\,.
	\end{equation}
    For $r$ near the light cone: $r\leq \frac{5}{2}\<t\>$, there holds 
    \begin{equation}   \label{NearCone}
    	\<r\>^{1/2}\<t-r\>|D^2\Ga^\al\phi|\lesssim  E^{1/2}_{|\al|+2}+ \mathcal X^{1/2}_{|\al|+3}\,.
    \end{equation}
\end{lemma}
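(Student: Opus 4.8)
\textbf{Proof proposal for Lemma \ref{Decay-phi}.}

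The plan is to reduce the second-order estimates \eqref{AwayCone} and \eqref{NearCone} to the definition \eqref{XX} of $\XX_N$ together with the weighted Sobolev embeddings \eqref{rv} and \eqref{rsi-f}, exactly as was done for the first-order bounds in Lemma \ref{Decay-phi0}. The key point is that $\XX_{|\al|+3}$ already controls, in $L^2$, the quantity $\<t-r\>|D^2\Ga^\be\phi|$ for $|\be|\le |\al|+2$ on $r\le 2\<t\>$ and $\<t\>|D^2\Ga^\be\phi|$ on $r>2\<t\>$; one extra rotation field $\Om$ and the radial weight in \eqref{rv}/\eqref{rsi-f} then upgrade this to the pointwise $L^\infty$ bounds with the stated weights in $\<r\>$.

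First I would treat the exterior/interior region $r\le \frac23\<t\>$ or $r\ge \frac54\<t\>$, where $\<t-r\>\approx\<t\>\approx t$. Apply \eqref{rv} with $\la=\frac12$ to the function $f=\chi\, D^2\Ga^\al\phi$, where $\chi$ is a smooth radial cutoff adapted to this region (so that $t|\nab\chi|\lesssim 1$). This gives
\begin{align*}
	t\,\<r\>^{1/2}|D^2\Ga^\al\phi| &\lesssim t\sum_{|\delta|\le1}\big(\|\d_r\Om^\delta f\|_{L^2}+\|\Om^\delta f\|_{L^2}\big)\,,
\end{align*}
and since on the support of $\chi$ one has $t\lesssim\<t-r\>$ (interior) or $t\approx\<t\>$ (exterior), each term is bounded by $\sum_{|\be|\le|\al|+3}\big(\int_{r>2\<t\>}\<t\>^2|D^2\Ga^\be\phi|^2 + \int_{r\le2\<t\>}\<t-r\>^2|D^2\Ga^\be\phi|^2\big)^{1/2}\lesssim\XX_{|\al|+3}^{1/2}$, after commuting $\d_r,\Om$ past $\chi$ and past $D^2$ (commutators of $\Om$ with $D$ and with $\Ga$ produce only terms of the same or lower order, absorbed in the sum over $|\be|\le|\al|+3$). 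This proves \eqref{AwayCone}. For the near-cone region $r\le \frac52\<t\>$, I would instead invoke \eqref{rsi-f} with $\la=1$, applied to $f=D^2\Ga^\al\phi$: this directly yields
\begin{align*}
	\<r\>^{1/2}\<t-r\>|D^2\Ga^\al\phi|\lesssim\sum_{|\de|\le1}\big(\|\<t-r\>\d_r\Om^\de D^2\Ga^\al\phi\|_{L^2}+\|\<t-r\>\Om^\de D^2\Ga^\al\phi\|_{L^2}\big)\,,
\end{align*}
and the right-hand side is $\lesssim E^{1/2}_{|\al|+2}+\XX^{1/2}_{|\al|+3}$: the weighted-$L^2$ part where $\<t-r\>\le$ const is harmless and contributes the $E$-term (plain $L^2$ of $D^2\Ga^{\le}\phi$, which is controlled by $E_{|\al|+2}$ after noting $\d_t^2\Ga^\be\phi$ is rewritten via the equation $\d_t^2\Ga^\be\phi=\De\Ga^\be\phi+g_\be$ and $g_\be$ is lower order in $\phi$-derivatives), while the region $r\le 2\<t\>$ gives the $\XX$-term with the weight $\<t-r\>$, and the sliver $2\<t\><r\le\frac52\<t\>$ has $\<t-r\>\approx\<t\>$ so it is covered by the $\int_{r>2\<t\>}\<t\>^2|\cdot|^2$ piece of \eqref{XX}.

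The main technical obstacle I anticipate is \emph{not} the embedding but the bookkeeping around $\d_t^2\Ga^\al\phi$ when passing between $D^2\Ga^\al\phi$ (which includes two time derivatives) and the quantities actually controlled by $E_N$ and $\XX_N$: one must use the $\phi$-equation in \eqref{Main_Sys_VecFie}, $\d_t^2\Ga^\al\phi=\De\Ga^\al\phi-g_\al$, to trade the worst second time derivative for spatial second derivatives plus the nonlinearity $g_\al$ in \eqref{gal}, and then check that $g_\al$ (being quadratic/cubic with at most one derivative hitting any single $\phi$ factor beyond first order, and with $u$ enjoying the decay from Section \ref{sec-decay}) is genuinely a lower-order contribution compatible with the stated right-hand sides under the bootstrap assumption \eqref{Main_Prop_Ass1}. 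A secondary nuisance is verifying that the cutoff commutators and the $[\,\Om, D\,]$, $[\,\Om,\Ga\,]$ commutators only cost indices already budgeted for (hence the $+3$ rather than $+2$ in \eqref{AwayCone}); this is routine but must be stated carefully.
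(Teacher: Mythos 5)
Your strategy is correct and essentially the one the paper uses: the paper's proof of this lemma is very terse, simply invoking ``Sobolev embeddings and \eqref{XX}'' for \eqref{AwayCone} and ``\eqref{rsi-f} with $\la=1$'' for \eqref{NearCone} (citing Lei's Lemma~3.2 for details), and your two estimates match this — you use \eqref{rv} with $\la=\tfrac12$ (after a cutoff) for the away-from-cone bound, which if anything gives a stronger conclusion with an extra $\<r\>^{1/2}$ gain, and you use \eqref{rsi-f} with $\la=1$ for the near-cone bound. Your accounting of the $+3$ in the index (one $\Om$, one $\d_r$, on top of $D^2$) and your explanation of the $E^{1/2}_{|\al|+2}$ term (the region of bounded $\<t-r\>$ near the cone, plus cutoff commutators, where the $\XX$-weight does not help and one falls back on the unweighted $L^2$ norm) are both consistent with what the cited lemma of Lei provides.

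The one thing I would push back on is the ``main technical obstacle'' you anticipate. You do \emph{not} need to use the $\phi$-equation to trade $\d_t^2\Ga^\al\phi$ for $\De\Ga^\al\phi - g_\al$, and no bootstrap input on $g_\al$ enters this lemma at all. The reason is built into the definitions: $D=\nab_{t,x}$ and $\d_t$ is one of the admissible fields $Z\subset\Ga$, so $D^2\Ga^\al\phi$ is already of the form $\nab_{t,x}\Ga^\be\phi$ with $|\be|\le|\al|+1$, which is controlled (unweighted) by $E_{|\al|+1}$; likewise $\XX_{|\al|+3}$ by construction controls the weighted $L^2$ of $D^2\Ga^\be\phi$ including all time derivatives for $|\be|\le|\al|+2$. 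That is precisely why the lemma's right-hand side is a clean combination of $E$ and $\XX$ with no $\ep$-smallness or nonlinear correction — the statement is a pure weighted Sobolev embedding from the generalized energies, and any argument that routed through the equation would be both unnecessary and out of place here (the nonlinearity $g_\al$ is dealt with in Lemma~\ref{Boxphi}, not in this lemma). Dropping that detour makes your write-up shorter and matches the actual structure of the paper's argument.
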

\begin{proof}
The first bound \eqref{AwayCone} is obtained by Sobolev embeddings and \eqref{XX}. The second bound \eqref{NearCone} is a consequence of \eqref{rsi-f} with $\la=1$. We can refer to \cite[Lemma 3.2]{Lei2016} for the detail.
\end{proof}

\medskip
\subsection{Decay estimates of velocity \texorpdfstring{$u$}{}}
Here we prove some decay estimates of velocity $u$. First, we prove the basic decay estimates of $\Ga^\al u$ for any $|\al|\leq N-2$ by using Duhamel's formula and the decays of $\nab\Ga^\al\phi$. They play an important role in the proof of other decay estimates. Second, the decay estimates of $\d_t \Ga^\al u$ and $\nab\Ga^\al u$ in $L^2$ are provided. Finally, we prove the decay of $\nab\Ga^\al u$ in $\Lf$ which, combined with the basic decay estimates, yields the decay of $\nab^2 \Ga^\al u$. And hence we also obtain the decay of $\d_t \Ga^\al u$ in $\Lf$.  

The decay estimates are as follows:

\begin{proposition}[Decay estimates of $u$]\label{Dec-lem}
Let $N\geq 10$ and $\de=1/100$. Assume that $(u,\phi)$ is the solution of \eqref{Main_Sys} satisfying \eqref{MainAss_dini} and \eqref{Main_Prop_Ass1}. Then the following decay estimates hold:

i) (Basic decay estimates) For any $|\al|\leq N-2$, we have
\begin{align}   \label{Decay-u}
\|\<\nab\>^{(N-|\al|)\de}\Ga^{\al} u\|_{L^\infty}&\lesssim \<t\>^{-\frac{1}{2}}\ln^{|\al|}(e+t)(E^{1/2}_{|\al|+2}+\XX^{1/2}_{|\al|+2})\,.
\end{align}

ii) (Decays in $L^2$) For any $|\al|\leq N-1$, we have
\begin{align}\label{dtu-L2al}
	\|\d_t\Ga^\al u\|_{L^2}&\lesssim \|\nab\Ga^{ |\al|+1}u\|_{L^2}(1+E^{1/2}_{|\al|+1})+ \<t\>^{-\frac{1}{2}}(E^{1/2}_{[|\al|/2]+2}+\XX^{1/2}_{[|\al|/2]+3})\,,
\end{align}
and for any $|\al|\leq N-2$, we have
\begin{align}\label{du-L2}
	\|\<\nab\>^{(N-|\al|)\de}\nab\Ga^\al u\|_{L^2}\lesssim \<t\>^{-\frac{1}{2}}\ln^{|\al|+1}(e+t)(E^{1/2}_{|\al|+2}+\XX^{1/2}_{|\al|+2})\,.
\end{align}
As a consequence of these estimates, for any $|\al|\leq N-3$, we have
\begin{equation}	\label{dtu-L2}
	\|\d_t\Ga^\al u\|_{L^2}\lesssim \<t\>^{-\frac{1}{2}}\ln^{|\al|+2}(e+t)(E^{1/2}_{|\al|+3}+\XX^{1/2}_{|\al|+3})\,.
\end{equation}

iii) (Decays in $\Lf$) For any $|\al|\leq N-3$, we have
\begin{align}\label{decay-du0}
	&\|\<\nab\>^{(N-|\al|)\de}\nab \Ga^\al u\|_{L^\infty}\lesssim \<t\>^{-1}\ln^{|\al|+1}(e+t) (E^{1/2}_{|\al|+3}+\XX^{1/2}_{|\al|+3}) \,.
\end{align}
For any $|\al|\leq N-4$, we have the improved estimates
\begin{align}\label{decay-d2Gau}
	\|\nab^{2} \Ga^\al u\|_{L^\infty}\lesssim \<t\>^{-1} (E^{1/2}_{|\al|+4}+\XX_{|\al|+4}^{1/2})\,.
\end{align}
From this estimate, there holds for any $|\al|\leq N-4$ that
\begin{align}\label{decay-dtu}
	\|\d_t \Ga^\al u\|_{L^\infty}\lesssim \<t\>^{-1} (E^{1/2}_{|\al|+4}+\XX_{|\al|+4}^{1/2})\,.
\end{align}
\end{proposition}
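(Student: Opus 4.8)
\textbf{Proof strategy for Proposition \ref{Dec-lem}.}

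The plan is to treat the four items in order, bootstrapping earlier bounds into later ones. For item (i), I would begin with Duhamel's formula applied to the heat equation for $\Ga^\al u$: since $\Ga^\al u$ solves $\d_t \Ga^\al u - \De(S-1)^{\al_1}Z^{\al'}u = f_\al$ (modulo the pressure term, which is handled by the Leray projection $\mathbb{P}$ since $\div\,\Ga^\al u=0$), we write
\begin{align*}
\Ga^\al u(t) = e^{t\De}\Ga^\al u_0 + \int_0^t e^{(t-s)\De}\mathbb{P} f_\al(s)\,ds + (\text{lower-order commutator terms}).
\end{align*}
The linear term is controlled by the $L^1\to L^\infty$ decay $\|e^{t\De}g\|_{L^\infty}\les t^{-1}\|g\|_{L^1}$ together with Sobolev embedding on the data; the key is the Duhamel term, where the quadratic nonlinearity $f_\al$ is split according to \eqref{fal} into $\Ga^\be u\cdot\nab\Ga^\ga u$ and $\d_j(\Gamma^\be\phi\cdot\d_j\Gamma^\ga\phi)$. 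For the latter, I would exploit that $\nab_{t,x}\Ga^\ga\phi$ enjoys the weighted pointwise bounds of Lemmas \ref{Decay-phi0}--\ref{Decay-phi}, so that near the light cone the $\<t-r\>$ weights are absorbed and away from it one has genuine $t^{-1}$ decay; the low-high/high-low paraproduct decomposition puts the derivative-counting burden on the lower-order factor, which is estimated in $L^\infty$ by \eqref{r1/2phi}, \eqref{AwayCone0}, \eqref{NearCone0}. The accumulation of one $\ln(e+t)$ factor per vector field (hence $\ln^{|\al|}$) comes from the standard logarithmic loss when integrating $\int_0^t (t-s)^{-1/2}\<s\>^{-1}\,ds$-type kernels against borderline-decaying nonlinearities; the extra $\<\nab\>^{(N-|\al|)\de}$ weight is carried through using that Bessel-potential smoothing commutes with $e^{t\De}$ and that we have $N-|\al|\ge 2$ spare derivatives.

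For item (ii), the $L^2$ bounds: \eqref{dtu-L2al} follows by reading $\d_t\Ga^\al u$ off the first equation in \eqref{Main_Sys_VecFie}, namely $\d_t\Ga^\al u = \De(S-1)^{\al_1}Z^{\al'}u - \nab\Ga^\al p + f_\al$, then estimating the viscous term by $\|\nab\Ga^{|\al|+1}u\|_{L^2}$ (after commuting $S$ through $\De$, which costs lower-order terms bounded by $E^{1/2}$), estimating $\nab\Ga^\al p$ via the elliptic pressure equation $-\De\Ga^\al p = \div\Ga^\al(u\cdot\nab u + \div(\nab\phi\odot\nab\phi)) + \text{comm.}$ and the Calder\'on--Zygmund bound, and estimating $f_\al$ by splitting into high and low frequency parts so the worse factor always lands in $L^\infty$ with decay $\<t\>^{-1/2}$ from \eqref{Decay-u} or \eqref{r1/2phi}. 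The bound \eqref{du-L2} is an interpolation/Sobolev consequence of \eqref{Decay-u}: from $\nab\Ga^\al u\in L^2$ via the energy $E_N$ and $\Ga^\al u\in L^\infty$ with the stated decay, a Gagliardo--Nirenberg interpolation (or a Littlewood--Paley argument distributing $\<\nab\>^{(N-|\al|)\de}$) yields $L^2$ decay at the same rate with one extra $\ln$. Then \eqref{dtu-L2} is obtained by substituting \eqref{du-L2} back into \eqref{dtu-L2al}, which forces $|\al|\le N-3$ to keep two derivatives in reserve and accounts for the $\ln^{|\al|+2}$.

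For item (iii), the $L^\infty$ decays of derivatives: \eqref{decay-du0} is again Duhamel on $\nab\Ga^\al u$ combined with the Calder\'on--Zygmund boundedness of $\nab^2(-\De)^{-1}$ to gain one full derivative decay, the $\<t\>^{-1}$ rate coming from $\|e^{t\De}\nab\mathbb{P} g\|_{L^\infty}\les t^{-1}\|g\|_{L^1\cap L^\infty}$-type kernels integrated against the quadratic nonlinearity whose factors now decay like $\<t\>^{-1/2}$ each; the improved \eqref{decay-d2Gau} without the log loss uses that two derivatives on $u$ let the nonlinearity $f_\al$ be estimated purely in $L^2\times L^\infty$ (no borderline $L^1$ integration in time), trading the log for one extra order of regularity $|\al|\le N-4$; and \eqref{decay-dtu} is immediate from the $u$-equation $\d_t\Ga^\al u = \De\Ga^\al u + \cdots$ since the right side is now pointwise $O(\<t\>^{-1})$ by \eqref{decay-d2Gau} and item (i). The main obstacle throughout is the quadratic term $\d_j(\nab\phi\cdot\d_j\phi)$ (equivalently the second-order-material-derivative contributions once $u$ feeds back into the $\phi$-equation): its contribution to the Duhamel integral only marginally converges in time, so one must use the sharp region-by-region pointwise bounds on $D\phi$ and $D^2\phi$ --- in particular the gain of $\<t-r\>^{-1}$ near the light cone from $\XX$ and the full $t^{-1}$ away from it --- rather than crude Sobolev bounds, and carefully track that the $\ln$-losses accumulate only linearly in $|\al|$. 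Closing this without losing a power of $t$ (so that the velocity decay in \eqref{Decay-u} is genuinely $\<t\>^{-1/2}$, independent of the time interval) is the crux of the argument.
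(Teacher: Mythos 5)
Your outline for the overall strategy (Duhamel's formula for the heat equation, region-by-region decay bounds on $D\phi$ and $D^2\phi$, induction on $|\alpha|$ for the lower-order commutators, fractional derivatives to sharpen $\nab^2\Ga^\al u$) matches the paper, but there are two genuine gaps.

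First, and most importantly, in item (i) you never explain how to handle the self-interaction $\Ga^\be u\cdot\nab\Ga^\ga u$ on the time interval $[t/2,t]$. Your paraproduct remark only invokes the pointwise bounds of Lemmas \ref{Decay-phi0}--\ref{Decay-phi}, which concern $\phi$; but after moving one derivative onto the heat kernel, the contribution of $\Ga^\be u\cdot\nab\Ga^\ga u$ is controlled by $\|s^{1/2}\Ga^\be u\|_{L^\infty}\|s^{1/2}\Ga^\ga u\|_{L^\infty}$, which is precisely the quantity one is trying to bound. The paper's argument is deliberately \emph{circular and then closed}: one arrives at an inequality of the schematic form $\|t^{1/2}u\|_{L^\infty L^\infty}\les \ep + \|t^{1/2}u\|_{L^\infty L^\infty}^2$, which is then closed by a continuity/Banach--fixed--point argument exploiting the smallness of $\ep$, and at the top orders $|\al|\in\{N-3,N-2\}$ by absorbing the linear-in-the-unknown piece. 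Without explicitly invoking this mechanism --- emphasized in the paper's introduction as a key point --- your sketch does not close item (i), and since all of items (ii) and (iii) are downstream of \eqref{Decay-u}, this is the load-bearing gap.

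Second, your proposed proof of \eqref{du-L2} by Gagliardo--Nirenberg interpolation between the $L^\infty$ decay of $u$ and the energy bound does not work in two dimensions: the interpolation inequalities that produce $\|\nab u\|_{L^2}$ from $\|u\|_{L^\infty}$ and higher Sobolev norms need an $L^1$ (or other low-integrability) control of $u$ that is not available here; with only $\|u\|_{L^2}\les E^{1/2}$ and $\|u\|_{L^\infty}\les\ep\<t\>^{-1/2}$ the naive Gagliardo--Nirenberg route gives no decay at all for $\|\nab u\|_{L^2}$. The paper instead derives \eqref{du-L2} by a separate Duhamel iteration on $\nab\Ga^\al u$, again closed by absorbing the $u\cdot\nab u$ contribution via the smallness of $E^{1/2}_N+\XX^{1/2}_{N-2}$. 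You should replace the interpolation argument with this parallel Duhamel argument.
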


\bigskip 
We will prove these estimates step by step. In the proofs,  the decay estimates of heat flow will be needed, which is standard,
	\begin{gather}        \label{Dec_heat}
		\lV e^{t\De}|\nab|^k f\rV_{L^q}\lesssim t^{-\frac{n}{2}(\frac{1}{p}-\frac{1}{q})-\frac{k}{2}} \lV f\rV_{L^p}\,,\qquad  1\leq p\leq q\leq \infty\,.
	\end{gather}
We also need the following Duhamel's formula
\begin{equation}     \label{Duhamel}
	\begin{aligned}
		\Ga^{\al}u(t)
		&=e^{t\De}\Ga^\al u(0)+\int_0^t e^{(t-s)\De}\Big[\De\sum_{\be_1=0}^{\al_1-1}C_{\al_1}^{\be_1}(-1)^{\al_1-\be_1}S^{\be_1}\Ga^{\al'}u(s)\\
		&\quad -\P\sum_{\be+\ga=\al}C_\al^\be[ \Ga^\be u\cdot\nab \Ga^\ga u+ \d_j(\nab\Gamma^{\be}\phi\cdot\d_j\Gamma^{\ga}\phi)]\Big]\ ds\,,
	\end{aligned}
\end{equation}
which is derived from the equations of $\Ga^\al u$ in \eqref{Main_Sys_VecFie}.
First we prove the basic decay estimates \eqref{Decay-u}.

\bigskip 
\begin{proof}[Proof of the estimate \eqref{Decay-u}]
	\ 
	
	Due to the presence of lower order terms in $\Ga^\al u$-equations \eqref{Main_Sys_VecFie}, we prove \eqref{Decay-u} by induction. Assume that for any $|\be|<|\al|$,
	\begin{align}\label{u-ass1}
		\<t\>^{1/2}\ln^{-|\be|}(e+t)\|\<\nab\>^{(N-|\be|)\de}\Ga^\be u\|_{L^\infty}\lesssim  E^{1/2}_{|\be|+2}+\XX^{1/2}_{|\be|+2}\,.
	\end{align}
	
	Applying $\<\nab\>^{(N-|\al|)\de}$ to the Duhamel's formula \eqref{Duhamel}, we have
	\begin{align*}
		\|t^{1/2}\<\nab\>^{(N-|\al|)\de} \Ga^\al u\|_{\Lf}\les  \KK_0+\KK_1+\KK_2\,,
	\end{align*}
	where 
	\begin{align*}
		\KK_0=&\|t^{1/2}e^{t\De}\<\nab\>^{(N-|\al|)\de}\Ga^\al u(0)\|_{\Lf}\,,\\
		\KK_1=&\sum_{|\be|<|\al|}\|t^{1/2}\int_0^t e^{(t-s)\De}\<\nab\>^{(N-|\al|)\de}\De\sum_{\be_1=0}^{\al_1-1}\Ga^{\be}u(s)\ ds\|_{\Lf}\,,\\
		\KK_2=&\sum_{\be+\ga=\al}\|t^{1/2}\int_0^{t}e^{(t-s)\De}   \<\nab\>^{(N-|\al|)\de}\P\big(\Ga^\be u \cdot\nab \Ga^\ga u+\nab(\nab\Ga^\be\phi \nab\Ga^\ga\phi)\big)(s)\ ds\|_{\Lf}\,.
	\end{align*}
	By \eqref{Dec_heat} the first term $\KK_0$ is bounded by
	\begin{equation*}
		\KK_0\lesssim (1+\<t\>^{-(N-|\al|)\de/2})\|\Ga^\al u(0)\|_{H^2}\lesssim E_{|\al|+2}^{1/2}(0)\,.
	\end{equation*}
	For the term $\KK_1$, by \eqref{Dec_heat} and \eqref{u-ass1} we have
	\begin{equation}   \label{KK1}
		\begin{aligned}
			\KK_1\lesssim& \int_0^{t/2} (t-s)^{-1}(1+\<t-s\>^{-(N-|\al|)\de/2})\|\Ga^\be u\|_{L^2} ds\\
			&+\int_{t/2}^{t-1} (t-s)^{-1}(1+\<t-s\>^{-(N-|\al|)\de/2})\|s^{1/2}\nab\Ga^\be u\|_{L^2} ds\\
			&+\int_{t-1}^t (t-s)^{-1+\de/2}\|s^{1/2}\<\nab\>^{(N-|\al|+1)\de}\nab\Ga^\be u\|_{L^2} ds\\
			\lesssim&\  E_{|\be|}^{1/2}+\ln^{|\be|+1}(e+t)(E^{1/2}_{|\be|+2}+\XX^{1/2}_{|\be|+2})\,.
		\end{aligned}
	\end{equation}
	
	Next, the term $\KK_2$ is divided to three parts
	\begin{align*}
		\KK_2\les& \sum_{\be+\ga=\al}\|t^{1/2}\int_0^{t/2}e^{(t-s)\De}   \<\nab\>^{(N-|\al|)\de}\P\big(\Ga^\be u \cdot\nab \Ga^\ga u+\nab(\nab\Ga^\be\phi \nab\Ga^\ga\phi)\big)(s)\ ds\|_{\Lf}\\
		&+\sum_{\be+\ga=\al}\|t^{1/2}\int_{t/2}^t e^{(t-s)\De}   \<\nab\>^{(N-|\al|)\de}\P(\Ga^\be u \cdot\nab \Ga^\ga u)(s)\ ds\|_{\Lf}\\
		&+\sum_{\be+\ga=\al}\|t^{1/2}\int_{t/2}^t e^{(t-s)\De}  \<\nab\>^{(N-|\al|)\de}\P\nab(\nab\Ga^\be \phi \nab\Ga^\ga\phi)(s)\ ds\|_{\Lf}\\
		:=&\ \KK_{21}+\KK_{22}+\KK_{23}\,,
	\end{align*}

	We use \eqref{Dec_heat} to bound $\KK_{21}$ by
    \begin{equation}    \label{K21}
    	\begin{aligned}
    		\KK_{21} 
    		\lesssim& \  \sum_{|\be+\ga|\leq |\al|}\int_0^{t/2}(t-s)^{-1}(1+(t-s)^{-(N-|\al|)\de/2}) (\|\Ga^\be u\cdot \Ga^\ga u\|_{L^1} +\|\nab\Ga^\be\phi \cdot\nab\Ga^\ga\phi\|_{L^1})\ ds\\
    		\lesssim&\  \sum_{|\be+\ga|\leq |\al|} (\|\Ga^\be u\cdot\Ga^\ga u\|_{L^\infty L^1} +\|\nab\Ga^\be\phi \cdot\nab\Ga^\ga\phi\|_{L^\infty L^1})\\
    		\lesssim&\  E^{1/2}_{|\be|}E^{1/2}_{|\ga|}\,.
    	\end{aligned}
    \end{equation}
	For the term $\KK_{22}$, by H\"older's inequality and \eqref{u-ass1} we have
	\begin{align*}
		\KK_{22} &\lesssim \sum_{|\be+\ga|\leq |\al|}\int_{t/2}^{t} (t-s)^{-1/2}(1+(t-s)^{-(N-|\al|)\de/2})  \|s^{1/2}\Ga^\be u \  \Ga^\ga u\|_{\Lf}\ ds\\
		&\les \|s^{1/2}\Ga^\be u\|_{\Lf(t/2,t;\Lf)}\|s^{1/2}\Ga^\ga u\|_{\Lf(t/2,t;\Lf)}\,.
	\end{align*}
	Finally, by \eqref{Dec_heat}, \eqref{AwayCone0} and \eqref{NearCone0} the term $\KK_{23}$ is treated as
	\begin{align*}
		\KK_{23}&\les \int_{t/2}^{t} (t-s)^{-1/2}\big(1+(t-s)^{-(N-|\al|)\de/2}\big)s^{1/2}\|\nab\Ga^\be\phi\nab\Ga^\ga \phi\|_{\Lf} ds\\
		&\les \|s^{1/2}\nab\Ga^\be\phi\|_{\Lf(t/2,t;\Lf)}\|s^{1/2}\nab\Ga^\ga \phi\|_{\Lf(t/2,t;\Lf)}\\
		&\les (E_{|\al|+1}+\XX_{|\al|+2})\,.
	\end{align*}

	Collecting the above estimates from $\KK_0$ to $\KK_2$ yields
	\begin{equation}   \label{u-bound}
		\begin{aligned}
			&\|t^{1/2}\ln^{-|\al|}(e+t)\<\nab\>^{(N-|\al|)\de}\Ga^\al u\|_{\Lf\Lf}\\
			&\les E^{1/2}_{|\al|+2}+\XX^{1/2}_{|\al|+2}
			+ \sum_{\be+\ga=\al}\|t^{1/2}\ln^{-|\be|}(e+t)\Ga^\be u\|_{\Lf\Lf}\|t^{1/2}\ln^{-|\ga|}(e+t)\Ga^\ga u\|_{\Lf \Lf}\,.
		\end{aligned}
	\end{equation}
    We note that, at initial time $t=0$, the bound for $\|\<\nab\>^{(N-|\al|)\de}\Ga^\al u(0)\|_{\Lf}\les E^{1/2}_{|\al|+2}$ is sufficiently small. Then for any $j\leq N-4$, we sum over $|\al|\leq k$ to yield
    \begin{align*}
    	&\sum_{|\al|\leq j}\|t^{1/2}\ln^{-|\al|}(e+t)\<\nab\>^{(N-|\al|)\de}\Ga^\al u\|_{\Lf\Lf}
    	\les E^{1/2}_{j+2}+\XX^{1/2}_{j+2}
    	+(\sum_{|\be|\leq j} \|t^{1/2}\ln^{-|\be|}(e+t)\Ga^\be u\|_{\Lf\Lf})^2\,.
    \end{align*}
By continuity method and the smallness of $E^{1/2}_N$ and $\XX^{1/2}_{N-2}$, we obtain
\begin{align*}
	\sum_{|\al|\leq j}\|t^{1/2}\ln^{-|\al|}(e+t)\<\nab\>^{(N-|\al|)\de}\Ga^\al u\|_{\Lf\Lf}\les E^{1/2}_{j+2}+\XX^{1/2}_{j+2}\,.
\end{align*}   
For any $N-3\leq j\leq N-2$, by \eqref{u-bound} and \eqref{u-ass1} we have
\begin{align*}
	&\sum_{|\al|= j}\|t^{1/2}\ln^{-|\al|}(e+t)\<\nab\>^{(N-|\al|)\de}\Ga^\al u\|_{\Lf\Lf}\\
	&\les (E^{1/2}_{j+2}+\XX^{1/2}_{j+2})+\sum_{|\be+\ga|=|\al|;|\be|,|\ga|<|\al|}(E^{1/2}_{|\be|+2}+\XX^{1/2}_{|\be|+2})(E^{1/2}_{|\ga|+2}+\XX^{1/2}_{|\ga|+2})\\
	&\quad+(E^{1/2}_2+\XX^{1/2}_2)\|t^{1/2}\ln^{-|\al|}(e+t)\Ga^\al u\|_{\Lf\Lf}\,.
\end{align*} 
Since $N\geq 10$ and \eqref{Main_Prop_Ass1}, the second term can be bounded by 
\begin{align*} 
	 \sum_{|\be+\ga|=|\al|;|\be|,|\ga|<|\al|}(E^{1/2}_{|\be|+2}+\XX^{1/2}_{|\be|+2})(E^{1/2}_{|\ga|+2}+\XX^{1/2}_{|\ga|+2})\les  (E^{1/2}_{[j/2]+2}+\XX^{1/2}_{[j/2]+2})(E^{1/2}_{j+1}+\XX^{1/2}_{j+1}) \les \ep (E^{1/2}_{j+1}+\XX^{1/2}_{j+1}) \,,  
\end{align*}
and the last term can be absorbed by the left hand side. Then we obtain 
\begin{align*}
	&\sum_{|\al|= j}\|t^{1/2}\ln^{-|\al|}(e+t)\<\nab\>^{(N-|\al|)\de}\Ga^\al u\|_{\Lf\Lf}
	\les (E^{1/2}_{j+2}+\XX^{1/2}_{j+2})\,.
\end{align*} 
Thus the desired bound \eqref{Decay-u} follows.
\end{proof}

\medskip 
Next, we prove the decay estimates of $\nab\Ga^\al u$ and $\d_t \Ga^\al u$ in $L^2$. 

\begin{proof}[Proof of the estimate \eqref{dtu-L2al}]
	
\ 

By the $\Ga^\al u$-equation in \eqref{Main_Sys_VecFie}, we have
\begin{align*}
	\|\d_t\Ga^\al u\|_{L^2}\lesssim \sum_{|\be|\leq |\al|}\|\De\Ga^\be u\|_{L^2}+\sum_{|\be+\ga|\leq |\al|}\|\P(\Ga^\be u\cdot\nab\Ga^\ga u)\|_{L^2}+\sum_{|\be+\ga|\leq |\al|}\|\P\nab(\nab\Ga^\be \phi\cdot\nab\Ga^\ga \phi)\|_{L^2}\,.
\end{align*}
For the second term, by Sobolev embedding we have
\begin{align*}
	\sum_{|\be+\ga|\leq |\al|}\|\P(\Ga^\be u\cdot\nab\Ga^\ga u)\|_{L^2}&\lesssim \|\Ga^{|\al|} u\|_{L^4}\|\nab\Ga^{|\al|} u\|_{L^4}
	\lesssim E^{1/2}_{|\al|+1}\|\nab\Ga^{ |\al|+1} u\|_{L^2}\,.
\end{align*}
By \eqref{AwayCone0} and \eqref{NearCone0} we bound the last term by
\begin{align*}
	\sum_{|\be+\ga|\leq |\al|}\|\P\nab(\nab\Ga^\be \phi\cdot\nab\Ga^\ga \phi)\|_{L^2}
	&\les E^{1/2}_{|\al|+1}\|\nab\Ga^{|\al|}\phi\|_{\Lf}
	\lesssim  \<t\>^{-1/2}(E^{1/2}_{[|\al|/2]+2}+\XX^{1/2}_{[|\al|/2]+3})E^{1/2}_{|\al|+1}\,.
\end{align*}
The above bounds give the estimate \eqref{dtu-L2al}. This completes the proof.
\end{proof}

\medskip 
\begin{proof}[Proof of the estimate \eqref{du-L2}]
\

We prove the bound \eqref{du-L2} by induction. Assume that for any $|\be|<|\al|$
\begin{equation}\label{u-ass2}
	\|\<\nab\>^{(N-|\be|)\de}\nab\Ga^\be u\|_{L^2}\lesssim  \<t\>^{-1/2}\ln^{|\be|+1}(e+t)(E^{1/2}_{|\be|+2}+\XX^{1/2}_{|\be|+2}) \,.
\end{equation}
By Duhamel's formula \eqref{Duhamel}, we have
\begin{align*}
	&\<\nab\>^{(N-|\al|)\de}\nab\Ga^{\al}u(t)\\
	&=e^{t\De}\<\nab\>^{(N-|\al|)\de}\nab\Ga^\al u(0)+\int_0^t e^{(t-s)\De}\<\nab\>^{(N-|\al|)\de}\Big[\nab\De\sum_{\be_1=0}^{\al_1-1}C_{\al_1}^{\be_1}(-1)^{\al_1-\be_1}S^{\be_1}\Ga^{\al'}u(s)\\
	&\quad -\nab\P\sum_{\be+\ga=\al}C_\al^\be[ \Ga^\be u\cdot\nab \Ga^\ga u+ \d_j\Gamma^{\be}\phi\cdot\d_j\Gamma^{\ga}\phi)]\Big]\ ds\,.
\end{align*}
Then
\begin{align*}
	\|t^{1/2}\<\nab\>^{(N-|\al|)\de} \nab\Ga^\al u\|_{\Lf}\les H_0+H_1+H_2+H_3+H_4\,,
\end{align*}
where 
\begin{align*}
	&H_0=\|t^{1/2}e^{t\De}\<\nab\>^{(N-|\al|)\de}\nab\Ga^\al u(0)\|_{L^2}\,,\\
	&H_1=\sum_{|\be|<|\al|}\|t^{1/2}\int_0^t e^{(t-s)\De}\<\nab\>^{(N-|\al|)\de}\nab\De\Ga^{\be}u(s)\ ds\|_{L^2}\,,\\
	&H_2=\sum_{\be+\ga=\al}\|t^{1/2}\int_0^{t/2} e^{(t-s)\De}   \<\nab\>^{(N-|\al|)\de}\nab\big(\Ga^\be u \cdot\nab \Ga^\ga u+\nab(\nab\Ga^\be\phi \nab\Ga^\ga\phi)\big)(s)\ ds\|_{L^2}\,,\\
	&H_3=\sum_{\be+\ga=\al}\|t^{1/2}\int_{t/2}^t e^{(t-s)\De}   \<\nab\>^{(N-|\al|)\de}\nab\P(\Ga^\be u \cdot\nab \Ga^\ga u)(s)\ ds\|_{L^2}\,,\\
	&H_4=\sum_{\be+\ga=\al}\|t^{1/2}\int_{t/2}^t e^{(t-s)\De}  \<\nab\>^{(N-|\al|)\de}\nab^2\P(\nab\Ga^\be \phi \nab\Ga^\ga\phi)(s)\ ds\|_{L^2}\,.
\end{align*}

By \eqref{Dec_heat} the first term $H_0$ is bounded by $E_{|\al|+2}^{1/2}$.
Similar to the estimates of $\KK_1$ and $\KK_{21}$ in \eqref{KK1} and \eqref{K21} respectively, by \eqref{Dec_heat} and \eqref{u-ass2} we have
\begin{align*}
	H_1&\lesssim \|\Ga^{|\al|-1} u\|_{\Lf L^2}+\ln(e+t)\|s^{1/2}\<\nab\>^{(N-|\al|+1)\de}\nab\Ga^\be u\|_{\Lf(t/2,t;L^2)}\\
	&\lesssim  \ln^{|\al|+1}(e+t)(E^{1/2}_{|\al|+2}+\XX^{1/2}_{|\al|+2})\,.
\end{align*}
and 
\begin{align*}
	H_2 &\lesssim \sum_{|\be+\ga|\leq |\al|} \|\Ga^\be u\|_{L^\infty L^2} \| \Ga^\ga u\|_{\Lf L^2} +\|\nab\Ga^\be\phi\|_{L^\infty L^2} \|\nab\Ga^\ga\phi\|_{L^\infty L^2}\lesssim E_{|\al|}\,.
\end{align*}
For the term $H_3$, by \eqref{Decay-u} we have
\begin{align*}
	H_3&\les \sum_{|\be+\ga|= |\al|}\int_{t/2}^{t} s^{1/2}(t-s)^{-1/2} \big(1+(t-s)^{-\frac{(N-|\al|)\de}{2}}\big)  \|(\Ga^\be u\cdot\nab\Ga^\ga u)(s)\|_{L^2}\ ds\\
	&\lesssim \sum_{|\be+\ga|= |\al|}\|s^{1/2}\Ga^{\be} u\|_{\Lf(t/2,t;L^\infty)} \|s^{1/2}\nab\Ga^{\ga} u\|_{\Lf(t/2,t;L^2)} \\
	&\les \sum_{|\be+\ga|= |\al|}\ln^{|\be|}(e+t)(E^{1/2}_{|\be|+1}+\XX^{1/2}_{|\be|+2}) \|s^{1/2}\nab\Ga^{\ga} u\|_{\Lf(t/2,t;L^2)}\,.
\end{align*}

Finally, by \eqref{AwayCone0} and \eqref{NearCone0} the term $H_4$ is treated as
\begin{align*}
	H_4 &\les \int_{t/2}^{t-1} (t-s)^{-1}\big(1+(t-s)^{-\frac{(N-|\al|)\de}{2}}\big) s^{1/2}\|\nab\Ga^\be \phi\nab\Ga^\ga\phi\|_{L^2}\ ds\\
	&\quad+\int_{t-1}^t (t-s)^{-1/2}\big(1+(t-s)^{-\frac{(N-|\al|)\de}{2}}\big) s^{1/2}\|\nab(\nab\Ga^\be \phi\nab\Ga^\ga\phi)\|_{L^2}\ ds\\
	&\lesssim \ln(e+t)\sum_{|\be+\ga|= |\al|}E^{1/2}_{|\be|}(E^{1/2}_{|\ga|+1}+\XX^{1/2}_{|\ga|+2})+\sum_{|\be+\ga|= |\al|}E^{1/2}_{|\be|+1}(E^{1/2}_{|\ga|+1}+\XX^{1/2}_{|\ga|+2})\\
	&\lesssim  \ln(e+t)E^{1/2}_{|\al|+1}(E^{1/2}_{|\al|+1}+\XX^{1/2}_{|\al|+2})\,.
\end{align*}
The above estimates from $H_0$ to $H_4$ yield
\begin{align*}
	&t^{1/2}\ln^{-(|\al|+1)}(e+t)\|\<\nab\>^{(N-|\al|)\de}\nab\Ga^\be u\|_{L^2}\\
	&\lesssim E^{1/2}_{|\al|+2}+\XX^{1/2}_{|\al|+2}
	+(E^{1/2}_{2}+\XX^{1/2}_2)\|s^{1/2}\ln^{-(|\al|+1)}(e+s)\nab\Ga^\al u\|_{\Lf(t/2,t;L^2)}\,.
\end{align*}
From the bootstrap assumption $E^{1/2}_N+\XX^{1/2}_{N-2}\les \ep$, the last term can be absorbed by the left hand side. Thus we obtain the bound \eqref{du-L2}.
\end{proof}

\bigskip 
Then we prove the bound \eqref{dtu-L2} using the estimates \eqref{dtu-L2al} and \eqref{du-L2}.
\begin{proof}[Proof of the estimate \eqref{dtu-L2}]
\ 

By the first bound \eqref{dtu-L2al} and \eqref{du-L2}, we obtain
\begin{align*}
	\|\d_t\Ga^\al u\|_{L^2}&\lesssim \|\nab\Ga^{ |\al|+1}u\|_{L^2}(1+E^{1/2}_{|\al|+1})+ \<t\>^{-1/2}(E^{1/2}_{[|\al|/2]+2}+\XX^{1/2}_{[|\al|/2]+3})\\
	&\les \<t\>^{-1/2}\ln^{|\al|+2}(e+t)(E^{1/2}_{|\al|+3}+\XX^{1/2}_{|\al|+3})(1+E^{1/2}_{|\al|+1})+\<t\>^{-1/2}(E^{1/2}_{[|\al|/2]+2}+\XX^{1/2}_{[|\al|/2]+3})\\
	&\les \<t\>^{-1/2}\ln^{|\al|+2}(e+t)(E^{1/2}_{|\al|+3}+\XX^{1/2}_{|\al|+3})\,.
\end{align*}
Hence, the bound \eqref{dtu-L2} follows. We complete the proof of the lemma.
\end{proof}

\bigskip 
\begin{proof}[Proof of the estimate \eqref{decay-du0}]
\ 

We assume that for any $|\be|<|\al|$
\begin{equation}    \label{ass-duLf}
	\|\<\nab\>^{(N-|\be|)\de}\nab \Ga^\be u\|_{L^\infty}\lesssim \<t\>^{-1}\ln^{|\be|+1}(e+t)(E^{1/2}_{|\be|+3}+\XX_{|\be|+3})\,.
\end{equation}

By Duhamel's formula, we have
\begin{align*}
	&\<\nab\>^{(N-|\al|)\de}\nab\Ga^{\al}u(t)\\
	&=e^{t\De}\<\nab\>^{(N-|\al|)\de}\nab\Ga^\al u(0)+\int_0^t e^{(t-s)\De}\<\nab\>^{(N-|\al|)\de}\Big[\nab\De\sum_{\be_1=0}^{\al_1-1}C_{\al_1}^{\be_1}(-1)^{\al_1-\be_1}S^{\be_1}\Ga^{\al'}u(s)\\
	&\quad -\nab\P\sum_{\be+\ga=\al}C_\al^\be[ \Ga^\be u\cdot\nab \Ga^\ga u+ \d_j(\nab\Gamma^{\be}\phi\cdot\d_j\Gamma^{\ga}\phi)]\Big]\ ds\,.
\end{align*}
Hence,
\begin{align*}
	\|t \<\nab\>^{(N-|\al|)\de} \nab \Ga^\al u\|_{\Lf}\les L_0+L_1+L_2+L_3+L_4\,,
\end{align*}
where 
\begin{align*}
	&L_0=\|t e^{t\De}\<\nab\>^{(N-|\al|)\de}\nab u(0)\|_{\Lf}\,,\\
	&L_1=\sum_{|\be|<|\al|}\|t\int_{0}^t e^{(t-s)\De} \<\nab\>^{(N-|\al|)\de}\nab  \De \Ga^\be u\ ds\|_{\Lf}\,,\\
	&L_2=\sum_{\be+\ga=\al}\|t\int_0^{t/2} e^{(t-s)\De} \<\nab\>^{(N-|\al|)\de}\nab \big(\Ga^\be u \cdot\nab\Ga^\be u+\nab(\nab\Ga^\be\phi \nab\Ga^\ga\phi)\big)(s)\ ds\|_{\Lf}\,,
\end{align*}
\begin{align*}
	&L_3=\sum_{\be+\ga=\al}\|t\int_{t/2}^t e^{(t-s)\De} \<\nab\>^{(N-|\al|)\de}\nab   \P\big(\Ga^\be u \cdot\nab\Ga^\ga  u\big)(s)\ ds\|_{\Lf}\,,\\
	&L_4=\sum_{\be+\ga=\al}\|t\int_{t/2}^t e^{(t-s)\De} \<\nab\>^{(N-|\al|)\de}\nab   \nab\P\big(\nab\Ga^\be \phi \nab\Ga^\ga\phi\big)(s)\ ds\|_{\Lf}\,.
\end{align*}

By \eqref{Dec_heat} the first term $L_0$ is bounded by $E^{1/2}_{|\al|+3}$.
In a similar way to the estimates of $\KK_1$, by \eqref{Dec_heat} and \eqref{ass-duLf} the term $L_1$ is bounded by $\ln^{|\al|+1}(e+t)(E^{1/2}_{|\al|+3}+\XX^{1/2}_{|\al|+3})$.
Similar to $\KK_{21}$, we bound the $L_2$ by $E_{|\al|}$.
For the term $L_3$, by \eqref{Dec_heat} and \eqref{Decay-u} we have
\begin{align*}
	L_3 &\lesssim \sum_{\be+\ga=\al}\int_{t/2}^{t} (t-s)^{-1/2}\big(1+(t-s)^{-\frac{(N-|\al|)\de}{2}}\big)  \|\Ga^\be u\|_{\Lf} \|s \nab \Ga^\ga  u\|_{L^\infty }\ ds\\
	&\les \|s^{1/2}\Ga^\be u\|_{\Lf(t/2,t;\Lf)}\|s \nab\Ga^\ga u\|_{\Lf(t/2,t;\Lf)}\\
	&\lesssim \sum_{|\be+\ga|=|\al|}\ln^{|\be|}(e+t) (E^{1/2}_{|\be|+2}+\XX^{1/2}_{|\be|+2})\|s \nab\Ga^\ga u\|_{\Lf(t/2,t;\Lf)}\,.
\end{align*}
Finally, similar to the estimate of $L_3$, by \eqref{AwayCone0} and \eqref{NearCone0} the term $L_3$ is treated as
\begin{align*}
	L_4&\lesssim \ln(e+t) \sum_{\be+\ga=\al} \|s^{1/2}\nab\Ga^\be \phi\|_{\Lf} \|s^{1/2}\nab^2\Ga^\ga  \phi\|_{L^\infty }\\
	&\les \ln(e+t) \sum_{|\be+\ga|=|\al|} ( E^{1/2}_{|\be|+1}+\XX^{1/2}_{|\be|+2})( E^{1/2}_{|\ga|+2}+\XX^{1/2}_{|\ga|+3})\,.
\end{align*}

Collecting the above bounds for $L_0,\cdots,\ L_4$, we obtain
\begin{align*}
	&\|t\ln^{-(|\al|+1)}(e+t)\<\nab\>^{(N-|\al|)\de}\nab \Ga^\al u\|_{\Lf L^\infty}\\
	&\les E^{1/2}_{|\al|+3}+\XX^{1/2}_{|\al|+3}+\sum_{|\be+\ga|=|\al|} ( E^{1/2}_{|\be|+2}+\XX^{1/2}_{|\be|+2})( E^{1/2}_{|\ga|+3}+\XX^{1/2}_{|\ga|+3})\\
	&\quad+(E^{1/2}_{2}+\XX^{1/2}_{2})\|s \ln^{-(|\al|+1)} \nab\Ga^\al u\|_{\Lf(t/2,t;\Lf)}\,.
\end{align*}
By $N\geq 10$ and the smallness of $E^{1/2}_N+\XX^{1/2}_{N-2}\les \ep$, the last term can be absorbed by the left hand side.
Thus the estimate \eqref{decay-du0} follows.
\end{proof}

\bigskip 
Finally, we prove the last two estimates in Proposition \ref{Dec-lem} iii). In fact, we can obtain the following general decay estimates of $\nab^{1+\si} \Ga^\al u$.
\begin{lemma}\label{dec-duLem}
	Let $1/2\leq \si\leq 3/2$. Assume that $(u,\phi)$ is the solution of \eqref{Main_Sys_VecFie} satisfying \eqref{MainAss_dini} and \eqref{Main_Prop_Ass1}.	For any $|\al|\leq N-4$ we have the improved decays
	\begin{align}\label{decay-d2u}
		&\|\nab^{1+\si}\Ga^\al u\|_{L^\infty}\lesssim \<t\>^{-1} (E^{1/2}_{|\al|+4}+\XX_{|\al|+4}^{1/2}) \,.
	\end{align}
\end{lemma}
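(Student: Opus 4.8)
The plan is to run an induction on $|\al|$ based on the Duhamel representation \eqref{Duhamel}, in the same spirit as the proof of \eqref{decay-du0}, but carrying the fractional operator $\nab^{1+\si}$ through the heat flow carefully so as to land on the sharp rate $\<t\>^{-1}$ with no logarithmic loss. First I would apply $\<\nab\>^{(N-|\al|)\de}\nab^{1+\si}$ to \eqref{Duhamel} and split the result into the linear piece $e^{t\De}\<\nab\>^{(N-|\al|)\de}\nab^{1+\si}\Ga^\al u(0)$, the lower-order contributions produced by commuting $S$ through $e^{(t-s)\De}$ (present only when $\al_1\geq 1$), and the two quadratic nonlinearities $\P(\Ga^\be u\cdot\nab\Ga^\ga u)$ and $\nab\P(\nab\Ga^\be\phi\cdot\nab\Ga^\ga\phi)$; each integral term is further cut at $s=t/2$ and, near the endpoint, at $s=t-1$. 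The linear piece is handled by $\|\nab^{1+\si}e^{t\De}f\|_{\Lf}\lesssim t^{-1/2-(1+\si)/2}\|f\|_{L^2}$, which after multiplication by $t$ gives $t^{-\si/2}E^{1/2}_{|\al|+2}$; combined with local theory on a bounded time interval this disposes of it.

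For the lower-order $\De S^{\be_1}$-terms I would proceed as in the estimate of $\KK_1$ and $L_1$: on $[0,t/2]$ the heat factor is $\sim t^{-(3+\si)/2}$ and is paired with $\|\Ga^\be u\|_{\Lf}\lesssim \<s\>^{-1/2}$ from \eqref{Decay-u} (using $|\be|<|\al|\leq N-4$, so only energies of order $\leq|\al|+3$ are needed), on $[t/2,t-1]$ one pushes all but one derivative onto the $L^2$ slot and invokes the inductive bound together with \eqref{du-L2}, and on $[t-1,t]$ one uses a gain $(t-s)^{-1+\de/2}$; because the target rate here is the faster $\<t\>^{-1}$ rather than $\<t\>^{-1/2}$, these close without generating a logarithm. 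For the velocity quadratic term, on $[0,t/2]$ I would put the product in $L^1$, $\|\Ga^\be u\cdot\nab\Ga^\ga u\|_{L^1}\lesssim \|\Ga^\be u\|_{L^2}\|\nab\Ga^\ga u\|_{L^2}\lesssim E_{|\al|+1}$, against the heat factor $t^{-1-(1+\si)/2}$; on $[t/2,t]$ I would peel one derivative off the semigroup, $\|\nab^{1+\si}e^{(t-s)\De}F\|_{\Lf}\lesssim (t-s)^{-\si/2}\|\nab F\|_{\Lf}$ with $\si/2<1$ so the $s$-integral converges, and distribute the derivatives in $\nab(\Ga^\be u\cdot\nab\Ga^\ga u)$ using \eqref{Decay-u}, \eqref{decay-du0} and the inductive bounds; the only borderline piece is the self-interaction $u\cdot\nab^{1+\si}\Ga^\al u$, but it carries the small factor $\|u\|_{\Lf}\lesssim \ep\<s\>^{-1/2}$ from \eqref{Decay-u} and is therefore absorbed into the left-hand side via the smallness in \eqref{Main_Prop_Ass1}.

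The $\phi$-quadratic term is treated by the same endpoint splitting: near and away from the light cone the factors $\nab\Ga^\be\phi$ and $\nab^2\Ga^\ga\phi$ are controlled in $\Lf$ by \eqref{AwayCone0}, \eqref{NearCone0}, \eqref{AwayCone} and \eqref{NearCone} — each of which already mixes $E^{1/2}$ with the weighted norm $\XX^{1/2}$, which is exactly why the final estimate appears in the form $E^{1/2}_{|\al|+4}+\XX^{1/2}_{|\al|+4}$ — while on the interior region one uses $L^1$ and $L^2$ Hölder bounds; the weighted norm $\XX$ is indispensable in the region $r\approx t$. Collecting all contributions yields $t\|\nab^{1+\si}\Ga^\al u\|_{\Lf}\lesssim E^{1/2}_{|\al|+4}+\XX^{1/2}_{|\al|+4}+\ep\, t\sup_{[t/2,t]}\|\nab^{1+\si}\Ga^\al u\|_{\Lf}$, and the last term is absorbed. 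Specializing $\si=1$ gives \eqref{decay-d2Gau}, and then \eqref{decay-dtu} follows from the $\Ga^\al u$-equation in \eqref{Main_Sys_VecFie} by bounding $\|\d_t\Ga^\al u\|_{\Lf}$ through $\|\De(S-1)^{\al_1}Z^{\al'}u\|_{\Lf}$, controlled by \eqref{decay-d2Gau}, plus the faster-decaying nonlinearity $f_\al$. I expect the main obstacle to be the $[t/2,t]$ analysis of the quadratic terms: balancing the borderline-integrable factor $(t-s)^{-\si/2}$ (worst when $\si$ is close to $3/2$) against the available decay of the nonlinearities without picking up a logarithm, and in particular correctly treating the self-interaction $u\cdot\nab^{1+\si}\Ga^\al u$ and the top-order $\phi$-interactions near the light cone, where only the weighted bound \eqref{NearCone} is at hand.
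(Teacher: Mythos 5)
Your overall skeleton — induction on $|\al|$ through the Duhamel formula \eqref{Duhamel}, with splits at $s=t/2$ and $s=t-1$ and heat-kernel smoothing — matches the paper. But there is a genuine gap in how you propose to close the lower-order linear contributions $\De S^{\be_1}Z^{\al'}u$ with $|\be|<|\al|$ on the late-time intervals. After applying $\nab^{1+\si}$, this piece is $\nab^{3+\si}\Ga^\be u$. Near $s=t$ the heat factor can only absorb derivatives at the cost of powers of $(t-s)$: to keep integrability of the $s$-integral you can peel off at most $\nab^{2-\de}$, leaving you needing to control $\nab^{1+\si+\de}\Ga^\be u$ in $\Lf$; but for $\si$ close to $3/2$ that exponent exits the range $[1/2,3/2]$ and is therefore not covered by your inductive hypothesis. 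Your suggested alternative of "pushing all but one derivative onto the $L^2$ slot and invoking \eqref{du-L2}" fares no better: the $L^2\to\Lf$ heat bound then contributes an extra $(t-s)^{-1/2}$, and the combined exponent is not integrable near $s=t$ once $\si\geq 1$ (and \eqref{du-L2} anyway only controls $\nab\Ga^\be u$, not $\nab^{2+\si}\Ga^\be u$, in $L^2$).

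The paper resolves exactly this obstruction by proving the strengthened statement \eqref{d2u-general}: the inductive quantity is $\|\nab^{1+\si+j_\al\de}\Ga^\al u\|_{\Lf}$ for all integers $j_\al$ with $|j_\al|\leq N-|\al|$, a window that widens as $|\al|$ drops. On $[t/2,t-1]$ the heat absorbs $\nab^{2+\de}$ (factor $(t-s)^{-1-\de/2}$, integrable), leaving $\nab^{1+\si+(j_\al-1)\de}\Ga^\be u$ — which is still in the inductive family because $|\be|<|\al|$ enlarges the allowed range of $j$. Similarly on $[t-1,t]$ with $j_\al+1$. This bookkeeping in the $j$-slot is the essential ingredient that your proposal lacks, and without it the induction does not close for $\si\in[1,3/2]$.

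Your remaining estimates (endpoint treatment of the linear semigroup, the $L^1$-based bounds on $[0,t/2]$, the light-cone splitting for the $\phi$-quadratic using \eqref{AwayCone0}, \eqref{NearCone0}, \eqref{AwayCone}, \eqref{NearCone}) are in line with the paper. Your suggestion to absorb the self-interaction $u\cdot\nab^{1+\si}\Ga^\al u$ using the smallness $\|u\|_{\Lf}\lesssim\ep\<s\>^{-1/2}$ is a valid alternative to what the paper does (the paper just charges $E^{1/2}_{|\al|+4}+\XX^{1/2}_{|\al|+4}$ to that factor in the final displayed sum), so that part is fine; but it does not compensate for the missing $j$-parametrized induction on the lower-order linear terms.
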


We should remark that the bound \eqref{decay-d2u} also holds for any fixed $\si>0$. The additional derivatives $\nab^\si$ allow us to gain more decays, which guarantees the above estimates.

The bound \eqref{decay-d2Gau} is a direct consequence of the above bound with $\si=1$. Hence it suffices to prove the general estimates \eqref{decay-d2u}.

\bigskip 
\begin{proof}[Proof of the estimate \eqref{decay-d2u}]
	\ 
	
	Let $\de=1/100$. Here we would prove a more general form: for any $|\al|\leq N-4$ and integer $-(N-|\al|)\leq j_\al\leq N-|\al|$
	\begin{equation}    \label{d2u-general}
		\|\nab^{1+\si+j_\al\de} \Ga^\al u\|_{L^\infty}\lesssim \<t\>^{-1} (E^{1/2}_{|\al|+4}+\XX_{|\al|+4}^{1/2})\,.
	\end{equation}
   This estimate is proved by induction. Precisely, we assume that the following bound holds for any $|\be|<|\al|$ and $-(N-|\be|)\leq j_\be\leq N-|\be|$
	\begin{equation}    \label{ass-d2u}
		\|\nab^{1+\si+j_\be\de} \Ga^\be u\|_{L^\infty}\lesssim \<t\>^{-1} (E^{1/2}_{|\be|+4}+\XX_{|\be|+4}^{1/2})\,.
	\end{equation}

	Then by Duhamel's formula, we have
	\begin{align*}
		\nab^{1+\si+j_\al\de}\Ga^{\al}u(t)
		&=e^{t\De}\nab^{1+\si+j_\al\de}\Ga^\al u(0)+\int_0^t e^{(t-s)\De}\Big[\nab^{1+\si+j_\al\de}\De\sum_{\be_1=0}^{\al_1-1}C_{\al_1}^{\be_1}(-1)^{\al_1-\be_1}S^{\be_1}\Ga^{\al'}u(s)\\
		&\quad -\nab^{1+\si+j_\al\de}\P\sum_{\be+\ga=\al}C_\al^\be[ \Ga^\be u\cdot\nab \Ga^\ga u+ \d_j(\nab\Gamma^{\be}\phi\cdot\d_j\Gamma^{\ga}\phi)]\Big]\ ds\,.
	\end{align*}
	Using \eqref{Dec_heat}, the $\Lf$-norm of the first term is bounded by $\<t\>^{-1}E^{1/2}_{|\al|+4}$. For the lower-order linear term, by \eqref{ass-d2u} we have
	\begin{align*}
		\sum_{|\be|<|\al|}\|\int_{0}^t e^{(t-s)\De} \nab^{1+\si+j_\al\de}  \De \Ga^\be u\ ds\|_{\Lf}
		&\les \int_0^{t/2} (t-s)^{-2-\si/2-j_\al\de/2}\|\Ga^{|\al|-1}u\|_{L^2}\\
		&\quad+\int_{t/2}^{t-1} (t-s)^{-1-\de/2} \|\nab^{1+\si+(j_\al-1)\de}\Ga^\be u\|_{\Lf}ds\\
		&\quad+\int_{t-1}^{t} (t-s)^{-1+\de/2} \|\nab^{1+\si+(j_\al+1)\de}\Ga^\be u\|_{\Lf}ds\\
		&\les \<t\>^{-1} E^{1/2}_{|\al|-1}+\<t\>^{-1}(E^{1/2}_{|\al|+4}+\XX_{|\al|+4}^{1/2}) 
	\end{align*}
	Similar to $L_2$, by \eqref{Dec_heat} we easily have
	\begin{align*}
		&\sum_{\be+\ga=\al}\|\int_0^{t/2} e^{(t-s)\De}
		\nab^{1+\si+j_\al\de}\P \big(\Ga^\be u\cdot\nab \Ga^\ga u+ \d_j(\nab\Gamma^{\be}\phi\cdot\d_j\Gamma^{\ga}\phi)\big)\ ds\|_{\Lf}\\
		&\les \int_0^{t/2}(t-s)^{-2-\si/2-j_\al\de/2}(\|\Ga^\be u\cdot \Ga^\ga\|_{L^1}+\|\nab\Ga^\be\phi\nab\Ga^\ga\phi\|_{L^1})  \ ds\\
		&\lesssim \<t\>^{-1} E_{|\al|}\,.
	\end{align*}
	Next, we use \eqref{Decay-u} and \eqref{decay-du0} to bound 
	\begin{align*}
		&\sum_{\be+\ga=\al}\|\int_{t/2}^t e^{(t-s)\De}
		\nab^{1+\si+j_\al\de}\P (\Ga^\be u\cdot\nab \Ga^\ga u)\ ds\|_{\Lf}\\
		&\les \int_{t/2}^{t-1} (t-s)^{-1/2-\si/2-j_\al\de/2}\|\Ga^\be u\|_{\Lf}\|\nab\Ga^\ga u\|_{\Lf}\\
		&\quad+\int_{t-1}^t (t-s)^{-\si/2-j_\al\de/2}\|\nab(\Ga^\be u\cdot\nab\Ga^\ga u)\|_{\Lf}\\
		&\les t^{1/4-j_\al\de/2-3/2+\de/2}\sum_{|\be+\ga|=|\al|}\sum_{l=0}^1(E^{1/2}_{|\be|+2+l}+\XX^{1/2}_{|\be|+2+l})(E^{1/2}_{|\ga|+4-l}+\XX^{1/2}_{|\ga|+4-l})\\
		&\les \<t\>^{-1}(E^{1/2}_{|\al|+4}+\XX_{|\al|+4}^{1/2})\,.
	\end{align*}
In the last inequality, we have used $N\geq 10$ and \eqref{Main_Prop_Ass1}.
	Using \eqref{AwayCone0} and \eqref{NearCone0}, we bound 
	\begin{align*}
		&\sum_{\be+\ga=\al}\|\int_{t/2}^t e^{(t-s)\De}
		\nab^{1+\si+j_\al\de}\P  \d_j(\nab\Gamma^{\be}\phi\cdot\d_j\Gamma^{\ga}\phi)\ ds\|_{\Lf}\\
		&\les \int_{t/2}^{t-1}(t-s)^{-1-\si/2-j_\al\de/2}\|\nab\Ga^\be\phi\nab\Ga^\ga\phi\|_{\Lf}  \ ds\\
		&\quad+\int_{t-1}^t (t-s)^{-\si/2-j_\al\de/2}\|\nab^2(\nab\Ga^\be\phi\nab\Ga^\ga\phi)\|_{\Lf}  \ ds\\
		&\lesssim \<t\>^{-1} \sum_{|\be+\ga|=|\al|}\sum_{l=0}^2 (E^{1/2}_{|\be|+1+l}+\XX^{1/2}_{|\be|+2+l})(E^{1/2}_{|\ga|+3-l}+\XX^{1/2}_{|\ga|+4-l})\\
		&\les \<t\>^{-1}(E^{1/2}_{|\al|+3}+\XX^{1/2}_{|\al|+4}) \,.
	\end{align*}
	Collecting the above bounds, we obtain the desired estimate \eqref{d2u-general}. Hence, the bound \eqref{decay-d2u} follows.
\end{proof}

\bigskip

As a consequence of \eqref{decay-d2Gau}, we then obtain the decay estimates \eqref{decay-dtu}.

\begin{proof}[Proof of the estimate \eqref{decay-dtu}]
\ 

By the $u$-equation in \eqref{Main_Sys} and \eqref{decay-d2u}, we have
\begin{align*}
	\|\d_t \Ga^{\al} u\|_{L^\infty}\lesssim \sum_{|\be|\leq |\al|}\|\De \Ga^\be u\|_{L^\infty}+\sum_{|\be+\ga|= |\al|}\|\P(\Ga^\be u\cdot\nab \Ga^\ga u)\|_{L^\infty}+\sum_{|\be+\ga|= |\al|}\|\P\d_j(\nab\Ga^\be\phi\d_j\Ga^\ga\phi)\|_{L^\infty}\,.
\end{align*}
The first term has beed estimated in \eqref{decay-d2u}.
For the second term, by Sobolev embedding, \eqref{Decay-u}, \eqref{decay-du0} and \eqref{decay-d2u} we bound it by
\begin{align*}
	\sum_{|\be+\ga|= |\al|}\|\P(\Ga^\be u\cdot\nab \Ga^\ga u)\|_{L^\infty}&\lesssim \sum_{|\be+\ga|= |\al|}\|\Ga^\be u\cdot \nab \Ga^\ga u\|_{W^{1,4}}\\
    &\lesssim \sum_{|\be+\ga|= |\al|}\|\Ga^\be u\|_{W^{1,4}}\|\nab \Ga^\ga u\|_{L^\infty} +\|\Ga^\be u\|_{L^4}\|\nab^2 \Ga^\ga u\|_{L^\infty}\\
	&\lesssim  \sum_{|\be+\ga|= |\al|}\|\<\nab\>\Ga^\be u\|_{L^2}^{1/2}\|\<\nab\>\Ga^\be u\|_{\Lf}^{1/2}\<t\>^{-1+\de}(E^{1/2}_{|\ga|+3}+\XX^{1/2}_{|\ga|+3})\\
	&\quad+\sum_{|\be+\ga|= |\al|}E^{1/2}_{|\be|+1} \<t\>^{-1} (E^{1/2}_{|\ga|+4}+\XX_{|\ga|+4}^{1/2})\\
	&\les \<t\>^{-1}\ep (E^{1/2}_{|\al|+4}+\XX_{|\al|+4}^{1/2})\,.
\end{align*}
For the last term, by the decays \eqref{AwayCone0} and \eqref{NearCone0} we have
\begin{align*}
	\sum_{|\be+\ga|= |\al|}\|\P\d_j(\nab\Ga^\be\phi\d_j\Ga^\ga\phi)\|_{L^\infty} &\les \sum_{|\be+\ga|= |\al|}\|P_{\leq 0}(\nab\Ga^\be\phi\d_j\Ga^\ga\phi)\|_{L^\infty}+\|P_{> 0}\nab^{1+\de}(\nab\Ga^\be\phi\d_j\Ga^\ga\phi)\|_{L^\infty}\\
	&\lesssim \sum_{|\be+\ga|= |\al|}\|\nab\Ga^\be\phi\nab \Ga^\ga \phi\|_{L^\infty}+\|\nab^{1+\de}(\nab\Ga^\be\phi\nab\Ga^\ga\phi)\|_{L^\infty}\\
	&\lesssim  \sum_{|\be+\ga|= |\al|}\<t\>^{-1}\sum_{|\be+\ga|=|\al|}\sum_{l=0}^2 (E^{1/2}_{|\be|+1+l}+\XX^{1/2}_{|\be|+2+l})(E^{1/2}_{|\ga|+3-l}+\XX^{1/2}_{|\ga|+4-l})\\
	&\lesssim  \<t\>^{-1}(E^{1/2}_{|\al|+3}+\XX^{1/2}_{|\al|+4})\,.
\end{align*}
The above three estimates combined with \eqref{decay-d2u} implies the bound \eqref{decay-dtu}.
\end{proof}

\bigskip

\section{\texorpdfstring{$L^2$}{} Weighted estimates}\label{sec-L2}

The goal of this section is to bound the weighted $L^2$ generalized energies $\XX_k$, which is a bridge connecting energy bounds and decay estimates. Our proof is divided into two steps. First, we prove that the lower-order $L^2$ weighted norms $\XX^{1/2}_{N-2}$ are controlled by energy. This will be used to prove the improved estimates in \eqref{Main_Prop_result1} Then by this estimate and decay estimates in Section \ref{sec-decay}, we show an appropriate bound for the higher order $L^2$ weighted norms.

We recall the dissipative energy and ghost weight energy as 
\begin{equation*}
	\mathcal D_{N}(t)=\sum_{\al\leq N}\Big(\|\nab\Ga^\al u\|_{L^2}+\Big\|\frac{\Ga^\al u}{\<t-r\>}\Big\|_{L^2}+ \Big\|\frac{\om \d_t \Ga^\al \phi+\nab\Ga^\al \phi}{\<t-r\>}\Big\|_{L^2}\Big)\,.
\end{equation*}
The main result in the section is as follows.
\begin{proposition}\label{propXX}
	Let $N\geq 10$. Assume that $(u,\phi)$ is the solution of \eqref{Main_Sys} satisfying the assumption \eqref{Main_Prop_Ass1}, then we have the lower order bound
	\begin{align}  \label{XX_ka-2}
		\XX^{1/2}_{N-2}&\lesssim E^{1/2}_{N-2}+\ep^2\,,
	\end{align}
and the higher order bound
\begin{align}	\label{XX_ka}
	\XX^{1/2}_{N}&\lesssim  \ep+\ep \<t\>^{1/2} \DD_{N}\,.
\end{align}
\end{proposition}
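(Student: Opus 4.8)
The plan is to control $\XX_k$ by testing the $\phi$-equation against suitable weighted multipliers and exploiting the dissipation encoded in $\DD_N$. Recall that $\XX_k$ measures the weighted $L^2$ norm of $D^2\Ga^\al\phi = \nabla_{t,x}^2\Ga^\al\phi$ with weight $\<t-r\>$ inside the light cone and $\<t\>$ far outside. The starting point is the equation $\d_t^2\Ga^\al\phi - \De\Ga^\al\phi = g_\al$ from \eqref{Main_Sys_VecFie}, together with the elementary but crucial algebraic identity that on $\R^{1+2}$ the full Hessian $D^2\psi$ of a function can be recovered from $\Box\psi$ plus \emph{tangential} derivatives: writing $\nabla = \om\d_r + \frac{\om^\perp}{r}\d_\theta$, one has
\[
\<t-r\>|D^2\psi| \lesssim \<t-r\>|\Box\psi| + |\bar D D\psi| + \frac{1}{\<t-r\>}|D\psi|\,,
\]
where $\bar D$ denotes the ``good" (null) derivatives $\d_t+\d_r$, $\frac{1}{r}\d_\theta$, i.e. derivatives falling on the tangential directions to the cone. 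This reduces the estimate of $\XX_k$ to (i) controlling $\<t-r\>\Box\Ga^\al\phi = \<t-r\> g_\al$ in $L^2$, (ii) controlling the mixed good-derivative terms $|\bar D D\Ga^\al\phi|$ in weighted $L^2$, which is exactly what the ghost weight energy $\DD_N$ (the terms $\|(\om\d_t\Ga^\al\phi + \nabla\Ga^\al\phi)/\<t-r\>\|_{L^2}$ and $\|\Ga^\al u/\<t-r\>\|_{L^2}$) is designed to supply, and (iii) the lower-order term $|D\Ga^\al\phi|/\<t-r\>$, which is again controlled by $\DD_N$ or by the decay estimates of Lemma \ref{Decay-phi0}. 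For the region $r > 2\<t\>$ one uses the weaker weight $\<t\>$ and a simpler Klainerman--Sobolev / Hardy argument, since there $\<t-r\> \approx \<r\> \gtrsim \<t\>$ anyway.

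For the lower-order bound \eqref{XX_ka-2}, I would take $|\al|\le N-2$ (wait — $\XX_{N-2}$ involves $|\al|\le N-3$ by definition \eqref{XX}, so we have two derivatives of room below $N$), and estimate $\<t-r\>g_\al$ directly in $L^2$. The nonlinearity $g_\al$ from \eqref{gal} consists of the quadratic terms $\d_t\Ga^\be u\cdot\nabla\Ga^\ga\phi$, $\Ga^\be u\cdot\nabla\d_t\Ga^\ga\phi$, and the cubic term $\Ga^\be u\cdot\nabla(\Ga^\ga u\cdot\nabla\Ga^\si\phi)$. The key observations already recorded in the paper are: the worst quadratic piece $u\cdot\nabla\d_t\phi$ decomposes as $u\cdot\om\,\d_r\d_t\phi + u\cdot\frac{\om^\perp}{r}\d_\theta\d_t\phi$, where $u\cdot\om \lesssim \ep\<t\>^{-1}$ by \eqref{omf} (divergence-free structure!), giving the first term an integrable-in-$L^2$-time weight after pairing with $\<t-r\>|\d_r\d_t\phi| \lesssim \XX^{1/2}$-type quantities; and the second term, being a good tangential derivative $\frac{1}{r}\d_\theta$, is absorbed into $\DD$ near the cone where $\<t-r\>\lesssim\<t\>$. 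For each factor one distributes derivatives so that the high-order factor sits in an $L^2$ energy norm and the low-order factor in an $L^\infty$ decay norm from Proposition \ref{Dec-lem} and Lemma \ref{Decay-phi}; since $N\ge 10$ there is always enough regularity to put $[\,|\al|/2\,]+C \le N-2$ derivatives on the decaying factor. Summing, one gets $\<t-r\>\|g_\al\|_{L^2}\lesssim \ep(E^{1/2}_{N-2} + \XX^{1/2}_{N-2})$ plus a genuinely quadratic term $\lesssim \ep^2$, and the $\XX^{1/2}_{N-2}$ piece is absorbed by the left side for $\ep$ small, yielding \eqref{XX_ka-2}.

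For the higher-order bound \eqref{XX_ka}, the same scheme applies with $|\al|\le N-1$, but now one cannot afford to lose derivatives on both factors simultaneously, so when the high-order factor carries close to $N$ derivatives the low-order factor carries few and one uses the \emph{sharp} decay estimates (and, crucially, the dissipative terms): terms like $\<t-r\>\,\Ga^\be u\cdot\nabla\d_t\Ga^\ga\phi$ with $|\ga|$ near $N$ are estimated by pulling out $\<t\>^{1/2}\|\nabla\Ga^\ga\d_t\phi\|$-type quantities bounded by $\<t\>^{1/2}\DD_N$, with the remaining $\Ga^\be u$ factor contributing $\ep\<t\>^{-1/2}$ via \eqref{Decay-u} — multiplying gives the $\ep\<t\>^{1/2}\DD_N$ on the right of \eqref{XX_ka}; the cubic terms and lower-order pieces produce the $\ep$ constant. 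The main obstacle, as the introduction flags, is precisely term (i)–(ii) handling of $u\cdot\nabla\d_t\phi$: one must simultaneously exploit the divergence-free bound $|\om\cdot u|\lesssim\ep\<t\>^{-1}$ for the radial part and the ghost-weight/null structure for the angular part, and do the bookkeeping so that neither $\XX_N$ on the right (which must be absorbed) nor an uncontrolled power of $\ln t$ appears — getting the clean $\ep + \ep\<t\>^{1/2}\DD_N$ with no logarithm requires the careful region decomposition ($r\le 2\<t\>$ vs.\ $r>2\<t\>$, and near vs.\ away from the cone) and using that $\<t\>^{1/2}\DD_N$ already carries the borderline weight. The lower-order case \eqref{XX_ka-2} is easier only because there the $\ep^2$ term is harmless and one has spare derivatives; the higher-order case is where the structure of the acoustical metric really has to be used.
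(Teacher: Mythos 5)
The central tool you propose --- the pointwise bound
\[
\<t-r\>|D^2\psi| \lesssim \<t-r\>|\Box\psi| + |\bar D D\psi| + \frac{1}{\<t-r\>}|D\psi|\,,
\]
with $\bar D$ denoting good (null) derivatives --- is false, and it is not what the paper uses. A purely radial pulse $\psi(t,x)=f(t-r)$ with $f(\sigma)=k^{-1}\sin(k\sigma)\chi(\sigma)$ for large $k$ (and $\chi$ a fixed cutoff near $\sigma=0$) has $|D^2\psi|\sim k$ on its support, while in $\R^2$ one computes $\Box\psi = r^{-1}f'=O(r^{-1})$, all null derivatives $(\d_t+\d_r)\psi$, $r^{-1}\d_\theta\psi$ and hence all $\bar D D\psi$ vanish, and $|D\psi|\sim 1$. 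On the light cone $\<t-r\>\sim 1$, so the left side is $\sim k$ and the right side is $O(1)$ --- contradiction for $k$ large. The inequality that does hold, and which the paper invokes as Lemma \ref{XX-lemma} (Klainerman--Sideris \cite{KS96}), has the \emph{stronger} weight $\<t\>\|(\d_t^2-\De)\psi\|_{L^2}$ on the right together with vector-field second derivatives of type $|\Ga D\psi|$, which are absorbed into the generalized energy $E$ rather than being supplied by $\DD_N$ (in the example, $S\psi=(t-r)f'$, so $|SD\psi|$ does capture $\<t-r\>|f''|$). Consequently one must control $\<t\>\|g_\al\|_{L^2}$ rather than $\<t-r\>\|g_\al\|_{L^2}$, and the extra factor $\<t\>/\<t-r\>$ near the light cone is exactly where the difficulty of Lemma \ref{Boxphi} lies; your weaker target would make the nonlinear estimates considerably easier than they actually are.

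A second gap is the role you assign to $\DD_N$. The ghost-weight energy does not control $\|\bar D D\Ga^\al\phi\|_{L^2}$ in any weighted form: it contains $\|(\om\d_t\Ga^\al\phi+\nab\Ga^\al\phi)/\<t-r\>\|_{L^2}$, which is a \emph{first}-order good derivative with a factor $\<t-r\>^{-1}$, along with $\|\nab\Ga^\al u\|_{L^2}$ and $\|\Ga^\al u/\<t-r\>\|_{L^2}$. In the paper's proof of the higher-order bound \eqref{Cont-phi-ka}, $\DD_N$ enters through the velocity factors of $g_\al$: via \eqref{dtu-L2al} one has $\|\d_t\Ga^{N-1}u\|_{L^2}\lesssim\|\nab\Ga^N u\|_{L^2}(1+E^{1/2}_N)+\cdots\lesssim\DD_N+\cdots$, and near the light cone the ghost-weight term $\|\om\cdot\Ga^{N-1}u/\<t-r\>\|_{L^2}\lesssim\DD_{N-1}$ is paired against the weighted $L^\infty$ decay of $D^2\phi$ --- this is the ``good unknown $u$'' mechanism the introduction emphasizes. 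Your structural observations (the divergence-free bound $|\om\cdot u|\lesssim\ep\<t\>^{-1}$, the angular good derivative, the near/away cone split) are correct and are indeed the right ingredients for Lemma \ref{Boxphi}, but the reduction from $\XX_N$ to those ingredients must go through the Klainerman--Sideris estimate with the $\<t\>$ weight and the energy $E_N$, not through the identity and $\DD_N$-supply you propose.
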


\bigskip 
We begin with the Klainerman-Sideris type estimate for wave operator, which was first established in \cite{KS96}.
\begin{lemma}  \label{XX-lemma}
	There holds
	\begin{equation}    \label{Control-X}
		\mathcal X^{1/2}_1\lesssim E^{1/2}_1+\<t\>\|(\d^2_t-\De)\phi\|_{L^2}\,.
	\end{equation}
\end{lemma}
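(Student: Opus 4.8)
\textbf{Proof proposal for Lemma \ref{XX-lemma}.}

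The plan is to reconstruct the second-order spatial derivatives $D^2\phi$ from the wave operator $\Box\phi = (\d_t^2-\De)\phi$ together with the Klainerman vector fields $\Ga$, following the classical Klainerman--Sideris argument \cite{KS96}. The starting point is the pointwise algebraic identity expressing derivatives in terms of the scaling and rotation fields. Writing $\nab = \om\d_r + \tfrac{\om^\perp}{r}\d_\th$ and using $S = t\d_t + r\d_r$, one isolates $\d_r^2\phi$, $\d_r\d_t\phi$ and $\d_t^2\phi$ in the region $r\approx t$ where the weight $\<t-r\>$ degenerates; away from the light cone the weight is comparable to $\<t\>$ and the bound is easier. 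The key inequality to establish pointwise is
\begin{equation*}
	\<t-r\>\,|D^2\phi| \lesssim |Z\nab_{t,x}\phi| + \<t\>\,|\Box\phi| + |\nab_{t,x}\phi|\,,
\end{equation*}
valid in $r\le 2\<t\>$, where $Z$ ranges over $\{\d_t,\d_1,\d_2,\tilde\Om\}$; in $r>2\<t\>$ one instead bounds $\<t\>|D^2\phi|$ directly by $|Z\nab_{t,x}\phi|$.

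The derivation of this pointwise bound is the heart of the matter. First I would note the operator identity $(t^2-r^2)\d_t^2 = $ (combination of $\Box$, $S^2$, $S$, $\d_t S$, $\Om$-terms, and lower order), which one gets by writing $\d_t^2\phi = \Box\phi + \De\phi$ and then expressing both $\De$ and $\d_t^2$ through $S$, $\Om$ and first derivatives after multiplying by the hyperbolic weight $t^2-r^2$. Since $t^2-r^2 = (t-r)(t+r)$ and $t+r\approx \<t\>$ on $r\le 2\<t\>$, dividing through produces the factor $\<t-r\>$ on the left and $\<t\>$ on the right in front of $\Box\phi$. A symmetric manipulation handles the mixed derivative $\d_t\d_r\phi$ and the purely spatial $\d_r^2\phi$; the angular second derivatives $\tfrac{1}{r^2}\d_\th^2\phi$ and the mixed $\d_r(\tfrac1r\d_\th\phi)$ are already controlled (with an even better weight) by $\Om$-derivatives and Hardy-type bounds, since $\Om = \d_\th$ commutes with the flat wave operator and $r\gtrsim \<t-r\>$ is false only near the origin, where $\<t-r\>\approx\<t\>$ and everything is trivial.

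Having the pointwise bound, I would square it, integrate in $x$ over $\{r\le 2\<t\>\}$ and $\{r>2\<t\>\}$ separately, and match the result against the definitions \eqref{XX} of $\XX_1$ and \eqref{XX} of... the energy: the term $\|Z\nab_{t,x}\phi\|_{L^2}$ together with $\|\nab_{t,x}\phi\|_{L^2}$ is bounded by $E_1^{1/2}$ since $Z\in\{\d_t,\d_1,\d_2,\tilde\Om\}\subset\Ga$, and the term $\<t\>\|\Box\phi\|_{L^2}$ is exactly what appears on the right of \eqref{Control-X}. The only subtlety is the potential Hardy-inequality loss when dividing by $r$ in the angular terms: one uses that $\tfrac1r\d_\th\phi = \tfrac{\om^\perp}{?}\cdot\nab\phi$ is a genuine component of $\nab\phi$, so no division by $r$ actually occurs, and $\tfrac1{r^2}\d_\th^2\phi$ similarly rewrites as $\d_j\d_k\phi$ contracted with bounded coefficients plus $\tfrac1r$ times first derivatives which is again a component of $\nab\nab\phi$ up to bounded factors near $r\approx t$. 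I expect the main obstacle to be precisely the careful bookkeeping of these angular/origin contributions so that no uncompensated factor of $r^{-1}$ survives; this is routine but must be done with the decomposition $\nab = \om\d_r + \tfrac{\om^\perp}{r}\d_\th$ kept explicit throughout. The cleanest route is simply to cite \cite{KS96} for the flat-space inequality and verify that the acoustical-metric terms are not used here (the lemma is stated for the flat operator $\d_t^2-\De$, with the metric contribution moved to the right-hand side as part of $\Box\phi$).
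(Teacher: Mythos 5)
The paper does not prove this lemma: it states the inequality with the remark that it ``was first established in \cite{KS96}'' and offers only the citation. Your proposal correctly identifies the Klainerman--Sideris mechanism, and you end by observing that the cleanest route is simply to cite \cite{KS96} and check that the acoustical-metric contribution has been absorbed into $\Box\phi$ on the right --- which is exactly what the paper does.

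On the reconstruction itself, two corrections are in order. (a) The pointwise inequality you display lists $Z\in\{\d_t,\d_1,\d_2,\tilde\Om\}$, but the scaling field $\tilde S$ is indispensable for extracting the weight $\<t-r\>$; indeed your own derivation sketch invokes $S^2$, $S$ and $\d_t S$. The right-hand side should be $\sum_{\Ga}|\Ga\nab_{t,x}\phi|+\<t\>|\Box\phi|+|\nab_{t,x}\phi|$ over the \emph{full} family $\Ga=\tilde S^{\al_1}Z^{\al'}$ with $|\al|\le 1$, which is still $\lesssim E_1^{1/2}$ after squaring and integrating because $\tilde S$ is part of $\Ga$. (b) The Klainerman--Sideris estimate is an $L^2$ estimate, not a pointwise one. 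Expanding $\d_i\d_j$ in the frame $\nab=\om\d_r+\tfrac{\om^\perp}{r}\d_\th$ generates genuine curvature terms of the form $\tfrac{1}{r}\d_r\phi$ and $\tfrac{1}{r}\cdot\tfrac{1}{r}\d_\th\phi$ from differentiating the frame vectors $\om$, $\om^\perp$ (the $\tfrac{1}{r}\d_r$ term is the one visible in the polar Laplacian); multiplied by $\<t-r\>\approx\<t\>$ near $r=0$ these are not pointwise bounded by anything useful, and must instead be absorbed in $L^2$ by Hardy-type inequalities. In two dimensions this requires the extra care you gesture at, since the plain Hardy bound $\|f/r\|_{L^2(\R^2)}\lesssim\|\nab f\|_{L^2}$ fails without additional information. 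You flag this issue correctly, but the resolution is to run the whole argument at the $L^2$ level rather than to salvage a pointwise bound. These are proof-level caveats only; your identification of the argument and your ultimate recommendation to cite \cite{KS96} both match what the paper actually does.
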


Next, we utilize the $\Ga^\al\phi$-equations in \eqref{Main_Sys_VecFie} to control the last term on the right hand side.

\begin{lemma}\label{Boxphi}
	Let $N\geq 10$. Assume that $(u,\phi)$ is a solution of \eqref{Main_Sys} satisfying \eqref{Main_Prop_Ass1}, there holds
	\begin{align}  \label{Cont-phi_ka-2}
		\<t\>\|(\d_t^2-\De)\Ga^{ N-3}\phi\|_{L^2}\lesssim \ep^2\,.
	\end{align}
and the higher-order estimates
\begin{equation}	\label{Cont-phi-ka}
	\<t\>\|(\d_t^2-\De)\Ga^{ N-1}\phi\|_{L^2}\lesssim \ep^2+ \ep \<t\>^{1/2} \DD_{N} +\ep \XX^{1/2}_N\,.
\end{equation}
\end{lemma}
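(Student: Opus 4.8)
\textbf{Proof strategy for Lemma \ref{Boxphi}.}
The plan is to start from the $\Ga^\al\phi$-equation in \eqref{Main_Sys_VecFie}, which gives
$(\d_t^2-\De)\Ga^\al\phi = g_\al$ with $g_\al$ as in \eqref{gal}, and to estimate $\<t\>\|g_\al\|_{L^2}$ term by term, distinguishing the two ranges $|\al|\le N-3$ (where we aim for $\ep^2$) and $|\al|\le N-1$ (where we must allow the extra $\ep\<t\>^{1/2}\DD_N + \ep\XX_N^{1/2}$). Recall $g_\al$ has three types of pieces: the ``quasilinear'' quadratic terms $\d_t\Ga^\be u\cdot\nab\Ga^\ga\phi$ and $\Ga^\be u\cdot\nab\d_t\Ga^\ga\phi$ coming from the second-order material derivative, and the cubic term $\Ga^\be u\cdot\nab(\Ga^\ga u\cdot\nab\Ga^\si\phi)$. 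In each product I will put the low-index factor in $L^\infty$ using the decay estimates of Proposition \ref{Dec-lem} and Lemmas \ref{Decay-phi0}, \ref{Decay-phi}, and the high-index factor in $L^2$ controlled by $E_{k}^{1/2}$, $\DD_k$, or $\XX_k^{1/2}$.

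\textbf{Key steps.} First I would handle the cubic term: writing $\Ga^\be u\cdot\nab(\Ga^\ga u\cdot\nab\Ga^\si\phi)$ and distributing the derivative, the worst case puts two low-index $u$-factors in $L^\infty$ (decaying like $\<t\>^{-1/2}$ each by \eqref{Decay-u}, \eqref{decay-du0}) times a high-index $\nab\Ga\phi$ or $\nab^2\Ga\phi$ in $L^2$; since $N\ge 10$ we can always arrange at most one factor to carry $\sim N$ derivatives, so the product is $\lesssim \<t\>^{-1}\ep^2(E_{\le N}^{1/2}+\XX_{\le N}^{1/2})$, giving $\lesssim \ep^2$ after multiplying by $\<t\>$ (and, for $|\al|\le N-1$, the $\ep\XX_N^{1/2}$ term absorbs the top-order case). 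Second, the crucial quadratic terms: I would use the null/radial decomposition $\nab = \om\d_r + \frac{\om^\perp}{r}\d_\th$ emphasized in the introduction. For $\Ga^\be u\cdot\nab\d_t\Ga^\ga\phi = u\cdot\om\,\d_r\d_t\phi + \frac{1}{r}(u\cdot\om^\perp)\d_\th\d_t\phi$ (with vector fields attached): the first piece uses $|u\cdot\om|\lesssim \ep\<t\>^{-1}$ from \eqref{omf}, giving integrable decay times a high $L^2$ norm of $D^2\Ga\phi$; the $\d_\th$ piece converts into an $\Om\Ga\phi$-type term whose $L^2$ norm is energy-controlled, and away from/near the cone one balances $1/r$ against $\<t\>^{-1}$ or uses the ghost-weight factor $\<t-r\>^{-1}$ hidden in $\DD_N$ and $\XX_N$. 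The term $\d_t\Ga^\be u\cdot\nab\Ga^\ga\phi$ is estimated similarly, now using the $L^\infty$ decay of $\d_t\Ga u$ from \eqref{decay-dtu} (for low index) paired with $\|\nab\Ga\phi\|_{L^2}\lesssim E^{1/2}$, or with $\d_t\Ga u$ in $L^2$ (via \eqref{dtu-L2al}, \eqref{dtu-L2}) paired with $\|\nab\Ga\phi\|_{L^\infty}$ from \eqref{AwayCone0}, \eqref{NearCone0}. For the lower-order claim \eqref{Cont-phi_ka-2} all factors can be taken at index $\le N-3+2 \le N-1$, so everything closes with two smallness factors; for \eqref{Cont-phi-ka} the top-order factor forces the appearance of $\DD_N$ (from $\|\nab\Ga^N u\|_{L^2}$ and the ghost-weight quantities) and $\XX_N^{1/2}$ (from $\|\<t-r\>D^2\Ga^{N-1}\phi\|_{L^2}$ near the cone), which is exactly the right-hand side allowed.

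\textbf{Main obstacle.} The delicate point is the term $\Ga^\be u\cdot\nab\d_t\Ga^\ga\phi$ at top order near the light cone $r\approx t$, where $\phi$ decays only like $t^{-1/2}$: here neither factor alone has enough decay, and one must exploit simultaneously the improved decay $|u\cdot\om|\lesssim \ep\<t\>^{-1}$ for the radial component and the ghost-weight gain $\<t-r\>^{-1}$ to trade the $\frac{\om^\perp}{r}\d_\th$ component against $\XX_N^{1/2}$ or $\DD_N$; getting the powers of $\<t\>$, $\<t-r\>$ and $r$ to match so that $\<t\>\|g_\al\|_{L^2}\lesssim \ep^2 + \ep\<t\>^{1/2}\DD_N + \ep\XX_N^{1/2}$ without any uncontrolled growth is the heart of the estimate. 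A secondary bookkeeping issue is tracking which single factor is allowed to carry $\sim N$ derivatives so that all other factors fall into the range where the $L^\infty$ decay estimates of Proposition \ref{Dec-lem} are available (this needs $N\ge 10$, e.g. so that $[N/2]+4\le N$).
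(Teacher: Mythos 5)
Your proposal follows essentially the same route as the paper: split $g_\al$ into the two quasilinear quadratic terms and the cubic term, estimate $\<t\>\|g_\al\|_{L^2}$ by region (away from vs.\ near the light cone), use the radial decomposition $\nab=\om\d_r+\frac{\om^\perp}{r}\d_\th$ together with $\div u=0$ and \eqref{omf} to get the improved bound $|r\,\om\cdot u|\lesssim\ep$, pair low-index factors in $L^\infty$ (via Proposition \ref{Dec-lem}, Lemmas \ref{Decay-phi0}–\ref{Decay-phi}) with the high-index factor in $L^2$, and at top order let the loss land on $\DD_N$ (through $\|\nab\Ga^N u\|_{L^2}$ and the ghost-weight $L^2$ quantities) and $\XX_N^{1/2}$. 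One small inaccuracy worth flagging: in the paper's argument $\XX_N^{1/2}$ is actually extracted \emph{away} from the light cone, where $\<t\>\approx\<t-r\>$ lets one trade the $\<t\>$ prefactor against the $\<t-r\>$-weight in $\XX_N$; in the near-cone region $\frac{2}{3}\<t\>\lesssim r\lesssim \frac{5}{4}\<t\>$ the $\<t-r\>$ weight is too weak for this, and the paper instead relies on the $|u\cdot\om|\lesssim\ep r^{-1}$ gain, the pointwise bound $\|r^{1/2}\<t-r\>D^2\Ga\phi\|_{L^\infty}$ from Lemma \ref{Decay-phi}, the $L^\infty$ bound on $\Om\Ga\phi$, and $\<t\>^{1/2}\DD_{N}$.
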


Then we will prove the Proposition \ref{propXX} assuming that the Lemma \ref{Boxphi} holds.
\begin{proof}[Proof of Proposition \ref{propXX}.]
	\

	The bound \eqref{XX_ka-2} is obtained by \eqref{Control-X} and \eqref{Cont-phi_ka-2}.
	For the second bound \eqref{XX_ka}, by \eqref{Control-X} and \eqref{Cont-phi-ka} we have
	\begin{align*}
		\XX^{1/2}_{N}&\lesssim \ep+ \ep \<t\>^{1/2} \DD_{N} +\ep^2+\ep \XX^{1/2}_N\,.
	\end{align*}
	The last term can be absorbed by the left hand side. Then the desired bound \eqref{XX_ka} follows.
\end{proof}

\bigskip

Finally, we turn to the proof of Lemma \ref{Boxphi}, which completes the proof of Proposition \ref{propXX}.  

\begin{proof}[Proof of the bound \eqref{Cont-phi_ka-2}]
	
	\ 
	
	\medskip 
	By the $\Ga^\al \phi$-equations in \eqref{Main_Sys_VecFie}, for any $|\al|\leq N-3$ we have
	\begin{align*}
		\<t\>\|(\d^2_t-\De)\Ga^\al \phi\|_{L^2}&\les \sum_{\be+\ga=\al} \<t\>\|\Ga^\be u\cdot\nab\d_t\Ga^\ga\phi\|_{L^2}+\sum_{\be+\ga=\al} \<t\>\|\d_t\Ga^\be u\cdot\nab\Ga^\ga\phi\|_{L^2}\\
		&\quad+\sum_{\be+\ga+\de=\al} \<t\>\|\Ga^\be u\cdot\nab(\Ga^\ga u\cdot\nab \Ga^\de \phi)\|_{L^2}\\
		&=B_1+B_2+B_3\,.
	\end{align*}

\medskip 
\emph{Step 1: We claim that for any $|\al|\leq N-3$}
    \begin{align*}
    	\<t\>\|(\d_t^2-\De)\Ga^{ \al}\phi\|_{L^2}\lesssim \ep^2+\ep \XX^{1/2}_{N-2}+\ep\XX_{N-2}\,.
    \end{align*}
    
\bigskip 

\emph{i) Estimates of $B_1$.} 

\medskip 

When $r<\frac{2}{3}\<t\>$, by \eqref{XX} and \eqref{AwayCone} we have
\begin{align*}
	&\sum_{|\be+\ga|\leq N-3} \<t\>\|\Ga^\be u\cdot\nab\d_t\Ga^\ga\phi\|_{L^2(r\leq 2\<t\>/3)}\\
	&\lesssim  \|\Ga^{[(N-3)/2]}u\|_{L^\infty}\|\<t-r\>\nab\d_t\Ga^{N-3} \phi\|_{L^2(r\leq 2\<t\>/3)}+\|\Ga^{N-3}u\|_{L^2}\|t\nab\d_t\Ga^{[(N-3)/2]}\phi\|_{L^\infty(r\leq 2\<t\>/3)}\\
	&\lesssim E^{1/2}_{[(N-3)/2]+2}\XX^{1/2}_{N-2}+E^{1/2}_{N-2}\XX^{1/2}_{[(N-3)/2]+3}\\
	&\les E^{1/2}_{N-2}\XX^{1/2}_{N-2}\,.
\end{align*}
When $r\geq \frac{2}{3}\<t\>$, by \eqref{omf} and we split the integrand into
\begin{align*}
	&\sum_{|\be+\ga|\leq N-3} \<t\>\|\Ga^\be u\cdot\nab\d_t\Ga^\ga\phi\|_{L^2(r\geq 2\<t\>/3)}\\
	&=\sum_{|\be+\ga|\leq N-3} \|\<t\>\Ga^\be u\cdot(\om\d_r\d_t\Ga^\ga\phi+\frac{\om^\bot}{r}\Om\d_t\Ga^\ga \phi)\|_{L^2(r\geq 2\<t\>/3)}\\
	&\les \sum_{|\be+\ga|\leq N-3} (\|r\Ga^\be u\cdot\om\d_r\d_t\Ga^\ga\phi\|_{L^2(r\geq 2\<t\>/3)}+\|\Ga^\be u \d_t\Om\Ga^\ga \phi\|_{L^2(r\geq 2\<t\>/3)})\\
	&\les E^{1/2}_{N-1}E^{1/2}_{N-2}+E^{1/2}_{N-3}E^{1/2}_{N-2}\\
	&\les E^{1/2}_{N-1}E^{1/2}_{N-2}\,.
\end{align*}

Since $N\geq 10$ and $|\al|\leq N-3$ we obtain
\begin{align*}
	B_1\lesssim E^{1/2}_{N-2}\XX^{1/2}_{N-2}+E^{1/2}_{N-1}E^{1/2}_{N-2}\,.
\end{align*}

\medskip 

\emph{ii) Estimates of $B_2$.} 

\medskip 

When $r\geq \frac{2}{3}\<t\>$, we write $B_2$ as 
\begin{align*}
	B_2&=\sum_{\be+\ga=\al}\<t\>\big\|\d_t \Ga^\be u\cdot (\om\d_r \Ga^\ga\phi+\frac{\om^\bot}{r}\Om \Ga^\ga\phi)\big\|_{L^2}\\
	&=\sum_{\be+\ga=\al} \big\|\frac{\<t\>}{r}(r\om\cdot \d_t \Ga^\be u \d_r \Ga^\ga\phi)+\frac{\<t\>\om^\bot}{r}\d_t \Ga^\be u\Om \Ga^\ga\phi\big\|_{L^2}\,.
\end{align*}
Then by \eqref{omf} and  \eqref{phi-lf} we have
\begin{align*}
	&\sum_{\be+\ga=\al} \<t\>\|\d_t\Ga^\be u\cdot\nab\Ga^\ga\phi\|_{L^2(r\geq 2\<t\>/3)}\\
	&\les \sum_{\be+\ga=\al} \|(r\om\cdot\d_t\Ga^\be u) \d_r\Ga^\ga\phi\|_{L^2}+\|\d_t \Ga^\be u \Om \Ga^\ga\phi\|_{L^2}\\
	&\les \|r\om\cdot\d_t\Ga^{N-3} u\|_{\Lf} \|\d_r\Ga^\ga\phi\|_{L^2}+\|\d_t \Ga^\be u\|_{L^2} \|\Om \Ga^\ga\phi\|_{\Lf}\\ 
	&\les E^{1/2}_N E^{1/2}_{N-2}+E^{1/2}_{N-2}E^{1/2}_{N-1}\\
	&\les E^{1/2}_N E^{1/2}_{N-2}\,.
\end{align*}

When $r<\frac{2}{3}\<t\>$, by \eqref{decay-dtu}, \eqref{dtu-L2} and \eqref{AwayCone0} we have
\begin{align*}
	B_2&\lesssim \|t\d_t u\|_{\Lf}\|\nab\Ga^{N-3}\phi\|_{L^2}+\|\d_t\Ga^{N-3} u\|_{L^2}\|t\nab\Ga^{N-4}\phi\|_{\Lf}\\
	&\lesssim (E^{1/2}_{4}+\XX_{4}^{1/2}+\XX_{4}^{3/2})E^{1/2}_{N-3}+ \<t\>^{-1/2+\de}(E^{1/2}_{N}+\XX^{1/2}_{[(N-3)/2]+4})(E^{1/2}_{N-4}+\XX^{1/2}_{N-2})\\
	&\lesssim \ep(\ep+\XX^{1/2}_{N-2}+\XX^{3/2}_{N-2})+(\ep+\XX^{1/2}_{N-2})(\ep+\XX^{1/2}_{N-2})\\
	&\les \ep^2+\ep \XX^{1/2}_{N-2}+\ep\XX_{N-2}^{3/2}\,.
\end{align*}

\medskip 

\emph{iii) Estimates of $B_3$.}

\medskip 

The term $B_3$ is rewritten as 
\begin{align*}
	B_3=\sum_{\be+\ga+\de=\al} \<t\>\|\Ga^\be u\cdot(\Ga^\ga u\cdot\nab\nab \Ga^\de \phi)\|_{L^2}+\sum_{\be+\ga+\de=\al} \<t\>\|(\Ga^\be u\cdot\nab\Ga^\ga u)\cdot\nab \Ga^\de \phi\|_{L^2}=:B_{31}+B_{32}\,.
\end{align*}

We bound the term $B_{31}$ first. By symmetry we assume $|\be|\leq [(N-3)/2]$. Then in the region $r<2\<t\>/3$, similar to the \emph{i) Estimates of $B_1$} we have
\begin{align*}
	&\sum_{|\be+\ga+\de|\leq N-3;|\be|\leq  [(N-3)/2]} \<t\>\|\Ga^\be u\cdot\Ga^\ga u\cdot\nab\nab \Ga^\de \phi\|_{L^2(r<2\<t\>/3)}
	\les \|\Ga^{ [(N-3)/2]}u\|_{\Lf} E^{1/2}_{N-2}\XX^{1/2}_{N-2}\les E_{N-2}\XX^{1/2}_{N-2}\,.
\end{align*}
In the region $r\geq 2\<t\>/3$, we have
\begin{align*}
	&\sum_{|\be+\ga+\de|\leq N-3;|\be|\leq  [(N-3)/2]} \<t\>\|\Ga^\be u\cdot\Ga^\ga u\cdot\nab\nab \Ga^\de \phi\|_{L^2(r\geq 2\<t\>/3)}\\
	&=\sum_{|\be+\ga+\de|\leq N-3;|\be|\leq  [(N-3)/2]} \<t\>\big\|\Ga^\ga u\Ga^\be u\cdot(\om\d_r+\frac{\om^\bot}{r}\Om)\nab \Ga^\de \phi\big\|_{L^2(r\geq 2\<t\>/3)}\\
	&\les \|\Ga^{N-3}u\|_{L^2}\big\|r\Ga^{[(N-3)/2]} u\cdot(\om\d_r+\frac{\om^\bot}{r}\Om)\nab \Ga^{[(N-3)/2]} \phi\big\|_{\Lf(r\geq 2\<t\>/3)}\\
	&\quad+\|\Ga^{[(N-3)/2]}u\|_{\Lf}\big\|r\Ga^{[(N-3)/2]} u\cdot(\om\d_r+\frac{\om^\bot}{r}\Om)\nab \Ga^{N-3} \phi\big\|_{L^2(r\geq 2\<t\>/3)}\,.
\end{align*}
We then use \eqref{omf} and Sobolev embeddings to bound this by
\begin{align*}
	&E^{1/2}_{N-3}(\|r\om\cdot \Ga^{[(N-3)/2]}u\|_{\Lf}+E^{1/2}_{[(N-3)/2]+2})E^{1/2}_{[(N-3)/2]+3}\\
	&+E^{1/2}_{[(N-3)/2]+2}(\|r\om\cdot \Ga^{[(N-3)/2]}u\|_{\Lf}+E^{1/2}_{[(N-3)/2]+2})E^{1/2}_{N-2}
	\les E_{N-2}^{3/2}\,.
\end{align*}

For the second term $B_{32}$, when $|\be|\geq [(N-3)/2]$, by \eqref{Decay-u} and \eqref{AwayCone0} we have
\begin{align*}
	&\<t\>\|\Ga^{N-3} u\cdot\nab \Ga^{[(N-3)/2]} u\cdot\nab \Ga^{[(N-3)/2]}\phi\|_{L^2(r<2\<t\>/3)}\\
	&\lesssim \|\Ga^{N-3} u\|_{L^2}\|\nab\Ga^{[(N-3)/2]} u\|_{\Lf}\|\<t\>\nab\Ga^{[(N-3)/2]}\phi\|_{\Lf(r< \frac{2}{3}\<t\> )}\\
	&\lesssim E^{1/2}_{N-3} \<t\>^{-1/2+\de}(E^{1/2}_{[(N-3)/2]+3}+\XX^{1/2}_{[(N-3)/2]+3})  \ln^{1/2}(e+t)(E^{1/2}_{[(N-3)/2]}+\XX^{1/2}_{[(N-3)/2]+2})\\
	&\les E_{N-2}(E^{1/2}_{N-2}+\XX^{1/2}_{N-2})(E^{1/2}_{N-2}+\XX^{1/2}_{N-2})\,.
\end{align*}
and by \eqref{rv}
\begin{align*}
	&\<t\>\|(\Ga^{N-3} u\cdot\nab \Ga^{[(N-3)/2]} u)\cdot\nab \Ga^{[(N-3)/2]}\phi\|_{L^2(r\geq 2\<t\>/3)}\\
	&\lesssim \|\Ga^{N-3} u\|_{L^2}\|r \om_j \nab \Ga^{[(N-3)/2]} u_j \|_{\Lf}\|\d_r\Ga^{[(N-3)/2]}\phi\|_{\Lf}\\
	&\quad+\|\Ga^{N-3} u\|_{L^2}\|\nab \Ga^{[(N-3)/2]} u\|_{\Lf}\|\Om\Ga^{[(N-3)/2]}\phi\|_{\Lf}\\
	&\les E^{3/2}_{N-2}\,.
\end{align*}
When $|\be|<[(N-3)/2]$ and $|\de|\leq |\al|-1\leq N-4$, by \eqref{Decay-u} and \eqref{AwayCone0} we have
\begin{align*}
	&\sum_{|\ga+\de|\leq N-3}\<t\>\|\Ga^{[(N-3)/2]} u\cdot\nab \Ga^\ga u\cdot\nab\Ga^\de\phi\|_{L^2(r< 2\<t\>/3)}\\
	&\lesssim \|\Ga^{[(N-3)/2]} u\|_{L^\infty }\|\nab\Ga^{N-3} u\|_{L^2}\|\<t\>\nab\Ga^{N-4}\phi\|_{\Lf(r<\frac{2}{3}\<t\>)}\\
	&\les\<t\>^{-1/2+\de}(E^{1/2}_{N-2}+\XX^{1/2}_{N-2})E^{1/2}_{N-2} (E^{1/2}_{N-4}+\XX^{1/2}_{N-2})\ln^{1/2}(e+t)\\
	&\lesssim E_{N-2} (E^{1/2}_{N-2}+\XX^{1/2}_{N-2}+E^{1/2}_{N-2}\XX^{1/2}_{N-2}+\XX_{N-2})
\end{align*}
and by \eqref{rv} we have
\begin{align*}
	&\sum_{|\ga+\de|\leq N-3}\<t\>\|\Ga^{[(N-3)/2]} u\cdot\nab \Ga^\ga u\cdot\nab\Ga^\de\phi\|_{L^2(r\geq 2\<t\>/3)}\\
	&\lesssim \|r^{1/2}\Ga^{[(N-3)/2]} u\|_{\Lf}(\|\nab \Ga^{N-3} u\|_{L^2}\|r^{1/2}\nab\Ga^{[(N-3)/2]}\phi\|_{\Lf}+\|r^{1/2}\nab \Ga^{[(N-3)/2]} u\|_{\Lf}\|\nab\Ga^{N-3}\phi\|_{L^2})\\
	&\lesssim E^{1/2}_{[(N-3)/2]+2}  E^{1/2}_{N-2}E^{1/2}_{[(N-3)/2]+3}\les E^{3/2}_{N-2}\,.
\end{align*}
Finally, when $|\de|=|\al|=N-3$, we have
\begin{align*}
	\|tu\cdot\nab u\cdot\nab\Ga^{N-3}\phi\|_{L^2}&\lesssim \|t^{1/2}u\|_{\Lf}\|t^{1/2}\nab u\|_{\Lf}\|\nab\Ga^\al\phi\|_{L^2}\\
	&\lesssim E_{N-2}(1+\XX_{N-2})E^{1/2}_{N-2}
\end{align*}

Therefore, 
\begin{align*}
	\<t\>\|(\d^2_t-\De)\Ga^{N-3}\phi\|_{L^2}&\les \ep \XX^{1/2}_{N-2}+\ep^2+\ep \XX^{3/2}_{N-2}+\ep^3\les \ep^2+\ep \XX^{1/2}_{N-2}+\ep\XX_{N-2}^{3/2}\,.
\end{align*}
The bound \eqref{Cont-phi_ka-2} is obtained.
\end{proof}	
	
\bigskip 
\begin{proof}[Proof of the bound \eqref{Cont-phi-ka}]
	
	\
	
	By the $\Ga^\al \phi$-equations in \eqref{Main_Sys_VecFie}, for any $|\al|\leq N-1$ we have
	\begin{align*}
		\<t\>\|(\d^2_t-\De)\Ga^\al \phi\|_{L^2}&\leq \sum_{\be+\ga=\al} \<t\>\|\Ga^\be u\cdot\nab\d_t\Ga^\ga\phi\|_{L^2}+\sum_{\be+\ga=\al} \<t\>\|\d_t\Ga^\be u\cdot\nab\Ga^\ga\phi\|_{L^2}\\
		&\quad +\sum_{\be+\ga+\de=\al} \<t\>\|\Ga^\be u\cdot\nab(\Ga^\ga u\cdot\nab \Ga^\de \phi)\|_{L^2}\\
		&=\BB_1+\BB_2+\BB_3\,.
	\end{align*}
    
    \bigskip
    
    \emph{i) Estimates of $\BB_1$.} 
    
    \medskip 
    When $r<\frac{2}{3}\<t\>$, by \eqref{XX} and \eqref{AwayCone} we have
    \begin{align*}
    	&\sum_{|\be+\ga|\leq N-1} \<t\>\|\Ga^\be u\cdot\nab\d_t\Ga^\ga\phi\|_{L^2(r<\frac{2}{3}\<t\>)}\\
    	&\lesssim \|\Ga^{ [(N-1)/2]}u\|_{L^\infty}\|\<t-r\>\nab\d_t\Ga^{N-1} \phi\|_{L^2(r< 2\<t\>/3)}+\|\Ga^{N-1}u\|_{L^2}\|t\nab\d_t\Ga^{[(N-1)/2]}\phi\|_{L^\infty(r< 2\<t\>/3)}\\
    	&\lesssim E^{1/2}_{[(N-1)/2]+2}\XX^{1/2}_{N}+E^{1/2}_{N-1}\XX^{1/2}_{[(N-1)/2]+3}\,.
    \end{align*}
    When $r\geq \frac{2}{3}\<t\>$, we split the $B_1$ into
    \begin{align*}
    	&\sum_{|\be+\ga|\leq N-1} \<t\>\|\Ga^\be u\cdot\nab\d_t\Ga^\ga\phi\|_{L^2(r\geq \frac{2}{3}\<t\>)}\\
    	&=\sum_{|\be+\ga|\leq N-1}\<t\>\|\Ga^\be u\cdot \om\d_r\d_t\Ga^\ga\phi\|_{L^2(r\geq 2\<t\>/3)}+\sum_{|\be+\ga|\leq N-1}\<t\>\|\Ga^\be u\cdot \frac{\om^{\bot}}{r}\Om\d_t\Ga^\ga\phi\|_{L^2(r\geq 2\<t\>/3)}\\
    	:&=\BB_{11}+\BB_{12}\,.
    \end{align*}
For the fist term $\BB_{11}$, we use \eqref{omf}, \eqref{AwayCone} and \eqref{NearCone} to gain
\begin{align*}
	\BB_{11}&\lesssim \|r\om\cdot\Ga^{[(N-1)/2]} u\|_{L^\infty}\| \d_r\d_t\Ga^{N-1}\phi\|_{L^2}+\|\Ga^{N-1} u\|_{L^2}\|t\d_r\d_t\Ga^{[(N-1)/2]}\phi\|_{L^\infty(r\geq 5\<t\>/4)}\\
	&\quad+\<t\>^{1/2}\Big\|\frac{\om\cdot\Ga^{N-1} u+\nab\cdot \Ga^{N-1} u}{\<t-r\>}\Big\|_{L^2}\|r^{1/2}\<t-r\>\d_r\d_t\Ga^{[(N-1)/2]}\phi\|_{L^\infty(2\<t\>/3\leq r<5\<t\>/4)}\\
	&\lesssim E^{1/2}_{[(N-1)/2]+2}E^{1/2}_{N}+E^{1/2}_{N-1}\XX^{1/2}_{[(N-1)/2]+3}+\<t\>^{1/2}\DD_{N-1}(E^{1/2}_{[(N-1)/2]+2}+\XX^{1/2}_{[(N-1)/2]+3})\,.
\end{align*}
    By Sobolev embedding, the second term $\BB_{12}$ is controlled by
    \begin{align*}
    	\BB_{12}\lesssim  \|\Ga^{N-1}u\|_{L^4}\|  \d_t\Om\Ga^{[(N-1)/2]}\phi\|_{L^4}+ \|\Ga^{[(N-1)/2]}u\|_{\Lf}\|  \d_t\Om\Ga^{N-1}\phi\|_{L^2}
    	\lesssim E^{1/2}_{N}E^{1/2}_{[(N-1)/2]+2}\,.
    \end{align*}

Thus, collecting the above bounds, by $N\geq 10$ and \eqref{XX_ka-2} we have
\begin{align*}
	\BB_1&\lesssim E^{1/2}_{N-2}\XX^{1/2}_{N}+E^{1/2}_{N-1}\XX^{1/2}_{N-2}+ E^{1/2}_{N-2}E^{1/2}_{N}+\<t\>^{1/2}\DD_{N-1}(E^{1/2}_{N-2}+\XX^{1/2}_{N-2})\\ 
	&\les \ep \XX^{1/2}_N+\ep^2+\ep \<t\>^{1/2}\DD_{N-1}\,.
\end{align*}

\bigskip 

    \emph{ii) Estimates of $\BB_2$.}
     
     \medskip  
    When $\ga=\al$, by \eqref{decay-dtu} we have
    \begin{align*}
    \BB_2\mathbf{1}_{\ga=\al}\lesssim \<t\>\|\d_t u\|_{\Lf}\|\nab\Ga^{N-1}\phi\|_{L^2}\lesssim \ep E^{1/2}_{N-1}\les \ep^2\,.
    \end{align*}
    Thus it remains to consider the cases $|\ga|\leq |\al|-1\leq N-2$.
    
    In the region $r\leq 2\<t\>/3$ or $r\geq 5\<t\>/4$, by \eqref{dtu-L2al}, \eqref{dtu-L2} and \eqref{AwayCone0} we have
    \begin{align*}
    	&\sum_{|\be+\ga|\leq N-1;|\ga|\leq N-2}\|\d_t\Ga^\be u\cdot t\nab\Ga^\ga\phi\|_{L^2(r\notin[\frac{2}{3}\<t\>,\frac{5}{4}\<t\>])}\\
    	&\les \|\d_t \Ga^{N-1}u\|_{L^2}\|t\nab\Ga^{[(N-1)/2]}\phi\|_{\Lf(r\notin[\frac{2}{3}\<t\>,\frac{5}{4}\<t\>])}
    	+ \|\d_t \Ga^{[(N-1)/2]}u\|_{L^2}\|t\nab\Ga^{N-2}\phi\|_{\Lf(r\notin[\frac{2}{3}\<t\>,\frac{5}{4}\<t\>])}\\
    	&\les \big(\|\nab\Ga^{N}u\|_{L^2}(1+E^{1/2}_{N})+ \<t\>^{-1/2}(E^{1/2}_{N}+\XX^{1/2}_{N-2})\big) \ln^{1/2}(e+t)(E^{1/2}_{[(N-1)/2]}+\XX^{1/2}_{[(N-1)/2]+2})\\
    	&\quad +\<t\>^{-1/2}\ln^{[(N-1)/2]+2}(e+t)(E^{1/2}_{[(N-1)/2]+3}+\XX^{1/2}_{[(N-1)/2]+4})\ln^{1/2}(e+t)(E^{1/2}_{N-2}+\XX^{1/2}_{N})
    \end{align*}
This is bounded by
\begin{align*}
	&(\|\nab\Ga^N u\|_{L^2} +\ep\<t\>^{-1/2})\ep\<t\>^\de+\ep \<t\>^{-1/2+\de}(\ep+\XX^{1/2}_N) \\
	&\les \ep \<t\>^\de \|\nab\Ga^N u\|_{L^2} +\ep^2+\ep \XX^{1/2}_N\,.
\end{align*}

    In the region $ 2\<t\>/3<r<5\<t\>/4$, when $|\ga|\leq [(N-1)/2]$ by \eqref{dtu-L2al} and \eqref{NearCone0} we have
    \begin{align*}
    	&\|t\d_t\Ga^{N-1} u\cdot\nab\Ga^{[(N-1)/2]}\phi\|_{L^2(r\in[\frac{2}{3}\<t\>,\frac{5}{4}\<t\>])}\\
    	&\lesssim t^{1/2}\|\d_t\Ga^{N-1}u\|_{L^2}\|r^{1/2}\nab\Ga^{[(N-1)/2]}\phi\|_{\Lf(r\in[\frac{2}{3}\<t\>,\frac{5}{4}\<t\>])}\\
    	&\lesssim \<t\>^{1/2}\big(\|\nab\Ga^{N}u\|_{L^2}(1+E^{1/2}_{N})+ \<t\>^{-1/2}(E^{1/2}_{N}+\XX^{1/2}_{[(N-1)/2]+3})\big)(E^{1/2}_{[(N-1)/2]+1}+\XX^{1/2}_{[(N-1)/2]+2})\\
    	&\lesssim \ep \<t\>^{1/2}\|\nab \Ga^{ N}u\|_{L^2}+\ep^2\,.
    \end{align*}
    When $[(N-1)/2]<|\ga|\leq N-2$, by \eqref{omf} and \eqref{phi-lf} we have
    \begin{align*}
    	&\|t\d_t\Ga^{[(N-1)/2]} u\cdot \nab\Ga^{N-2}\phi\|_{L^2(r\in[\frac{2}{3}\<t\>,\frac{5}{4}\<t\>])}\\
    	&\lesssim \big\|r(\d_t\Ga^{[(N-1)/2]} u\cdot \om\d_r\Ga^{N-2}\phi+\d_t\Ga^{[(N-1)/2]} u\cdot \frac{\om^{\bot}}{r}\Om\Ga^{N-2}\phi)\big\|_{L^2(r\in[\frac{2}{3}\<t\>,\frac{5}{4}\<t\>])}\\
    	&\lesssim \|r\d_t\Ga^{[(N-1)/2]} u\cdot\om\|_{\Lf}\|\d_r\Ga^{N-2}\phi\|_{L^2}+\|\d_t\Ga^{[(N-1)/2]} u\|_{L^2}\|\Om\Ga^{N-2}\phi\|_{\Lf}\\
    	&\lesssim E^{1/2}_{[(N-1)/2]+3}E^{1/2}_{N-2}+E^{1/2}_{[(N-1)/2]+1}E^{1/2}_{N}\\
    	&\les \ep^2\,.
    \end{align*}
    
Hence, we obtain     
    \begin{align*}
    	\BB_2\lesssim \ep \<t\>^{1/2} \|\nab\Ga^N\|_{L^2} +\ep^2+\ep \XX^{1/2}_N\,.
    \end{align*}

\bigskip 

\emph{iii) Estimates of $\BB_3$.}

\medskip 
The term $\BB_3$ is rewritten as 
\begin{align*}
	\BB_3=\sum_{\be+\ga+\de=\al} \<t\>\|\Ga^\be u\cdot\Ga^\ga u\cdot\nab\nab \Ga^\de \phi\|_{L^2}+\sum_{\be+\ga+\de=\al} \<t\>\|\Ga^\be u\cdot\nab\Ga^\ga u\cdot\nab \Ga^\de \phi\|_{L^2}=:\BB_{31}+\BB_{32}\,.
\end{align*}

In the region $r\geq 2\<t\>/3$, by \eqref{rv} we have
\begin{align*}
	&\sum_{|\be+\ga+\de|\leq N-1} \<t\>\|\Ga^\be u\cdot\Ga^\ga u\cdot\nab\nab \Ga^\de \phi\|_{L^2(r\geq 2\<t\>/3)}\\
	&\les \|r^{1/2}\Ga^{[(N-1)/2]}u\|_{\Lf}^2E^{1/2}_N+E^{1/2}_{N-1}\|r^{1/2}\Ga^{[(N-1)/2]}u\|_{\Lf}\|r^{1/2}\nab^2 \Ga^{[(N-1)/2]}\phi\|_{\Lf}\\
	&\les E_{[(N-1)/2]+2}E^{1/2}_N+E^{1/2}_{N-1}E^{1/2}_{[(N-1)/2]+2}E^{1/2}_{[(N-1)/2]+3}\\
	&\les E_{N-2}E^{1/2}_N\,.
\end{align*}
In the same way, we also have
\begin{align*}
	\sum_{|\be+\ga+\de|\leq N-1} \<t\>\|\Ga^\be u\cdot\nab\Ga^\ga u\cdot\nab \Ga^\de \phi\|_{L^2(r\geq 2\<t\>/3)}
	\les E_{N-2}E^{1/2}_N\,.
\end{align*}

In the region $r<2\<t\>/3$, by Sobolev embeddings,  \eqref{AwayCone} and \eqref{XX_ka-2}, the first term can be bounded by 
\begin{align*}
	&\sum_{|\be+\ga+\de|\leq N-1} \<t\>\|\Ga^\be u\cdot\Ga^\ga u\cdot\nab\nab \Ga^\de \phi\|_{L^2(r<2\<t\>/3)}\\
	&\les \sum_{|\be+\ga+\de|\leq N-1} \|\Ga^\be u\cdot\Ga^\ga u\cdot \<t-r\>\nab\nab \Ga^\de \phi\|_{L^2(r<2\<t\>/3)}\\
	&\les \|\Ga^{[(N-1)/2]}u\|_{\Lf}^2\|\<t-r\>\nab\nab \Ga^{N-1} \phi\|_{L^2(r<2\<t\>/3)}\\
	&\quad+\|\Ga^{N-1}u\|_{L^2}\|\Ga^{[(N-1)/2]}u\|_{\Lf}\|\<t\>\nab\nab \Ga^{[(N-1)/2]} \phi\|_{\Lf(r<2\<t\>/3)}\\
	&\les E_{N-2}\XX^{1/2}_N+E^{1/2}_{N-1}E^{1/2}_{N-2}\XX^{1/2}_{[(N-1)/2]+3}\\
	&\les \ep^2 \XX^{1/2}_N+\ep^3\,.
\end{align*}
For the second term $\BB_{32}$, when $|\nu|\leq N-2$, by \eqref{Decay-u}, \eqref{AwayCone0} and \eqref{XX_ka-2} we have
\begin{align*}
	&\sum_{|\be+\ga+\nu|\leq N-1;|\nu|\leq N-2} \<t\>\|\Ga^\be u\cdot\nab\Ga^\ga u\cdot\nab \Ga^\nu \phi\|_{L^2(r<2\<t\>/3)}\\
	&\les \|\Ga^{N-1}u\|_{L^2}\|\Ga^{[(N-1)/2]}u\|_{\Lf}\|\<t\>\nab\Ga^{N-2}\phi\|_{\Lf(r<2\<t\>/3)}\\
	&\les E^{1/2}_{N-1}\<t\>^{-1/2+\de}(E^{1/2}_{[(N-1)/2]+2}+\XX^{1/2}_{[(N-1)/2]+3})(E^{1/2}_{N-2}+\XX^{1/2}_{N})\ln^{1/2}(e+t)\\
	&\les \ep^2 (\ep+\XX^{1/2}_N)\,.
\end{align*}
When $|\nu|=N-1$, by \eqref{Decay-u} and \eqref{decay-du0} we have
\begin{align*}
	& \<t\>\| u\cdot\nab u\cdot\nab \Ga^{N-1} \phi\|_{L^2(r<2\<t\>/3)}\\
	&\les \<t\>\|u\|_{\Lf}\|\nab u\|_{\Lf}E^{1/2}_{N-1}\\
	&\les \<t\>^{1-1/2+\de-1+\de}(E^{1/2}_3+\XX^{1/2}_3)(E^{1/2}_{3}+\XX_{3})E^{1/2}_{N-1} \\
	&\les \ep^3\,.
\end{align*}

Hence, the term $\BB_3$ is bounded by 
\begin{align*}
	\BB_3\les \ep^3+\ep^2 \XX^{1/2}_N\,.
\end{align*}

Collecting the above bounds, we obtain the estimate \eqref{Cont-phi-ka}.
\end{proof}

\bigskip

\section{Energy estimates}\label{sec-Energy}
This section is devoted to the energy estimates of system \eqref{Main_Sys_VecFie}. More precisely, we aim to establish uniform control over the modified energy of the solutions $(u,\phi)$, which is equivalent with the Klainerman's generalized energy.
The keys to this are to characterize these norms using energy functionals constructed with ghost weight, and to introduce a new ``good unknown", the velocity $u$.

Let $N\geq 10$ and $|\al|\leq N$. Define the ghost weight as
\begin{equation*}
	q(\si)=\arctan \si\,,\quad \si=t-r\,.
\end{equation*}
We will use the notation
\[   \<\si\>=\sqrt{1+\si^2}\,.   \]
Then we define the modified energies of system \eqref{Main_Sys_VecFie} as 
\begin{align*}
	\EE_\al(t)&=\EE_\al(u,t)+\EE_\al(\phi,t)
	:=\int_{\R^2} \frac{1}{2} e^{-q} |\Ga^\al u|^2\ dx+\int_{\R^2} \frac{1}{2} e^{-q} |\nab_{t,x}\Ga^\al \phi|^2\ dx\,.
\end{align*}
In order for the energy estimates of $\phi$, here we also need the following auxiliary norms
\begin{align*}
	\tilde\EE_\al(\phi,t)=\int_{\R^2} e^{-q}\d_t\Ga^\al\phi\Ga^\al u\cdot\nab\phi\ dx+\frac{1}{2}\int_{\R^2} e^{-q} |\Ga^\al u\cdot \nab\phi|^2 \ dx+\frac{1}{2}\int_{\R^2} e^{-q}|u\cdot\nab\Ga^\al\phi|^2\ dx\,.
\end{align*}
In view of the viscosity in the equation of $u$ and the above modified energy, we define the modified functional with respect to $\DD_N$ as
\begin{align}      \label{Dal}
	\mathscr D_\al^2(u,t)&=\int_{\R^2} \frac{1}{2}e^{-q}|\nab\Ga^\al u|^2\ dx+\int_{\R^2} \frac{1}{2}e^{-q}\frac{|\Ga^\al u|^2}{1+\si^2}\ dx\,,\\\nonumber
	\mathscr D_\al^2(\phi,t)&=\int_{\R^2} \frac{1}{2}e^{-q}\Big(\frac{|\om\d_t\Ga^\al \phi+\nab\Ga^\al\phi|^2}{1+\si^2}\Big)\ dx\,.
\end{align}

We will use the following abbreviations: for any integer $m\leq N$
\begin{align*} 
\EE_m&=\EE_m(u,t)+\EE_m(\phi,t):=\sum_{|\al|\leq m} \EE_{\al}(u,t)+\sum_{|\al|\leq m} \EE_{\al}(\phi,t)\,, \\
\mathscr D_m^2&=\mathscr D_m^2(u,t)+\mathscr D_m^2(\phi,t):=\sum_{|\al|\leq m}\mathscr D_\al^2(u,t)  +\sum_{|\al|\leq m}\mathscr D_\al^2(\phi,t) \,.      
\end{align*}

Then the energy estimates are as follows.
\begin{proposition}     \label{Prop_Eesti}
	Let $N\geq 10$ and $|\al|\leq N$. Assume that $(u,\phi)$ is the solution of \eqref{Main_Sys} satisfying \eqref{MainAss_dini} and \eqref{Main_Prop_Ass1}. Then for any $t\in[0,T]$, we have
	
	i) the equivalence relation:
	\begin{align}   \label{eq}
		\EE_\al(t)+\tilde{\EE}_\al(\phi,t)+\int_0^t \int_{\R^2} e^{-q} |\nab\Ga^\al u|^2\ dxds\approx E_\al(t)\,,\\ \label{eq-D}
        \mathscr D_m(t)\approx \DD_m(t).
	\end{align}
    
    ii) energy estimates:
	\begin{align}       \label{E-u}
		\frac{d}{dt}\EE_\al(u,t)+\frac{1}{8}\mathscr D_\al^2(u,t)\les \mathscr D_{|\al|-1}^2(u,t)+ \ep^3 \<t\>^{-1},\\ \label{E-phi}
		\frac{d}{dt}\big(\EE_\al(\phi,t)+\tilde \EE_\al(\phi,t)\big)+\mathscr D^2_\al(\phi,t)\les \ep \mathscr D_{N}^2(t)+ \ep^3\<t\>^{-1}\,.
	\end{align}
\end{proposition}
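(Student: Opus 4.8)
The proof splits naturally into the three assertions in Proposition \ref{Prop_Eesti}: the equivalences \eqref{eq}--\eqref{eq-D}, the energy estimate \eqref{E-u} for $u$, and the energy estimate \eqref{E-phi} for $\phi$. For the equivalences, the plan is to observe that $q=\arctan\si$ is bounded, so $e^{-q}\approx 1$ uniformly in $t,x$; hence $\EE_\al(u,t)+\int_0^t\|\nab\Ga^\al u\|_{L^2}^2$ is comparable to $\|\Ga^\al u\|_{L^2}^2+\int_0^t\|\nab\Ga^\al u\|_{L^2}^2$ up to a universal constant, and similarly $\EE_\al(\phi,t)\approx\|\nab_{t,x}\Ga^\al\phi\|_{L^2}^2$. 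For \eqref{eq} one must also absorb the auxiliary term $\tilde\EE_\al(\phi,t)$: the last two summands there are nonnegative, and the first, $\int e^{-q}\d_t\Ga^\al\phi\,\Ga^\al u\cdot\nab\phi\,dx$, is controlled by Cauchy--Schwarz together with the smallness $\|u\cdot\nab\phi\|_{L^\infty}\lesssim\<t\>^{-1}(E^{1/2}+\XX^{1/2})$ from the decay estimates of Section \ref{sec-decay}, so it is dominated by $\ep$ times $\EE_\al(\phi,t)$ and can be absorbed. For \eqref{eq-D} one simply notes $\frac{1}{1+\si^2}\approx\frac{1}{\<t-r\>^2}$ and $e^{-q}\approx1$, so $\mathscr D_m^2$ matches the definition of $\DD_m^2$ term by term.

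\textbf{Energy estimate for $u$.} Here the plan is to differentiate $\EE_\al(u,t)=\frac12\int e^{-q}|\Ga^\al u|^2$ in time and use the $\Ga^\al u$-equation from \eqref{Main_Sys_VecFie}: $\d_t\Ga^\al u-\De(S-1)^{\al_1}Z^{\al'}u+\nab\Ga^\al p=f_\al$. Two mechanisms generate the dissipation $\mathscr D_\al^2(u,t)$. First, $\frac{d}{dt}e^{-q}=-q'(\si)e^{-q}=-\frac{e^{-q}}{1+\si^2}$ (since $\d_t\si=1$), which produces exactly the ghost-weight term $\int e^{-q}\frac{|\Ga^\al u|^2}{1+\si^2}$ with a favorable sign --- this is the Alinhac mechanism. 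Second, integrating the Laplacian term by parts against $e^{-q}\Ga^\al u$ yields the parabolic dissipation $\int e^{-q}|\nab\Ga^\al u|^2$, plus commutator terms where $\nab$ hits $e^{-q}$ (giving a term $\lesssim\int\frac{e^{-q}}{\<t-r\>}|\nab\Ga^\al u||\Ga^\al u|$, absorbed by Young's inequality into $\frac14\mathscr D_\al^2(u)$) and lower-order terms from the mismatch between $\De(S-1)^{\al_1}Z^{\al'}u$ and $\De\Ga^\al u$ (these are $\lesssim\mathscr D_{|\al|-1}^2(u,t)$, accounting for the first term on the right of \eqref{E-u}). The pressure term $\int e^{-q}\Ga^\al u\cdot\nab\Ga^\al p$ is handled by $\div\Ga^\al u=0$, moving $\nab$ onto $e^{-q}$ so it becomes another term of size $\lesssim\int\frac{e^{-q}}{\<t-r\>}|\Ga^\al u||\Ga^\al p|$; pressure is estimated via $\nab p=-\P$ applied to the quadratic nonlinearities, and the resulting contribution is cubic, hence $\lesssim\ep^3\<t\>^{-1}$ using the decay estimates. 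The nonlinearity $\int e^{-q}\Ga^\al u\cdot f_\al$ with $f_\al$ given by \eqref{fal} is bilinear in $(u,\phi)$; splitting $\be+\ga=\al$ with the lower-index factor placed in $L^\infty$ and using the decay bounds \eqref{Decay-u}, \eqref{AwayCone0}, \eqref{NearCone0} (plus Hardy-type control near the cone) gives cubic bounds $\lesssim\ep^3\<t\>^{-1}$, possibly after absorbing a small multiple of $\mathscr D_\al^2(u)$.

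\textbf{Energy estimate for $\phi$.} This is the delicate part and the main obstacle. Differentiating $\EE_\al(\phi,t)=\frac12\int e^{-q}|\nab_{t,x}\Ga^\al\phi|^2$ and using $\d_t^2\Ga^\al\phi-\De\Ga^\al\phi=g_\al$ from \eqref{Main_Sys_VecFie}, the $\frac{d}{dt}e^{-q}$ mechanism produces the ghost term $\int\frac{e^{-q}}{1+\si^2}|\om\d_t\Ga^\al\phi+\nab\Ga^\al\phi|^2=2\mathscr D_\al^2(\phi,t)$ with good sign, exploiting that the spatial integration-by-parts of $\De\Ga^\al\phi$ against $e^{-q}\d_t\Ga^\al\phi$ creates the cross term $\int q'(\si)e^{-q}\om\cdot\nab\Ga^\al\phi\,\d_t\Ga^\al\phi$ which combines with the time-derivative-of-weight term into the perfect square $|\om\d_t\Ga^\al\phi+\nab\Ga^\al\phi|^2$. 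The real work is the source $\int e^{-q}\d_t\Ga^\al\phi\,g_\al$, where by \eqref{gal} the worst contribution is the second-order material derivative piece $\Ga^\be u\cdot\nab\d_t\Ga^\ga\phi$. This term alone is only borderline: following the discussion in the introduction, I would decompose $\nab=\om\d_r+\frac{\om^\perp}{r}\d_\th$, use $u\cdot\om\lesssim\ep\<t\>^{-1}$ from \eqref{omf} for the radial part, and for the angular part note that $\d_\th=\Om$ is a good vector field so the factor gains a unit of $\<t-r\>^{-1}$ near the cone, letting us pair against the ghost-weight dissipation $\mathscr D_N(t)$ --- this is precisely why $\tilde\EE_\al(\phi,t)$ is introduced: its time derivative produces correction terms (from $\frac{d}{dt}\int e^{-q}\d_t\Ga^\al\phi\,\Ga^\al u\cdot\nab\phi$, using both the $\phi$- and $u$-equations) that cancel the part of $\int e^{-q}\d_t\Ga^\al\phi\,g_\al$ one cannot otherwise close, leaving a remainder bounded by $\ep\mathscr D_N^2(t)+\ep^3\<t\>^{-1}$. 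The terms $\d_t\Ga^\be u\cdot\nab\Ga^\ga\phi$ and the cubic $\Ga^\be u\cdot\nab(\Ga^\ga u\cdot\nab\Ga^\de\phi)$ are less singular and handled directly by the decay estimates of Section \ref{sec-decay} together with \eqref{eq-D}; when top-order derivatives land on $u$ one uses $\|\nab\Ga^N u\|_{L^2}\le\mathscr D_N(t)$ so that $\<t\>^{1/2}\|\nab\Ga^N u\|_{L^2}$ appears exactly with the $\XX_N$-estimate \eqref{XX_ka}, keeping everything inside $\ep\mathscr D_N^2(t)$. The key obstacle throughout is bookkeeping the interplay between which factor carries many derivatives (must go in $L^2$, controlled by energy) and which carries few (goes in $L^\infty$, controlled by the $\<t\>^{-1/2}$ or near-cone $\<t-r\>^{-1/2}$ decays), and verifying that the genuinely quasilinear term never costs more than one derivative thanks to the good-unknown corrections.
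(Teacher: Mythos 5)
Your overall framework matches the paper: differentiate the ghost-weighted energies, use $\tfrac{d}{dt}e^{-q}=-\tfrac{e^{-q}}{1+\si^2}$ to generate the good dissipative terms, integrate the Laplacian by parts, and handle the source cubically via the decay estimates while absorbing small multiples of $\mathscr D_N^2$. The equivalences are argued the same way (boundedness of $q$ plus smallness of the auxiliary cross term), and for the $u$-equation your completion by Young's inequality instead of the paper's explicit perfect square $|\Ga^\al u+\d_r\Ga^\al u|^2/(1+\si^2)$, and your treatment of the pressure by integrating by parts onto $e^{-q}$ instead of invoking the Leray projector, are both fine alternative bookkeeping choices.

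However, there is a genuine misidentification in the $\phi$-part. You attribute the introduction of $\tilde\EE_\al(\phi,t)$ to the ``worst'' acoustical-metric term $\Ga^\be u\cdot\nab\d_t\Ga^\ga\phi$ (the $J_1$ term), and then claim the other two pieces of $g_\al$ --- $\d_t\Ga^\be u\cdot\nab\Ga^\ga\phi$ and $\Ga^\be u\cdot\nab(\Ga^\ga u\cdot\nab\Ga^\de\phi)$ --- ``are less singular and handled directly by the decay estimates.'' This is exactly backwards. The $J_1$ term is closed \emph{without} any energy correction: at $\be=0$ one integrates by parts using $\div u=0$ to symmetrize and picks up $\om\cdot u/(1+\si^2)$; for $\be\ne 0$ one uses the radial/angular decomposition, \eqref{omf} and the weighted $L^2$ norms. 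The correction terms in $\tilde\EE_\al$ are needed precisely for the \emph{other two} pieces, at top index: $\be=\al$ in $J_2$ gives $\d_t\Ga^\al u\cdot\nab\phi$, and $\|\d_t\Ga^\al u\|_{L^2}$ costs two extra spatial derivatives of $u$ (cf.\ \eqref{dtu-L2al}, which needs $\nab\Ga^{|\al|+1}u$), which is not in $E_N$ or $\mathscr D_N$; similarly $\mu=\al$ in $J_3$ gives $u\cdot\nab(u\cdot\nab\Ga^\al\phi)$ and would cost an extra derivative on $\phi$. In both cases your claim $\|\nab\Ga^N u\|_{L^2}\le\mathscr D_N$ is not sufficient, and ``handled directly by decay'' would leave a loss of one derivative. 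The role of $\tilde\EE_\al$ is to move those exact top-order terms to the left-hand side as time derivatives (the first and third summands of $\tilde\EE_\al$), with the second summand $\tfrac12\int e^{-q}|\Ga^\al u\cdot\nab\phi|^2$ generated as a secondary correction when the first is differentiated. You should rewire this part of your argument: the decomposition $\nab=\om\d_r+\om^\perp r^{-1}\d_\th$ plus the $\om\cdot u$ gain is the mechanism for $J_1$, while the $\tilde\EE_\al$ corrections are the good-unknown device that removes the derivative loss in $J_2$ and $J_3$.
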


From the definitions of energy and modified energy, we can prove the equivalence first.
\begin{proof}[Proof of equivalence \eqref{eq} and \eqref{eq-D}]
	\ 
	
	Since $-\pi/2\leq q(\si)\leq \pi/2$ and 
	\[  \|u\|_{\Lf}+\|\nab\phi\|_{\Lf}\les \ep\,,   \]
	then we have
	\begin{align*}
		\EE_\al(t)+\int_0^t \int_{\R^2} e^{-q} |\nab\Ga^\al u|^2\ dxds+\tilde{\EE}_{\al}(\phi,t)\leq e^{\pi/2}E_\al(t)+(\ep e^{\pi/2} +2\ep^2 e^{\pi/2})E_\al(t)\,,
	\end{align*}
and 
\begin{align*}
	\EE_\al(t)+\int_0^t \int_{\R^2} e^{-q} |\nab\Ga^\al u|^2\ dxds+\tilde{\EE}_{\al}(\phi,t)\geq \frac{1}{2} e^{-\pi/2}E_\al(t)-(\ep e^{\pi/2} +2\ep^2 e^{\pi/2})E_\al(t)\,,
\end{align*}
Then the equivalence relation \eqref{eq} follows. Similarly, since $e^{-q(\si)}\approx 1$, we can also obtain the equivalence \eqref{eq-D}.
\end{proof}

The energy estimates will be provided in the following subsections.

\subsection{The energy estimates for velocity \texorpdfstring{$\Ga^\al u$}{}}

\begin{proof}[Proof of energy estimates \eqref{E-u}]
Taking the $L^2$ inner product of \eqref{Main_Sys_VecFie} with $e^{-\rho}\Ga^\al u$ and using integration by parts, we have
\begin{align*}
	\frac{d}{dt}\int_{\R^2} \frac{1}{2} e^{-q} |\Ga^\al u|^2\ dx
	&=\int_{\R^2} e^{-q}\Ga^\al u\ \d_t\Ga^\al u-\frac{e^{-q}}{2(1+\si^2)}|\Ga^\al u|^2\ dx\\
	&=\int_{\R^2} -\frac{1}{2}e^{-q}\Big(2|\nab\Ga^\al u|^2+\frac{2\om_j}{1+\si^2}\ \Ga^\al u\ \d_j\Ga^\al u+\frac{1}{1+\si^2}|\Ga^\al u|^2\Big)+e^{-q}\Ga^\al u \ \tilde f_\al\ dx\\
	&=\int_{\R^2} -\frac{1}{2}e^{-q}\Big((2|\nab\Ga^\al u|^2-\frac{|\d_r\Ga^\al u|^2}{1+\si^2})+\frac{|\Ga^\al u+\d_r\Ga^\al u|^2}{1+\si^2}\Big)+e^{-q}\Ga^\al u\ \tilde f_\al\ dx\,.
\end{align*}
where $f_\al$ is in \eqref{fal} and 
\[ \tilde f_\al= \De (  (S-1)^{\al_1}\Ga^{\al'}u-\Ga^\al u)+\P f_\al\,. \]
Since $2-(1+\si^2)^{-1}\geq 1$, this implies
\begin{align*}
	\frac{d}{dt}\int_{\R^2} \frac{1}{2} e^{-q} |\Ga^\al u|^2\ dx+\int_{\R^2} \frac{1}{2}e^{-q}\Big(|\nab\Ga^\al u|^2+\frac{|\Ga^\al u+\d_r\Ga^\al u|^2}{1+\si^2}\Big)\ dx\leq \int_{\R^2} e^{-q}\Ga^\al u\cdot \tilde f_\al\ dx\,.
\end{align*}
By the definition of $\DD_\al^2$ \eqref{Dal} and the relation
\begin{align*}
	\frac{1}{4}\Big(\frac{|\Ga^\al u|^2}{1+\si^2}+|\nab\Ga^\al u|^2\Big)\leq \frac{1}{2}\frac{|\Ga^\al u+\d_r \Ga^\al u|^2}{1+\si^2}+|\nab\Ga^\al u|^2\,,
\end{align*}
we further have
\begin{align*}
	\frac{d}{dt}\int_{\R^2} \frac{1}{2} e^{-q} |\Ga^\al u|^2\ dx+\frac{1}{4}\mathscr D_\al^2(u,t)\leq \int_{\R^2} e^{-q}\Ga^\al u\cdot \tilde f_\al\ dx\,.
\end{align*}

In view of the formula \eqref{fal} for $f_\al$, the right-hand side of the above can be written as 
\begin{align*}
	\int_{\R^2} e^{-q}\Ga^\al u\cdot \tilde f_\al\ dx
	&=\int_{\R^2}e^{-q} \Ga^\al u\cdot \De (  (S-1)^{\al_1}\Ga^{\al'}u-\Ga^\al u)\ dx\\
	&\quad  -\sum_{\be+\ga=\al}C_{\al}^\be\int_{\R^2}e^{-q} \Ga^\al u\cdot \P(\Ga^\be u\cdot\nab \Ga^\ga u)\ dx\\
	&\quad  -\sum_{\be+\ga=\al}C_\al^\be\int_{\R^2}e^{-q} \Ga^\al u\cdot\P\d_j(\nab\Gamma^{\be}\phi\cdot\d_j \Gamma^{\ga}\phi)\ dx\\
	&=I_1+I_2+I_3\,.
\end{align*}
Next, we estimate these terms respectively.
	
For the first integral $I_1$, by integration by parts and H\"{o}lder, we have
\begin{equation*}       \label{Ev_I1}
	\begin{aligned}
		I_1(s) &\lesssim  \sum_{|\be|<|\al|}\Big|\int_{\R^2} e^{-q}(  \nab \Ga^\al u \nab\Ga^\be u+\om_j(1+\si^2)^{-1}\Ga^\al u \d_j\Ga^\be u)dx\Big|\\
		&=\sum_{|\be|<|\al|}\Big|\int_{\R^2} e^{-q}\Big(|\nab\Ga^\al u||\nab\Ga^\be u|+ \frac{|\Ga^\al u|}{1+\si^2} |\nab\Ga^\be u| \Big)dx\Big|\\
		&\leq \sum_{|\be|<|\al|} \frac{1}{16} \int_{\R^2} e^{-q} \Big(\frac{|\Ga^\al u|^2}{1+\si^2}+|\nab\Ga^\al u|^2+C|\nab\Ga^\be u|^2\Big)\ dx\\
		&\leq  \frac{1}{8}\mathscr D^2_\al(u)+C\mathscr D^2_{|\al|-1}(u)\,.
	\end{aligned}
\end{equation*}
For the second term $I_2$, by integration by parts and Sobolev embedding we have
\begin{align*}
	I_2&\les \sum_{\be+\ga=\al} \|\nab(e^{-q}\Ga^\al u)\|_{L^2} \|\Ga^\be u\|_{L^4} \|\Ga^\ga u\|_{L^4} \\
	&\les \|\nab(e^{-q}\Ga^\al u )\|_{L^2}\|\Ga^\be u\|_{L^2}^{1/2}\|\nab\Ga^\be u\|_{L^2}^{1/2}\|\Ga^\ga u\|_{L^2}^{1/2}\|\nab \Ga^\ga u\|_{L^2}^{1/2}\\
	&\les \big(\big\|\frac{\Ga^\al u}{\<\si\>}\big\|_{L^2}+\|\nab \Ga^\al u\|_{L^2}\big)\big( \ep \mathscr D_\al^{1/2}(u) \mathscr D^{1/2}_{0}(u)+\ep \mathscr D_{|\al|-1}(u)\big)\,,
\end{align*}
which yields
\begin{align*}
	I_2(s) &\les \mathscr D_\al(u) \big(\ep \mathscr D_\al(u)+\ep \mathscr D_{|\al|-1}(u)\big)\les \ep \mathscr D_\al^2(u)+\ep \mathscr D_{|\al|-1}^2(u)\,.
\end{align*}
Finally, we bound the term $I_3$. By integration by parts,  $I_3$ can be written as 
\begin{align*}
	I_3&=-\sum_{\be+\ga=\al}C_\al^\be\int_{\R^2}e^{-q} \Ga^\al u\cdot \P\d_j(\nab \Gamma^{\be}\phi\cdot\d_j\Gamma^{\ga}\phi)dx\\
	&=\sum_{\be+\ga=\al}C_\al^\be\int_{\R^2}e^{-q} \Big( \frac{\om_j}{\<\si\>^2}\Ga^\al u+ \d_j\Ga^\al u\Big)\cdot \P(\nab \Gamma^{\be}\phi\cdot\d_j\Gamma^{\ga}\phi)\  dx
\end{align*} 
By \eqref{AwayCone0} and \eqref{NearCone0}, the integrand can be bounded by 
\begin{align*}
	&\|\P(\nab \Gamma^{\be}\phi\cdot\d_j\Gamma^{\ga}\phi)\|_{L^2}
	\les E^{1/2}_N \|\nab \Ga^{[N/2]}\phi\|_{\Lf}\les \<t\>^{-1/2}E^{1/2}_N (E^{1/2}_{[N/2]+1}+\XX^{1/2}_{[N/2]+2})\,.
\end{align*}
Then we have
\begin{align*}
	I_3&\les  \Big(\Big\| \frac{\Ga^\al u}{\<\si\>^2}\Big\|_{L^2}+ \|\nab\Ga^\al u\|_{L^2}\Big)\|\P(\nab \Gamma^{\be}\phi\cdot\d_j\Gamma^{\ga}\phi)\|_{L^2}\\
	&\les \ep^2\<t\>^{-1/2}\mathscr D_\al(u) \les \ep \mathscr D_\al^2(u)+\ep^3 \<t\>^{-1}\,.
\end{align*}

As a consequence of the above estimates of $I_1,I_2,I_3$ and (\ref{MainAss_dini}), we have
\begin{equation*}
	\frac{d}{dt}\EE_\al(u,t)+\frac{1}{4}\mathscr D_\al^2(u,t)\leq \big(\frac{1}{8}+C\ep\big)\mathscr D_\al^2(u,t)+C\mathscr D_{|\al|-1}^2(u,t)+ \ep^3 \<t\>^{-1}\,. 
\end{equation*}
The first term in the right hand side can be absorbed by the left hand side, then the estimate \eqref{E-u} is obtained. This completes the proof of \eqref{E-u}.
\end{proof}

\medskip 
\subsection{The energy estimates for orientation fields \texorpdfstring{$\Ga^\al\phi$}{}} 

\begin{proof}[Proof of energy estimates \eqref{E-phi}]
	By the $\Ga^\al\phi$-equation in (\ref{Main_Sys_VecFie}) we calculate
		\begin{align*}
		\frac{d}{dt}\int_{\R^2} \frac{1}{2} e^{-q} |\nab_{t,x}\Ga^\al \phi|^2\ dx
		&=\int_{\R^2} e^{-q} \Big(\d_t \Ga^\al\phi \d^2_t \Ga^\al\phi +\nab\Ga^\al\phi \nab\d_t\Ga^\al\phi-\frac{1}{2(1+\si^2)}|\nab_{t,x}\Ga^\al\phi|^2\Big) \ dx \\
		&=\int_{\R^2} -\frac{e^{-q}}{2(1+\si^2)} \big(2\om_j\d_t\Ga^\al\phi\ \d_j\Ga^\al\phi+|\d_t\Ga^\al\phi|^2+|\nab\Ga^\al\phi|^2\big)+e^{-q} \d_t\Ga^\al\phi\cdot g_\al\ dx \\
		&=-\int_{\R^2} \frac{e^{-q}}{2(1+\si^2)} |\om\d_t\Ga^\al\phi+\nab\Ga^\al\phi|^2\ dx+ \int_{\R^2} e^{-q} \d_t\Ga^\al\phi\cdot g_\al\ dx\,.
	\end{align*}
In view of the formula \eqref{gal} for $g_\al$, this implies
\begin{align*}
	\frac{d}{dt}\EE_\al(\phi,t)+\mathscr D_\al^2(\phi,t)&= \int_{\R^2} e^{-q}\d_t\Ga^\al\phi\cdot g_\al\ dx\\
	&= -2\sum_{\be+\ga=\al}C_\al^\be \int_{\R^2} e^{-q}\d_t\Ga^\al\phi\cdot (\Ga^\be u\cdot\nab\d_t\Ga^\ga\phi)\ dx\\
	&\quad -\sum_{\be+\ga=\al}C_\al^\be \int_{\R^2}e^{-q}\d_t\Ga^\al\phi\cdot (\d_t \Ga^\be u\cdot\nab\Ga^\ga\phi)\ dx\\
	&\quad-\sum_{\be+\ga+\mu=\al}C_\al^{\be,\ga} \int_{\R^2}e^{-q}\d_t\Ga^\al\phi\cdot \big(\Ga^\be u\cdot\nab(\Ga^\ga u\cdot\nab\Ga^\mu\phi)\big)\ dx\\
	:&= J_1+J_2+J_3\,.
\end{align*}
It remains to bound the nonlinear terms $J_i$ for $i=1,\ 2,\ 3$.

\medskip 
\emph{Step 1. Estimate of the term $J_1$} 
\begin{align} \label{J1}
	J_1\les \ep \mathscr D_N^2(t)+\ep^3\<t\>^{-1}\,.
\end{align}

\medskip
\emph{Case i). $|\be|\geq [N/2]$.} 

Here we would utilize the ghost weight energy of ``$\Ga^\al u$" and the decay of $D^2 \Ga^\ga\phi$ to control $J_1$. Precisely, when $r$ stays away from the light cone, i.e. $r\leq \frac{2}{3}\<t\>$ or $r\geq \frac{5}{4}\<t\>$, by \eqref{AwayCone} and \eqref{XX_ka-2}, this term $J_1$ is bounded by 
\begin{align*}
	&\sum_{\be+\ga=\al;|\be|\geq [N/2]} \int_{r\notin[ \frac{2}{3}\<t\>,\frac{5}{4}\<t\>]} e^{-q}\d_t\Ga^\al\phi\cdot (\Ga^\be u\cdot\nab\d_t\Ga^\ga\phi)\ dx\\
	&\lesssim \|\d_t \Ga^N\phi\|_{L^2}\|\Ga^N u\|_{L^2} \<t\>^{-1}\|\<t\>\nab\d_t\Ga^{[N/2]}\phi\|_{L^\infty(r\notin[ \frac{2}{3}\<t\>,\frac{5}{4}\<t\>])}\\
	&\lesssim \<t\>^{-1}\EE_{N}\mathcal X^{1/2}_{[N/2]+3}\les \<t\>^{-1}\ep^3\,.
\end{align*}
When $r$ is near the light cone, i.e. $\frac{\<t\>}{3}\leq r\leq \frac{5}{2}\<t\>$, by \eqref{NearCone} and \eqref{XX_ka-2}, we have
\begin{align*}
	&\sum_{\be+\ga=\al;|\be|\geq [N/2]} \int_{\frac{\<t\>}{3}\leq r\leq \frac{5}{2}\<t\>} e^{-q}\d_t\Ga^\al\phi\cdot (\Ga^\be u\cdot\nab\d_t\Ga^\ga\phi)\ dx\\
	&\lesssim \int_{\frac{\<t\>}{3}\leq r\leq \frac{5}{2}\<t\>} \<t\>^{-1/2} e^{-q} \d_t\Ga^N\phi \ \frac{|\Ga^\be u|}{(1+\si^2)^{1/2}}\cdot r^{1/2}(1+\si^2)^{1/2}\nab\d_t\Ga^{[N/2]}\phi\ dx\\
	&\lesssim \<t\>^{-1/2} \|\d_t\Ga^N \phi\|_{L^2} \mathscr D_{|\al|}\|r^{1/2}\<t-r\>\nab\d_t\Ga^{[N/2]}\phi\|_{L^\infty(\frac{\<t\>}{3}\leq r\leq \frac{5}{2}\<t\>)}\\
	&\lesssim \<t\>^{-1/2} \EE^{1/2}_{N} \mathscr D_{|\al|}(\mathcal E^{1/2}_{[N/2]+2}+\mathcal X^{1/2}_{[N/2]+3})\\
	&\les \ep^2 \<t\>^{-1/2}\mathscr D_{|\al|}\les \ep \mathscr D_{|\al|}^2+\ep^3\<t\>^{-1}\,.
\end{align*}

\medskip 

\emph{Case ii). $|\be|<[N/2]$.} 

Here we would use $\div\  u=0$ and the structure of $\Ga^\be u\cdot \nab\d_t\Ga^\ga\phi$. Precisely, when $\be=0$, by integration by parts, divergence free of $ u$ and \eqref{rv}, we have
\begin{align*}
	&-\int_{\R^2} e^{-q}\d_t\Ga^\al\phi\cdot ( u\cdot\nab\d_t\Ga^\al\phi)\ dx=\int \frac{1}{2} e^{-q} \frac{\om_j u_j}{1+\si^2} |\d_t\Ga^\al\phi|^2\ dx \\
	& \lesssim \int_{r<\frac{2}{3}\<t\>} \<t\>^{-2} |u||\d_t\Ga^\al\phi|^2\ dx +\int_{r\geq \frac{2}{3}\<t\>} \frac{r}{\<t\>}|\om_j u_j||\d_t\Ga^\al\phi|^2\ dx\\
	&\lesssim (\<t\>^{-2}\|u\|_{L^\infty}+\<t\>^{-1}\|r\om\cdot u\|_{L^\infty})\|\d_t\Ga^\al\phi\|_{L^2}^2\\
	&\lesssim \<t\>^{-1}\EE^{1/2}_{N-2}\EE_N\les \ep^3 \<t\>^{-1}\,.
\end{align*}

When $1\leq |\be|<[N/2]$, we bound $J_1$ in the regions $r<\frac{2}{3}\<t\>$ and $r\geq \frac{2}{3}\<t\>$, respectively. Precisely, by \eqref{Decay-u} and \eqref{XX_ka}, we have
\begin{align*}
	&\sum_{\be+\ga=\al;1\leq |\be|<[N/2]}\int_{r<\frac{2}{3}\<t\>} e^{-q} \d_t\Ga^\al\phi \Ga^\be u\cdot\nab\d_t\Ga^\ga \phi\ dx\\
	&\lesssim \int_{r<\frac{2}{3}\<t\>} \<t\>^{-1}  |\d_t\Ga^N\phi ||\Ga^{[N/2]} u|\cdot | \<t-r\>\nab\d_t\Ga^{N-1} \phi|\ dx\\
	&\lesssim \<t\>^{-1} \EE^{1/2}_N\|\Ga^{[N/2]} u\|_{L^\infty} \XX^{1/2}_{N}\\
	&\lesssim \<t\>^{-3/2+\de}\EE^{1/2}_N\ep \mathcal X^{1/2}_{N}\\
	&\les \<t\>^{-3/2+\de}\ep^2(\ep+\ep \<t\>^{1/2} \mathscr D_{N})\\
	&\les \ep^3 \<t\>^{-3/2+\de}+\ep \mathscr D_N^2\,.
\end{align*}
For the region $r\geq \frac{2}{3}\<t\>$, we use the decomposition of $\nab$ to split the integral into
\begin{align*}
	\int_{r\geq \frac{2}{3}\<t\>} \d_t\Ga^\al\phi \Ga^\be u\cdot\nab\d_t\Ga^\ga \phi\ dx &\lesssim \int_{r\geq \frac{2}{3}\<t\>}   \d_t\Ga^\al\phi\  \Ga^\be u\cdot \big(\om\d_r+\frac{x^{\bot}}{r^2}\Om\big)\d_t\Ga^\ga \phi\ dx\,.
\end{align*}
By \eqref{omf} we have
\begin{align*}
	\sum_{\be+\ga=\al;1\leq |\be|<[N/2]}\int_{r\geq \frac{2}{3}\<t\>} \d_t\Ga^\al\phi\ \om\cdot \Ga^\be u\d_r\d_t\Ga^\ga \phi\ dx &\lesssim \int_{r\geq \frac{2}{3}\<t\>} \<t\>^{-1}  |\d_t\Ga^N\phi|\  |r \om\cdot\Ga^{[N/2]} u| |\d_r\d_t\Ga^{N-1} \phi|\ dx\\
	&\lesssim \<t\>^{-1} \EE_N\|r\om\cdot\Ga^{[N/2]} u\|_{L^\infty} \\
	&\lesssim \<t\>^{-1} \EE_{N}\EE^{1/2}_{[N/2]+2}\les \ep^3\<t\>^{-1} \,.
\end{align*}
And by \eqref{Decay-u} we easily have
\begin{align*}
	\sum_{\be+\ga=\al;1\leq |\be|<[N/2]}\int_{r\geq \frac{2}{3}\<t\>} \d_t\Ga^\al\phi\  \Ga^\be u\cdot\frac{x^\bot}{r^2}\Om\d_t\Ga^\ga \phi\ dx &\lesssim \int_{r\geq \frac{2}{3}\<t\>} \<t\>^{-1}  |\d_t\Ga^N\phi|\  |\Ga^{[N/2]} u|\  |\d_t\Om\Ga^{N-1} \phi|\ dx\\
	&\lesssim \<t\>^{-1} \EE_N\|\Ga^{[N/2]}u\|_{L^\infty}\\
	&\lesssim \<t\>^{-3/2+\de} \EE_{N}\ep\\
	&\les \ep^3\<t\>^{-3/2+\de} \,.
\end{align*}

Hence, by the above bounds we obtain the estimate \eqref{J1}

\bigskip

\emph{Step 2. Estimate of $J_2$}
\begin{align*}
	J_2+\frac{d}{dt}\int_{\R^2} e^{-q}\d_t\Ga^\al\phi\Ga^\al u\cdot\nab\phi\ dx+\frac{1}{2}\frac{d}{dt}\int_{\R^2} e^{-q} |\Ga^\al u\cdot \nab\phi|^2 \ dx\les \ep \mathscr D_N^2(t) +\ep^3\<t\>^{-1}\,.
\end{align*}

Here we would bound $J_2$ case by case. Due to the derivative loss, we should rewrite the $J_2$ with $\be=\al$. Precisely, by integration by parts and the $\Ga^\al \phi$-equations in \eqref{Main_Sys_VecFie}, we have
\begin{align*}
	&-\int_{\R^2}e^{-q}\d_t\Ga^\al\phi\cdot (\d_t \Ga^\al u\cdot\nab\phi)\ dx\\
	&= -\frac{d}{dt}\int_{\R^2} e^{-q}\d_t\Ga^\al\phi\Ga^\al u\cdot\nab\phi\ dx-\int_{\R^2} e^{-q}(1+\si^2)^{-1}\d_t\Ga^\al\phi\Ga^\al u\cdot\nab\phi\ dx\\
	&\quad +\int_{\R^2} e^{-q}\d^2_t\Ga^\al\phi\Ga^\al u\cdot\nab\phi\ dx+\int_{\R^2} e^{-q}\d_t\Ga^\al\phi\Ga^\al u\cdot\nab\d_t\phi\ dx\\
	&=-\frac{d}{dt}\int_{\R^2} e^{-q}\d_t\Ga^\al\phi\Ga^\al u\cdot\nab\phi\ dx-\int_{\R^2} e^{-q}(1+\si^2)^{-1}\d_t\Ga^\al\phi\Ga^\al u\cdot\nab\phi\ dx\\
	&\quad +\int_{\R^2} e^{-q}\d_t\Ga^\al\phi\Ga^\al u\cdot\nab\d_t\phi\ dx\\
	&\quad +\int_{\R^2} e^{-q} \De \Ga^\al \phi \Ga^\al u\cdot \nab\phi\ dx -\sum_{\be+\ga=\al}C_\al^\be\int_{\R^2} e^{-q}(\d_t \Ga^\be u\cdot\nab\Ga^\ga\phi)\Ga^\al u\cdot \nab\phi\ dx\\
	&\quad -2\sum_{\be+\ga=\al}C_\al^\be\int_{\R^2} e^{-q} (\Ga^\be u\cdot\nab\d_t\Ga^\ga\phi )\Ga^\al u\cdot\nab\phi\ dx\\
	&\quad-\sum_{\be+\ga+\mu=\al}C_\al^{\be,\ga}\int_{\R^2} e^{-q} \ \big(\Ga^\be u\cdot\nab(\Ga^\ga u\cdot\nab\Ga^\mu\phi)\big)\Ga^\al u\cdot \nab\phi\ dx\\
	&=J_{21}+\cdots+ J_{27}\,.
\end{align*}
We also record the $J_2$ with $|\be|\leq |\al|-1$ as 
\begin{align*}
	J_{28}=-\sum_{\be+\ga=\al;|\be|\leq |\al|-1}C_\al^\be \int_{\R^2}e^{-q}\d_t\Ga^\al\phi\cdot (\d_t \Ga^\be u\cdot\nab\Ga^\ga\phi)\ dx\,.
\end{align*}
The first term $J_{21}$ is retained, which can be moved to the left hand side. Next, we bound the other terms respectively.

\medskip 

\emph{Case i): Estimate of $J_{28}$.} 

By the decays \eqref{decay-dtu}, \eqref{dtu-L2al}, \eqref{AwayCone0} and \eqref{NearCone0}, we have 
\begin{align*}
	J_{28}
	\lesssim &\  \|\d_t\Ga^N\phi\|_{L^2}\|\d_t\Ga^1 u\|_{\Lf}\|\nab\Ga^{N}\phi\|_{L^2}+\|\d_t\Ga^N \phi\|_{L^2}\|\d_t\Ga^{[N/2]}u\|_{L^2}\|\nab\Ga^{N-2}\phi\|_{\Lf}\\
	&+\|\d_t\Ga^N \phi\|_{L^2}\|\d_t\Ga^{N-1}u\|_{L^2}\|\nab\Ga^{[N/2]}\phi\|_{\Lf}\\
	\les &\ \ep^3\<t\>^{-1}+\ep (\mathscr D_{[N/2]+1}+\ep\<t\>^{-1/2}) \<t\>^{-1/2}(\EE^{1/2}_{N-1}+\XX^{1/2}_{N})\\
	&  +\ep(\mathscr D_{N}+\ep\<t\>^{-1/2}) \<t\>^{-1/2}(\EE^{1/2}_{[N/2]+1}+\XX^{1/2}_{[N/2]+2})\,.
\end{align*} 
Then by \eqref{XX_ka}, we further bound this by 
\begin{align*}
	J_{28}&\les \ep^3\<t\>^{-1}+\ep(\mathscr D_N+\ep\<t\>^{-1/2})(\ep\<t\>^{-1/2}+\ep \mathscr D_N)+\ep^2\<t\>^{-1/2}(\mathscr D_N+\ep\<t\>^{-1/2})\\
	&\les \ep \mathscr D^2_N +\ep^3\<t\>^{-1}\,.
\end{align*}

\medskip 

\emph{Case ii): Estimates of $J_{22}$ and $J_{23}$.}

For the second term $J_{22}$, by the ghost weight energy of $``\Ga^\al u"$, \eqref{AwayCone0} and \eqref{NearCone0}, we have
\begin{align*}
	J_{22}&\lesssim \int_{\R^2} e^{-q}|\d_t\Ga^\al \phi|\ \Big|\frac{\Ga^\al u}{(1+\si^2)^{1/2}}\Big||\nab\phi|\ dx\\
	&\lesssim \|\d_t\Ga^N \phi\|_{L^2} \mathscr D_{\al}(u)\|\nab\phi\|_{L^\infty}\\
	&\lesssim \ep^2 \<t\>^{-1/2}\mathscr D_{\al}(u)\les \ep \mathscr D_\al^2(u) +\ep^3\<t\>^{-1}\,.
\end{align*}
The last term $II_{23}$ has been estimated in the \emph{Step 1. Case i)} above. Namely, it can also be bounded by
\begin{equation*}
	J_{23}\les \ep \mathscr D_N^2 +\ep^3\<t\>^{-1}\,.
\end{equation*}

\medskip 

\emph{Case iii). Estimates of $J_{24}$.}

Integration by parts yields
\begin{align*}
	J_{24}&=-\int_{\R^2} e^{-q} \nab\Ga^\al\phi \nab\Ga^\al u\cdot\nab\phi\ dx-\int_{\R^2} e^{-q}\nab\Ga^\al\phi\ \Ga^\al u\cdot\nab\nab\phi\ dx\\
	&\quad -\int_{\R^2} e^{-q} \d_j\Ga^\al\phi \frac{\om_j \Ga^\al u}{1+\si^2}\cdot \nab\phi\ dx \,. 
\end{align*}
The second term can be estimated similar to \emph{Step 1. Case i)}. For the other two terms, by \eqref{AwayCone0} and \eqref{NearCone0} we have
\begin{align*}
	&-\int_{\R^2} e^{-q} \nab\Ga^\al\phi \nab\Ga^\al u\cdot\nab\phi\ dx -\int_{\R^2} e^{-q} \d_j\Ga^\al\phi \frac{\om_j \Ga^\al u}{1+\si^2}\cdot \nab\phi\ dx\\
	&\lesssim \|\nab\Ga^\al\phi\|_{L^2}\mathscr D_{\al}(u) \|\nab\phi\|_{L^\infty} 
	\lesssim \ep^2 \<t\>^{-1/2} \mathscr D_{\al}(u)\les \ep \mathscr D_\al^2(u) +\ep^3\<t\>^{-1}\,. 
\end{align*}

\medskip 

\emph{Case iv). Estimates of $J_{25}$.} 

For the case $\be=\al$, by integration by parts we have
\begin{align*}
	&-\int_{\R^2} e^{-q}(\d_t\Ga^\al u\cdot \nab\phi) \Ga^\al u\cdot\nab\phi\ dx\\
	&=-\int_{\R^2} e^{-q} \d_t(\Ga^\al u\cdot \nab\phi) \Ga^\al u\cdot\nab\phi\ dx+\int_{\R^2} e^{-q} (\Ga^\al u\cdot \nab\d_t\phi) \Ga^\al u\cdot\nab\phi\ dx\\
	&=-\frac{1}{2}\frac{d}{dt}\int_{\R^2} e^{-q} |\Ga^\al u\cdot \nab\phi|^2 \ dx-\int_{\R^2} \frac{1}{2(1+\si^2)} e^{-q} |\Ga^\al u\cdot \nab\phi|^2 \ dx+\int_{\R^2} e^{-q} (\Ga^\al u\cdot \nab\d_t\phi) \Ga^\al u\cdot\nab\phi\ dx\,.
\end{align*}
The first term is retained, which will be moved to the left hand side.
Then we use \eqref{AwayCone0} and \eqref{NearCone0} to bound the other two terms by
\begin{align*}
	&-\int_{\R^2} \frac{1}{2(1+\si^2)} e^{-q} |\Ga^\al u\cdot \nab\phi|^2 \ dx+\int_{\R^2} e^{-q} (\Ga^\al u\cdot \nab\d_t\phi) \Ga^\al u\cdot\nab\phi\ dx\\
	&\lesssim \mathscr D_\al^2(u) \|\nab\phi\|_{\Lf}^2
	+ \|\Ga^\al u\|_{L^2}^2 \|\mathbf{1}_{r\notin[\frac{2}{3}\<s\>,\frac{5}{4}\<s\>]}(r) \nab\d_t\phi \nab\phi\|_{\Lf}\\
	&\quad +\mathscr D_{\al}(u)\EE_N^{1/2}\|\mathbf{1}_{r\in[\frac{2}{3}\<t\>,\frac{5}{4}\<t\>]}(r) \<t-r\>\nab\d_t\phi\nab\phi\|_{\Lf}\\
	&\lesssim \ep^2 \<t\>^{-1}\mathscr D_\al^2(u)+\ep^4\<t\>^{-2+2\de}+\mathscr D_\al(u) \ep^3 \<t\>^{-1}\\
	&\lesssim \ep^2 \mathscr D_\al^2(u)+\ep^4\<t\>^{-2+2\de}\,.
\end{align*}
For the case $|\be|\leq  |\al|-1$, by \eqref{decay-dtu}, \eqref{dtu-L2al}, Lemma \ref{Decay-phi0} and \eqref{XX_ka}, we have
\begin{align*}
	&-\sum_{\be+\ga=\al;|\be|\leq |\al|-1}C_\al^\be\int_{\R^2} e^{-q}(\d_t \Ga^\be u\cdot\nab\Ga^\ga\phi)\Ga^\al u\cdot \nab\phi\ dx\\
	&\lesssim  \|\d_t\Ga^1 u\|_{\Lf}\|\nab\Ga^N\phi\|_{L^2}\|\Ga^N u\|_{L^2}\|\nab\phi\|_{\Lf} \\
	&\quad + \|\d_t\Ga^{N-1} u\|_{L^2}\|\nab\Ga^{N-2}\phi\|_{\Lf}\|\Ga^N u\|_{L^2}\|\nab\phi\|_{\Lf} \\
	&\lesssim \ep^4 \<t\>^{-3/2}+(\mathscr D_{N}+ \<t\>^{-1/2}\EE^{1/2}_{N})\<t\>^{-1}(\EE^{1/2}_N+\XX^{1/2}_N)\EE_N\\
	&\lesssim \ep^4 \<t\>^{-3/2}+(\mathscr D_{N}+ \ep\<t\>^{-1/2})\ep^2\<t\>^{-1}(\ep+\ep\<t\>^{1/2}\mathscr D_N)\\
	&\lesssim \ep^4 \<t\>^{-3/2}+\ep^3\<t\>^{-1}\mathscr D_N+\ep^3\<t\>^{-1/2}\mathscr D_N^2\\
	&\les \ep \mathscr D_N^2+\ep^4 \<t\>^{-3/2}\,.
\end{align*}
Thus we also obtain
\begin{equation*}
	J_{25}+\frac{1}{2}\frac{d}{dt}\int_{\R^2} e^{-q} |\Ga^\al u\cdot \nab\phi|^2 \ dx\lesssim  \ep \mathscr D_N^2+\ep^4 \<t\>^{-3/2}\,.
\end{equation*}

\medskip 

\emph{Case v). Estimates of $J_{26}$.}

Due to the derivative loss, we rewrite the term $J_{26}$ as 
\begin{align*}
	J_{26}&=-2\sum_{\be+\ga=\al;|\ga|<|\al|}C_\al^\be\int_{\R^2} e^{-q} (\Ga^\be u\cdot\nab\d_t\Ga^\ga\phi )\Ga^\al u\cdot\nab\phi\ dx-2\int_{\R^2} e^{-q} ( u\cdot\nab\d_t\Ga^\al\phi )\Ga^\al u\cdot\nab\phi dx\\
	&= J_{26}^1+J_{26}^2\,.
\end{align*}

For the first term $J_{26}^1$, by \eqref{AwayCone0} and \eqref{NearCone0}, we have
\begin{align*}
	J_{26}^1&\lesssim \sum_{\be+\ga=\al;|\ga|\leq [N/2]}\|\Ga^\be u\|_{L^2}\|\Ga^\al u\|_{L^2} \|\mathbf{1}_{r\notin [\frac{2}{3}\<t\>,\frac{5}{4}\<t\>]}\nab\d_t\Ga^\ga\phi\ \nab\phi\|_{\Lf}\\
	&\quad +\sum_{\be+\ga=\al;|\ga|\leq [N/2]}\Big\|\frac{\Ga^\be u}{(1+\si^2)^{1/2}}\Big\|_{L^2}\|\Ga^\al u\|_{L^2} \|\mathbf{1}_{r\in [\frac{2}{3}\<t\>,\frac{5}{4}\<t\>]}(1+\si^2)^{1/2}\nab\d_t\Ga^\ga\phi\ \nab\phi\|_{\Lf}\\
	&\quad+\sum_{\be+\ga=\al; [N/2]<|\ga|<|\al|}\|\Ga^\be u\|_{\Lf}  \|\nab\d_t\Ga^\ga\phi\|_{L^2} \|\Ga^\al u\|_{L^2}\|\nab\phi\|_{\Lf(r\notin[2\<t\>/3,5\<t\>/4])}\\
	&\quad+\sum_{\be+\ga=\al;[N/2]<|\ga|<|\al|}\|\Ga^\be u\|_{\Lf}\Big\|\frac{\Ga^\al u}{(1+\si^2)^{1/2}}\Big\|_{L^2}\|\mathbf{1}_{(r\in[2\<t\>/3,5\<t\>/4])}(1+\si^2)^{1/2}\nab\d_t\Ga^\ga\phi\|_{L^2}\| \nab\phi\|_{\Lf}\,.
\end{align*}
Then by \eqref{AwayCone0}, \eqref{NearCone0}, \eqref{Decay-u}, \eqref{XX} and \eqref{XX_ka}, we bound the above terms by 
\begin{align*}
	J_{26}^1&\lesssim \EE_N^2\<t\>^{-2+\de}+\mathscr D_{N}\EE_N^{1/2} \<t\>^{-1} \EE_N+\ep\<t\>^{-1/2+\de}\ep^2 \ep\<t\>^{-1+\de}+\ep\<t\>^{-1/2+\de}\mathscr D_N\XX^{1/2}_{N}\ep\<t\>^{-1/2}\\
	&\lesssim \ep^4\<t\>^{-2+\de}+\ep^3\<t\>^{-1}\mathscr D_N+\ep^4\<t\>^{-3/2+2\de}+\ep^2\<t\>^{-1+\de}\mathscr D_N(\ep+\ep\<t\>^{1/2}\mathscr D_N)\\
	&\les \ep^4\<t\>^{-3/2+2\de}+\ep^2 \mathscr D_N^2\,.
\end{align*}

For the term $J_{26}^2$, by integration by parts in spatial $x$, we rewrite it as 
\begin{align*}
	J_{26}^2=&\ 2\int_{\R^2} e^{-q} \frac{\om_j}{1+\si^2} ( u_j\cdot\d_t\Ga^\al\phi )\Ga^\al u\cdot\nab\phi dx+2\int_{\R^2} e^{-q}  ( u\cdot\d_t\Ga^\al\phi )\nab \Ga^\al u\cdot\nab\phi dx\\
	& +2\int_{\R^2} e^{-q}  ( u\ \d_t\Ga^\al\phi )\Ga^\al u\cdot\nab \nab\phi dx\,.
\end{align*}
Then we use \eqref{omf}, \eqref{Decay-u}, \eqref{AwayCone0}, \eqref{NearCone0} to bound this by
\begin{align*}
	J_{26}^2\lesssim&\ \Big\|\frac{\om\cdot u}{1+\si^2}\Big\|_{\Lf}\|\d_t\Ga^\al\phi\|_{L^2}\|\Ga^\al u\|_{L^2}\|\nab\phi\|_{\Lf}+\|u\|_{\Lf}\|\d_t\Ga^\al\phi\|_{L^2}\|\nab\Ga^\al u\|_{L^2}\|\nab\phi\|_{\Lf}\\
	& +\|u\|_{\Lf}\|\d_t\Ga^\al\phi\|_{L^2}\Big\|\frac{\Ga^\al u}{\<\si\>}\Big\|_{L^2}\<t\>^{-1/2}\|\mathbf{1}_{r\in[\frac{2}{3}\<t\>,\frac{5}{4}\<t\>]}r^{1/2}\<\si\>\nab^2\phi\|_{\Lf}\\
	& +\|u\|_{\Lf}\|\d_t\Ga^\al\phi\|_{L^2}\|\Ga^\al u\|_{L^2}\<t\>^{-1}\|\mathbf{1}_{r\notin[\frac{2}{3}\<t\>,\frac{5}{4}\<t\>]}t\nab^2\phi\|_{\Lf}\\
	\lesssim &\ \ep \<t\>^{-1}\ep^2 \ep\<t\>^{-1/2}+\ep\<t\>^{-1/2}\ep\mathscr D_{\al}\ep\<t\>^{-1/2}+\ep\<t\>^{-1/2}\ep\mathscr D_{\al} \ep\<t\>^{-1/2}+\ep\<t\>^{-1/2}\ep^2\ep\<t\>^{-1}\\
	\les &\ \ep^3\<t\>^{-1}\mathscr D_N+\ep^4 \<t\>^{-3/2}\\
	\les&\  \ep^2 \mathscr D_N^2+\ep^4 \<t\>^{-3/2}\,.
\end{align*}
Hence, the $J_{26}$ is bounded by 
\begin{equation*}
	J_{26}\les \ep^2 \mathscr D_N^2+\ep^4 \<t\>^{-3/2+2\de}\,.
\end{equation*}

\medskip 

\emph{Case vi). Estimates of $J_{27}$.}

When $\mu=\al$, by integration by parts, \eqref{Decay-u} and \eqref{AwayCone0} and \eqref{NearCone0}, we have
\begin{align*}
	&\int_{\R^2} e^{-q}u\cdot\nab(u\cdot \nab\Ga^\al\phi)\Ga^\al u\cdot\nab\phi\ dx\\
	&=\int_{\R^2} -e^{-q}\frac{\om_j}{1+\si^2} u_j(u\cdot\nab\Ga^\al\phi)\Ga^\al u\cdot\nab\phi+e^{-q} u_j(u\cdot\nab\Ga^\al\phi)\d_j(\Ga^\al u\cdot\nab\phi)\ dx\\
	&\lesssim \|u\|_{\Lf}^2\|\nab\Ga^\al\phi\|_{L^2}\|\<\nab\>\Ga^\al u\|_{L^2}\|\<\nab\>\nab\phi\|_{\Lf}\\
	&\lesssim \ep^4\<t\>^{-3/2}(\ep+\|\nab\Ga^\al u\|_{L^2})\\
	&\les \ep^5\<t\>^{-3/2}+\ep^2\mathscr D_N^2\,.
\end{align*}
When $\ga=\al$, we have
\begin{align*}
	&\int_{\R^2} e^{-q} \ (u\cdot\nab(\Ga^\al u\cdot\nab\phi))\Ga^\al u\cdot \nab\phi\ dx\\
	&=-\int_{\R^2} e^{-q} \ \frac{1}{2\<\si\>^2}\om\cdot u|\Ga^\al u\cdot\nab\phi|^2\ dx\\
	&\les \|u\|_{\Lf}\|\nab\phi\|_{\Lf}^2\|\Ga^\al u\|_{L^2}^2\\
	&\les \ep^5\<t\>^{-3/2}\,.
\end{align*}
For the remainder cases $|\ga|,|\mu|\leq |\al|-1$, we have
\begin{align*}
	&\sum_{\be+\ga+\de=\al;|\de|\leq |\al|-1}\int_{\R^2} e^{-q} \ (\Ga^\be u\cdot\nab(\Ga^\ga u\cdot\nab\Ga^\de\phi))\Ga^\al u\cdot \nab\phi\ dx\\
	&\lesssim  \|\Ga^{[N/2]} u\|_{\Lf}\|\Ga^{[N/2]+1} u\|_{\Lf} \|\nab\Ga^{N}\phi\|_{L^2}\|\Ga^N u\|_{L^2}\|\nab\phi\|_{\Lf}\\
	&\quad +\|\Ga^{[N/2]} u\|_{\Lf}\|\Ga^N u\|_{L^2} \|\nab\Ga^{[N/2]+1}\phi\|_{L^\infty}\|\Ga^N u\|_{L^2}\|\nab\phi\|_{\Lf}\\
	&\lesssim \ep^5 \<t\>^{-3/2+2\de}\,.
\end{align*}
Thus we obtain
\begin{align*}
	J_{27} \lesssim \ep^5\<t\>^{-3/2+2\de}+\ep^2\mathscr D_N^2\,.
\end{align*}

\bigskip
\emph{Step 3. We aim to show that}
\begin{align*}
	J_3+\frac{1}{2}\frac{d}{dt}\int_{\R^2} e^{-q}|u\cdot\nab\Ga^\al\phi|^2\ dx\les \ep^2\mathscr D_N^2+\ep^4\<t\>^{-3/2+\de}\,.
\end{align*}

\medskip 
\emph{Case i). Estimates of $J_3$ with $\mu=\al$.}

When $\mu=\al$, by integration by parts we have
\begin{align*}
	&\int_{\R^2}e^{-q}\d_t\Ga^\al\phi\cdot ( u\cdot\nab( u\cdot\nab\Ga^\al\phi))\ dx\\
	=&\ \int_{\R^2} e^{-q} \frac{\om_j}{1+\si^2} u_j \d_t\Ga^\al\phi (u\cdot\nab\Ga^\al\phi)
	+e^{-q}\d_t u\cdot\nab\Ga^\al\phi (u\cdot\nab\Ga^\al\phi)-e^{-q}\frac{|u\cdot\nab\Ga^\al\phi|^2}{1+\si^2} \ dx\\
	&\ -\frac{1}{2}\frac{d}{dt}\int_{\R^2} e^{-q}|u\cdot\nab\Ga^\al\phi|^2\ dx\\
	=&\ J_{31}+J_{32}\,.
\end{align*}
The term $J_{32}$ can be moved to the left hand side, so it suffices to bound $J_{31}$.

Due to Sobolev embedding,
\begin{align*}
	\Big\|\frac{u}{(1+\si^2)^{1/2}}\Big\|_{\Lf}\les \Big\|\frac{u}{(1+\si^2)^{1/2}}\Big\|_{H^2}\lesssim \Big\|\frac{ u}{(1+\si^2)^{1/2}}\Big\|_{L^2}+\|\nab u\|_{H^1}\,.
\end{align*}
Then the first term $J_{31}$ is controlled by 
\begin{align*}
	J_{31}&\lesssim \Big(\Big\|\frac{u}{(1+\si^2)^{1/2}}\Big\|_{L^2}+\|\nab u\|_{H^1}\Big)^2 \|D\Ga^\al\phi\|_{L^2}^2+\|\d_t u\|_{\Lf}\|u\|_{\Lf}\|\nab\Ga^\al\phi\|_{L^2}^2\\
	&\lesssim \ep^2\mathscr D_N^2+\ep^4\<t\>^{-3/2}\,.
\end{align*}

\medskip 

\emph{Case ii). Estimates of $J_3$ with $|\mu|<|\al|$.}

We rewrite the $J_3$ as 
\begin{align*}
	&\sum_{\be+\ga+\mu=\al;|\mu|< |\al|} \int_{\R^2}e^{-q}\d_t\Ga^\al\phi\cdot (\Ga^\be u\cdot\nab(\Ga^\ga u\cdot\nab\Ga^\mu\phi))\ dx\\
	=&\ \sum_{\be+\ga+\mu=\al;|\mu|< |\al|}\int_{\R^2}e^{-q}\d_t\Ga^\al\phi\cdot (\Ga^\be u\cdot\Ga^\ga u\cdot\nab\nab\Ga^\mu\phi))\ dx\\
	&+\sum_{\be+\ga+\mu=\al;|\mu|< |\al|}\int_{\R^2}e^{-q}\d_t\Ga^\al\phi\cdot (\Ga^\be u\cdot \nab\Ga^\ga u\cdot\nab\Ga^\mu\phi))\ dx\\
	=&\ J_{33}+J_{34}\,.
\end{align*}

For the first term $J_{33}$, we can assume that $|\be|\leq |\ga|$. Then by the estimates in \emph{Step 1. Case i) and Case ii)} and \eqref{Decay-u}, we have
\begin{align*}
	J_{33}&\lesssim \|\Ga^{[N/2]} u\|_{\Lf}(\ep\mathscr D_N^2+\ep^3\<t\>^{-1})\les \ep^2\mathscr D_N^2+\ep^4\<t\>^{-3/2+\de}\,.
\end{align*}
For the second term $J_{34}$, when $|\be|,|\mu|\leq [N/2]$, by \eqref{Decay-u} and Lemma \ref{Decay-phi0}, we have
\begin{align*}
	\int_{\R^2}e^{-q}\d_t\Ga^N\phi\cdot (\Ga^{[N/2]} u\cdot \nab\Ga^N u\cdot\nab\Ga^{[N/2]}\phi))\ dx &\les \|\d_t\Ga^N\phi\|_{L^2}\|\Ga^{[N/2]}u\|_{\Lf}\|\nab\Ga^N u\|_{L^2}\|\nab\Ga^{[N/2]}\phi\|_{\Lf}\\
	&\lesssim \ep^3\<t\>^{-1+\de}\mathscr D_N\les \ep^2\mathscr D_N^2+\ep^4\<t\>^{-2+2\de}\,.
\end{align*}
When $|\ga|,|\mu|\leq [N/2]$, by \eqref{decay-du0}, Lemma \ref{Decay-phi0}, \eqref{Decay-u} and \eqref{XX_ka}, we have
\begin{align*}
	&\int_{\R^2}e^{-q}\d_t\Ga^N\phi\cdot (\Ga^{N} u\cdot \nab\Ga^{[N/2]} u\cdot\nab\Ga^{[N/2]}\phi))\ dx\\
	&\lesssim \ep\big(\|\Ga^{N} u\|_{L^2}\|\nab\Ga^{\leq 1} u\|_{\Lf}\|\nab\Ga^{\leq 1}\phi\|_{\Lf}
	+\|\Ga^{N-2}u\|_{\Lf}\|\nab\Ga^{[N/2]} u\|_{L^2}\|\nab\Ga^{[N/2]}\phi\|_{\Lf}\big)\\
	&\lesssim \ep \Big( \ep^2\<t\>^{-1+\de}\ep\<t\>^{-1/2}+\<t\>^{-1/2+\de}(\ep+\ep\<t\>^{1/2}\mathscr D_N)\mathscr D_N\ep\<t\>^{-1/2})\\
	&\les \ep^4 \<t\>^{-3/2+\de}+\ep^3\<t\>^{-1+\de}\mathscr D_N+\ep^3\<t\>^{-1/2+\de}\mathscr D_N^2\\
	&\les \ep^4 \<t\>^{-3/2+\de}+\ep^2\mathscr D_N^2\,.
\end{align*}
When $|\be|,|\ga|\leq [N/2]$, by \eqref{Decay-u}, \eqref{decay-du0}, Lemma \ref{Decay-phi0} and \eqref{XX_ka}, we have
\begin{align*}
	&\int_{\R^2}e^{-q}\d_t\Ga^N\phi\cdot (\Ga^{[N/2]} u\cdot \nab\Ga^{[N/2]} u\cdot\nab\Ga^{N}\phi))\ dx\\
	\lesssim&\  \ep\|\Ga^{\leq 1} u\|_{\Lf}\|\nab\Ga^{\leq 1}u\|_{\Lf}\|\nab\Ga^{|\al|-1}\phi\|_{L^2}\\
	&+\ep\|\Ga^{[N/2]} u\|_{\Lf}\|\nab\Ga^{[N/2]}u\|_{L^2}\|\nab\Ga^{ N-2}\phi\|_{\Lf}\\
	\lesssim&\  \ep^4\<t\>^{-3/2+\de}+\ep^2\<t\>^{-1/2+\de}\mathscr D_N \<t\>^{-1/2}(\ep+\<t\>^{1/2}\mathscr D_N)\\
	\les&\  \ep^4\<t\>^{-3/2+\de}+\ep^2\mathscr D_N^2\,.
\end{align*}
Thus we also obtain
\begin{align*}
	J_{34} \lesssim \ep^4\<t\>^{-3/2+\de}+\ep^2\mathscr D_N^2\,.
\end{align*}

\bigskip 

In conclusion, by the above three steps, the desired high order energy estimate \eqref{E-phi} is obtained. We complete the proof of the proposition.
\end{proof}

\bigskip 
\section{Proof of Proposition \ref{Main_Prop}}\label{sec-proof}
In this section, we prove the bootstrap Proposition \ref{Main_Prop} by the energy estimates in Proposition \ref{Prop_Eesti} and the $L^2$ weighted estimates in Proposition \ref{propXX}. This closes our proof of Theorem \ref{Main_thm}.

\begin{proof}[Proof of of Proposition \ref{Main_Prop}]
	\ 
	
	\emph{Step 1. We prove that for any $j\leq N$}
	\begin{align}   \label{E-uint}
		\EE_j(u,t)+\int_0^t\mathscr D^2_j(u,s)ds\leq C_{3j} \EE_j(u,0)+ C_{3j} \ep^3\ln(e+t)\,.
	\end{align}
	
	We recall the energy estimates \eqref{E-u} in Proposition \ref{Prop_Eesti}
	\begin{align*}
		\frac{d}{dt}\EE_\al(u,t)+\frac{1}{8}\mathscr D_\al^2(u,t)\leq C_1 \mathscr D_{|\al|-1}^2(u,t)+ C_1\ep^3 \<t\>^{-1}\,.
	\end{align*}
Integrating in time yields
\begin{align}       \label{E-ualint}
	\EE_\al(u,t)+\frac{1}{8}\int_0^t\mathscr D_\al^2(u,s)ds\leq \EE_\al(u,0)+ C_1 \int_0^t\mathscr D_{|\al|-1}^2(u,s)ds+ C_1\ep^3 \ln(e+t)\,.
\end{align}
    This estimate with  $|\al|=0$ implies that the bound \eqref{E-uint} with $j=0$ holds. Then we prove \eqref{E-uint} by induction on $j$. Precisely, we assume that the bound \eqref{E-uint} for $j=n-1$ holds, i.e.
    \begin{align} \label{j-1ass}
    	\EE_{n-1}(u,t)+\int_0^t\mathscr D^2_{n-1}(u,s)ds\leq C_{3,n-1}\EE_{n-1}(u,0)+ C_{3,n-1}\ep^3\<t\>^{-1}\,.
    \end{align}
    Multiplying \eqref{j-1ass} with $2C_1$, and add \eqref{E-ualint} with $|\al|=n-1$, we obtain the bound \eqref{E-uint} for $j=n$. Hence, the bound \eqref{E-uint} is obtained.

\medskip 
    \emph{Step 2. We prove that}
    \begin{align}   \label{E-int}
    	E_N(t) \leq C_4 E_N(0)+C_4\ep^3\ln(e+t)\,.
    \end{align}
    
    We recall the energy estimates \eqref{E-phi}
	\begin{align*}
		\frac{d}{dt}(\EE_\al(\phi,t)+\tilde \EE_\al(\phi,t))+\mathscr D_\al^2(\phi,t)\leq C_2 \ep \mathscr D^2_{N}(t)+C_2\ep^3 \<t\>^{-1}\,.
	\end{align*}
Integration in time and summation over $|\al|\leq N$ yield
\begin{align*}
	\EE_N(\phi,t)+\tilde \EE_N(\phi,t)+\int_0^t \mathscr D_N^2(\phi,s)ds\leq \EE_N(\phi,0)+\tilde \EE_N(\phi,0)+C_2 \ep \int_0^t \mathscr D_N^2(s)ds +C_2 \ep^3 \ln(e+t).
\end{align*}
This bound and \eqref{E-uint} with $j=N$ imply
\begin{align*}
	&\EE_N(u,t)+\EE_N(\phi,t)+\tilde \EE_N(\phi,t)+\int_0^t \big( \mathscr D_N^2(u,s)+ \mathscr D_N^2(\phi,s)\big)\ ds\\
	&\leq \EE_N(u,0)+ \EE_N(\phi,0)+\tilde \EE_N(\phi,0)+C_2 
	\ep \int_0^t \mathscr D_N^2(s)ds +C_2 \ep^3 \ln(e+t)
\end{align*}
Since $\ep$ is sufficiently small, the fourth term in the right hand side can be absorbed by the left hand side. Then \eqref{E-int} is a consequence of \eqref{eq}.

\medskip 
\emph{Step 3. We show the improve bound \eqref{Main_Prop_result1} for energy $E_N$.}

By initial data assumption \eqref{MainAss_dini} and \eqref{E-int}, we have
\begin{align*}
	E_N(t)\leq C_4 \ep^2+C_4\ep^3\ln(e+t)
\end{align*}
Then choosing 
\[C_0\geq 2\sqrt{2C_4}\,,\quad C_T=\frac{C_0^2}{8C_4}\,,\]
on the interval $[0,T]$ for $T\leq e^{C_T/\ep}$, we have
\begin{align*}
	E_N(t)\leq (\frac{C_0}{2}\ep)^2\,.
\end{align*}

We recall the estimate \eqref{XX_ka-2}
\begin{align*}
	\XX_{N-2}^{1/2}\leq C_5 E_{N-2}^{1/2}+C_5 \ep^2\,.
\end{align*}
Then choosing 
\begin{align*}
	C_0\geq \max\{2\sqrt{2C_4},4C_5\}
\end{align*}
and $\ep$ sufficiently small, we have
\begin{align*}
	\XX^{1/2}_{N-2}\leq \frac{C_0^2}{4}\ep+\frac{C_0}{4}\ep^2 \leq \frac{C_0^2}{2}\ep\,.
\end{align*}

Thus we obtain the bound \eqref{Main_Prop_result1}. This completes the proof of Proposition \ref{Main_Prop}.
\end{proof}

\bigskip 
\section*{Acknowledgment}
J. Huang was supported by Beijing Institute of Technology Research Fund Program for Young Scholars, as well as the NSFC Grant No. 12271497. The author N. Jiang is supported by the grants from the National Natural Foundation of China under contract Nos. 11971360 and 11731008, and also supported by the Strategic Priority Research Program of Chinese Academy of Sciences, Grant No. XDA25010404. L. Zhao is supported by NSFC Grant of China No. 12271497 and the National Key Research and Development Program of China No. 2020YFA0713100.

\bigskip

\end{document}